\tikzstyle{bsq}=[rectangle, draw, thick, minimum width=.5cm, minimum height=.5cm]
\tikzstyle{bver}=[rectangle, draw, thick, minimum width=1cm, minimum height=2cm]
\tikzstyle{bhor}=[rectangle, draw, thick, minimum width=2cm, minimum height=1cm]
\newtheorem{theorem}{Theorem}[section]
\newtheorem{lemma}[theorem]{Lemma}
\newtheorem{corollary}[theorem]{Corollary}
\newtheorem{proposition}[theorem]{Proposition}
\newtheorem{varexample}[theorem]{Example}
\theoremstyle{definition}
\def\Z{{\mathbb Z}}
\def\Q{{\mathbb Q}}
\newcommand{\cA}{\mathcal{A}}
\newcommand{\cB}{\mathcal{B}}
\newcommand{\Jac}{\operatorname{Jac}}
\newcommand{\val}{\operatorname{val}}
\newcommand{\Pic}{\operatorname{Pic}}
\newcommand{\gon}{\operatorname{gon}}
\begin{document}
\title{Fibonacci Sumsets and the Gonality of Strip Graphs}
\author[D. Jensen]{David Jensen}
\email{{\tt dave.jensen@uky.edu}}
\author[D. Rivera Laboy]{Doel Rivera Laboy}
\email{{\tt doel.riveralaboy@uky.edu}}
\date{}
\bibliographystyle{alpha}

\begin{abstract}
We provide a new perspective on the divisor theory of graphs, using additive combinatorics.  As a test case for this perspective, we compute the gonality of certain families of outerplanar graphs, specifically the strip graphs.  The Jacobians of such graphs are always cyclic of Fibonacci order.  As a consequence, we obtain several results on the additive properties of Fibonacci numbers.
\end{abstract}

\maketitle

\section{Introduction}
\label{Sec:Intro}

\subsection{Divisors and Gonality}
\label{Sec:Divisors}
The divisor theory of graphs was first introduced in \cite{BakerNorine07, Baker08}.  This theory, which mirrors and informs that of divisors on algebraic curves, has attracted a great deal of interest in the intervening 17 years.  In \cite{Baker08}, Baker defines a new graph invariant, the \emph{gonality}, which is the minimum degree of a divisor of positive rank.  More generally, once can define the $r$-\emph{gonality}, the minimum degree of a divisor of rank $r$.  Computing the gonality of a graph is NP-hard \cite{GSvdW}, and there is a wealth of literature on bounding this invariant \cite{deBruynGijswijt, HJJS,vDdBSvdW, DEM21}.

In this paper, we recast the gonality problem in the language of additive combinatorics.  The group of equivalence classes of degree-0 divisors on a graph $G$ is a finite abelian group, known variously as the \emph{Jacobian} $\Jac(G)$, the \emph{sandpile group}, or the \emph{critical group}.  This group contains a subset $\cA(G) \subset \Jac(G)$, whose elements are in bijection with equivalence classes of vertices in $G$.  Additive combinatorics is largely concerned with the additive structure of subsets of abelian groups, known as \emph{additive sets}.  Of particular interest are the \emph{iterated sumsets} $m\cA(G)$, consisting of all sums of $m$ elements of $\cA(G)$.  In Corollary~\ref{Cor:Gonality}, we provide an equivalent characterization of the $r$-gonality in terms of the iterated sumsets of $\cA(G)$.

\subsection{Outerplanar Graphs}
\label{Sec:IntroOuterplanar}

As a test case for this perspective, we consider the gonality of \emph{outerplanar graphs}, which are graphs that can be embedded in the plane so that all vertices belong to a single face.  More specifically, we focus on two families of outerplanar graphs, the \emph{fan graphs} $\mathcal{F}_n$ (see Figure~\ref{Fig:Fan}) and the \emph{strip graphs} $G_n$ (see Figure~\ref{Fig:Strip}).

These families of graphs form a good test case for two reasons.  First, the lower bounds on gonality that can be found in the existing literature are insufficient for computing the gonality of these graphs.  For example, it is shown in \cite{deBruynGijswijt} that the gonality is bounded below by a much-studied graph invariant known as the \emph{treewidth}.  In Lemma~\ref{Lem:Treewidth}, however, we show that the treewidth of an outerplanar graph is at most two, which is as small as possible.  Similarly, in \cite{HJJS}, it is shown that the gonality is bounded below by the so-called \emph{scramble number}, but in Lemma~\ref{Lem:Scramble}, we show that the scramble number of the strip graph $G_n$ is three.

Second, the Jacobian of an outerplanar graph is relatively easy to understand.  More specifically, if $G$ is a maximal outerplanar graph with exactly two vertices of valence two, then $\Jac(G)$ is cyclic of Fibonacci order (see Corollary~\ref{Cor:Iso}).  Throughout, we let $F_n$ denote the $n$th Fibonacci number, indexed so that $F_0 = 0$ and $F_1 = 1$.  For the specific families of graphs $\mathcal{F}_n$ and $G_n$, the additive sets $\cA(G) \subset \Jac(G)$ admit nice descriptions in terms of Fibonacci numbers.  Specifically, the set $\cA(\mathcal{F}_n)$ consists of 0 and all odd-index Fibonacci numbers between 1 and $F_{2n}$.  In other words,
\[
\cA(\mathcal{F}_n) = \{ 0 \} \cup \{ F_{2k-1} \mid 1 \leq k \leq n \} \subset \Z/F_{2n}\Z .
\]
The set $\cA(G_n)$ also admits a nice description:
\[
\cA(G_n) = \{ F_k F_{2n-k} \mid 0 \leq k \leq n \} \subset \Z/F_{2n}\Z .
\]
By Catalan's identity, $\cA(G_n)$ is a translate of the subset
\[
\cB(G_n) = \{ (-1)^{k+1} F_k^2 \mid 0 \leq k \leq n \} \subset \Z/F_{2n}\Z .
\]

The gonality of the fan graph $\mathcal{F}_n$ was computed in \cite{Hendrey18}.  The back half of this paper is primarily focused on computing the gonality of the strip graphs $G_n$.

\begin{theorem}
\label{Thm:MainThm}
We have
\[
\gon(G_n) = \begin{cases}
        \lceil \frac{n+1}{2} \rceil & \text{ if $n \leq 7$} \\
        5 & \text{ if $n \geq 8$}.
    \end{cases}
\]
\end{theorem}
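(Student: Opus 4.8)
The plan is to translate the whole problem into additive combinatorics via Corollary~\ref{Cor:Gonality} and then to understand iterated sumsets of Fibonacci squares. Using Corollary~\ref{Cor:Iso} we identify $\Jac(G_n) \cong \Z/F_{2n}\Z$, and since ranks of divisors are unchanged by translating the additive set, we may replace $\cA(G_n)$ by $\cB(G_n) = \{(-1)^{k+1}F_k^2 : 0 \le k \le n\}$, which contains $0$ (take $k = 0$). For rank $r = 1$, Corollary~\ref{Cor:Gonality} unwinds to the statement that $\gon(G_n) \le d$ if and only if there is an element $c \in \Z/F_{2n}\Z$ with $c - b \in (d-1)\cB(G_n)$ for every $b \in \cB(G_n)$, i.e. with $c - \cB(G_n) \subseteq (d-1)\cB(G_n)$; note that since $0 \in \cB(G_n)$ any such $c$ automatically lies in $(d-1)\cB(G_n)$. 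Thus everything reduces to pinning down, for $m \le 4$, exactly which residues modulo $F_{2n}$ are sums of $m$ signed Fibonacci squares $(-1)^{k+1}F_k^2$ with $0 \le k \le n$ --- precisely the kind of Fibonacci sumset statement promised in the abstract.

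For the upper bound there are two regimes. When $n \le 7$ the group $\Z/F_{2n}\Z$ and the set $\cB(G_n)$ are explicit and small (the orders $F_{2n}$ run through $1, 3, 8, 21, 55, 144, 377$), so exhibiting a witness $c$ for $d = \lceil \tfrac{n+1}{2} \rceil$ is a bounded computation; equivalently one writes down an explicit effective divisor of that degree on $G_n$ and verifies positive rank by chip-firing. When $n \ge 8$ one must produce, uniformly in $n$, a witness $c$ for $d = 5$, i.e. an element with $c - \cB(G_n) \subseteq 4\cB(G_n)$. I would search for a highly symmetric candidate --- for instance the class of a degree-$5$ effective divisor supported near the two valence-two vertices of $G_n$, or $c = 0$, or $c = F_n^2$ --- and then check the containments $c - (-1)^{k+1}F_k^2 \in 4\cB(G_n)$ as a finite list of identities organized by the parity and size of $k$. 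The tools for this are standard: the Catalan identity $F_k^2 - F_{k-r}F_{k+r} = (-1)^{k-r}F_r^2$, d'Ocagne's identity, and the factorization $F_{2n} = F_n L_n$, which governs how large-index squares reduce modulo $F_{2n}$.

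For the lower bound, the cases $n \le 5$ follow from Lemma~\ref{Lem:Scramble} (the scramble bound $\gon(G_n) \ge 3$ gives the claim for $n \in \{4,5\}$, and $n \le 3$ is immediate), and the cases $n = 6, 7$ --- where one must exclude $d = 3$ to get $\gon(G_n) \ge 4$ --- are finite verifications that $\bigcap_{b \in \cB(G_n)}(b + 2\cB(G_n)) = \emptyset$. The substantial content is $n \ge 8$: one must show $d = 4$ fails, i.e. that there is no $c$ with $c - \cB(G_n) \subseteq 3\cB(G_n)$, together with the (easier, analogous) statement for $2\cB(G_n)$ that rules out $d \le 3$. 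The strategy is first to give a complete description of $3\cB(G_n)$ --- a set of size $O(n^3)$ sitting inside the exponentially large group $\Z/F_{2n}\Z$ --- as an explicit finite union of residue families, each parametrized by a bounded number of indices, obtained by using the Catalan and d'Ocagne identities to normalize triples of signed squares. One then takes an arbitrary candidate $c$ (which must lie in $3\cB(G_n)$, as noted) and isolates two values of $k$ --- plausibly one small and one near $n$, or two consecutive values --- whose membership requirements $c - (-1)^{k+1}F_k^2 \in 3\cB(G_n)$ are incompatible once $n$ is large, the incompatibility being forced by the exponential separation of Fibonacci numbers against the restricted structure of $3\cB(G_n)$.

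I expect the main obstacle to be exactly this last step: obtaining a clean and complete classification of $3\cB(G_n)$ modulo $F_{2n}$ (equivalently, of which integers are sums of three signed Fibonacci squares of index at most $n$ after reduction mod $F_{2n}$) and then running the incompatibility argument uniformly over all $n \ge 8$. The recurring danger is that reduction modulo $F_{2n}$ creates coincidences among sums of Fibonacci squares that are invisible over the integers; controlling these coincidences is where the additive-combinatorial results on Fibonacci numbers must actually be proved, and where most of the work lies. By comparison, the degree-$5$ upper-bound construction for $n \ge 8$ should come down to verifying a bounded number of Fibonacci identities once the right witness $c$ (or explicit divisor) has been identified.
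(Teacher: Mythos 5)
Your overall strategy is the paper's: reduce via Corollary~\ref{Cor:Gonality} to showing that no element of $3\cA(G_n)$ (equivalently $3\cB(G_n)$, since a translate only shifts the witness) satisfies $c-\cA(G_n)\subseteq 3\cA(G_n)$, after classifying the small iterated sumsets. But as written the proposal has two concrete gaps that together constitute most of the actual work. First, the degree-$5$ upper bound for $n\geq 8$ is left as ``search for a highly symmetric candidate'': the paper's witness is $c=2F_{2n-1}$, i.e.\ the divisor $3v_0+2v_1$, and verifying it requires the identity $F_{2n}+2F_{2n-1}-F_kF_{2n-k}=F_{k-2}F_{2n-k+2}+3F_{k-1}F_{2n-k+1}$ (proved by induction on $k$), which exhibits $2F_{2n-1}-F_kF_{2n-k}$ as a sum of four elements of $\cA(G_n)$. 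None of the candidates you name ($c=0$ or $c=F_n^2$) is the one that works, and without the identity the ``bounded number of Fibonacci identities'' you defer to is not bounded uniformly in $n$.

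Second, and more seriously, you correctly identify the danger that reduction mod $F_{2n}$ creates coincidences among sums, but you do not supply the tool that controls them. The paper's mechanism is the Zeckendorf normal form: by Corollary~\ref{Cor:A} every nonzero element of $\cA(G_n)$ other than $F_{2n-1}$ has leading term $F_{2n-2}$ with a rigid tail, and since a sum of $m$ numbers each below $F_k$ is below $F_{k+2}$, the leading terms of the Zeckendorf form of any element of $2\cA(G_n)$ or $3\cA(G_n)$ are determined by finitely many local computations (Corollaries~\ref{Cor:2A} and~\ref{Cor:3A}). Membership in $3\cA(G_n)$ is then decided by inspecting leading terms, which is what makes the incompatibility argument run uniformly in $n$. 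The Catalan and d'Ocagne identities you propose do not by themselves yield a normal form with this decidability property, so the ``complete classification of $3\cB(G_n)$ as a finite union of residue families'' remains a hope rather than a construction. Finally, note that even with the Zeckendorf machinery the paper cannot close every case by sumset arithmetic alone: three residual divisors ($2v_0+2v_2$, $2v_0+v_2+v_4$, $2v_0+v_2+v_6$) are eliminated by running Dhar's burning algorithm, a step your outline does not anticipate.
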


\subsection{Fibonacci Numbers}
\label{Sec:Fibonacci}

While our focus in this paper is to use additive combinatorics to compute graph invariants like the gonality, one can also go the other way, using graph invariants to discover additive properties of Fibonacci numbers.  We briefly mention a few results, which are purely statements about Fibonacci numbers, that follow from our graph-theoretic investigation.  The first follows directly from \cite[Theorem~1.7]{BakerNorine07}.

\begin{theorem}
\label{Thm:GenusFib}
Every integer $\pmod{F_{2n}}$ can be expressed as a sum of $n-1$ elements of $\cA(\mathcal{F}_n)$, $\cA(G_n)$, or $\cB(G_n)$.  In other words,
\begin{align*}
(n-1)\cA(\mathcal{F}_n) &= \Z/F_{2n}\Z \\
(n-1)\cA(G_n) &= \Z/F_{2n}\Z \\
(n-1)\cB(G_n) &= \Z/F_{2n}\Z.
\end{align*}
Moreover, there exists an integer $\pmod{F_{2n}}$ that cannot be expressed as a sum of $n-2$ elements of $\cA(\mathcal{F}_n)$, $\cA(G_n)$, or $\cB(G_n)$.  In other words,
\begin{align*}
(n-2)\cA(\mathcal{F}_n) & \subsetneq \Z/F_{2n}\Z \\
(n-2)\cA(G_n) & \subsetneq \Z/F_{2n}\Z \\
(n-2)\cB(G_n) & \subsetneq \Z/F_{2n}\Z.
\end{align*}
\end{theorem}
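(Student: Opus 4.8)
The plan is to read both halves of the theorem off the Riemann--Roch theorem for graphs \cite[Theorem~1.7]{BakerNorine07}, after translating the iterated sumsets $m\cA(G)$ into the language of divisors. The only real work is setting up that dictionary and computing the genus of the two graph families; there is no combinatorics of Fibonacci numbers involved beyond what is already recorded in the introduction.

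First I would fix a vertex $v_0$ of $G$, where $G$ is one of $\mathcal{F}_n$ or $G_n$, and recall that $\cA(G)$ is the image of the Abel--Jacobi map $v \mapsto [v - v_0] \in \Jac(G)$; in particular $0 \in \cA(G)$, and for every $m \ge 0$ a class $c \in \Jac(G)$ lies in $m\cA(G)$ exactly when $c = [D - m v_0]$ for some effective divisor $D$ of degree $m$. Since $c \mapsto c + m[v_0]$ is a bijection $\Jac(G) = \Pic^0(G) \to \Pic^m(G)$, this yields
\[
m\cA(G) = \Jac(G) \iff \text{every degree-}m\text{ divisor on }G\text{ has nonnegative rank}.
\]
(If the proof of Corollary~\ref{Cor:Gonality} already isolates this equivalence, I would simply quote it from there rather than reprove it.)

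Next I would feed in Riemann--Roch. Both $\mathcal{F}_n$ and $G_n$ are connected maximal outerplanar graphs on $n+1$ vertices, hence triangulated $(n{+}1)$-gons with $2(n+1)-3 = 2n-1$ edges and genus $g = (2n-1)-(n+1)+1 = n-1$, and by Corollary~\ref{Cor:Iso} their Jacobians are $\Z/F_{2n}\Z$. By \cite[Theorem~1.7]{BakerNorine07}, on a connected graph of genus $g$ every divisor of degree at least $g$ has nonnegative rank, whereas some divisor of degree $g-1$ is not equivalent to an effective divisor (for instance the divisor $\sum_v (\operatorname{indeg}_{\mathcal{O}}(v)-1)\,v$ attached to an acyclic orientation $\mathcal{O}$). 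Taking $g = n-1$ and applying the boxed equivalence in both directions gives $(n-1)\cA(\mathcal{F}_n) = (n-1)\cA(G_n) = \Z/F_{2n}\Z$ for the first half, and $(n-2)\cA(\mathcal{F}_n) \subsetneq \Z/F_{2n}\Z$, $(n-2)\cA(G_n) \subsetneq \Z/F_{2n}\Z$ for the second, the inclusions being automatic since $m\cA(G) \subseteq \Jac(G)$.

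The statements about $\cB(G_n)$ then come for free: by Catalan's identity $\cB(G_n)$ is a translate of $\cA(G_n)$ (as noted in Section~\ref{Sec:Fibonacci}), and translating an additive set translates each of its iterated sumsets, so $m\cB(G_n) = \Z/F_{2n}\Z$ if and only if $m\cA(G_n) = \Z/F_{2n}\Z$; should $\cB(G_n)$ instead differ from $\cA(G_n)$ by a translate composed with $x \mapsto -x$, the argument is unchanged since $x \mapsto -x$ is an automorphism of $\Z/F_{2n}\Z$. I do not expect a genuine obstacle --- the content is entirely in the divisor/sumset dictionary, which is routine once the paper's precise definition of $\cA(G)$ is in hand --- so the one point I would double-check carefully is the genus computation, since the sharpness of both halves of the theorem rests on $\mathcal{F}_n$ and $G_n$ having genus exactly $n-1$.
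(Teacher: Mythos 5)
Your proposal is correct and follows essentially the same route as the paper: the paper quotes Lemma~\ref{Lem:Genus} (the Baker--Norine fact that the first Betti number $g=n-1$ is the least $m$ with $m\cA(G)=\Jac(G)$) directly, where you re-derive that fact from Riemann--Roch via the same divisor/sumset dictionary, and both arguments handle $\cB(G_n)$ by observing it is a translate of $\cA(G_n)$ via Catalan's identity. No gaps.
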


The next two results are derived from the fact that automorphisms of the graph $G$ induce Freiman isomorphisms of the additive set $\cA(G) \subset \Jac(G)$.

\begin{theorem}
\label{Thm:FreimanFan}
Let $1 \leq k_1 , \ldots, k_m , k'_1, \ldots, k'_m \leq n$.  Then
\[
\sum_{i=1}^m F_{2k_i-1} = \sum_{i=1}^m F_{2k'_i-1} \pmod{F_{2n}} \iff \sum_{i=1}^m F_{2n-2k_i+1} = \sum_{i=1}^m F_{2n-2k'_i+1} \pmod{F_{2n}}.
\]
\end{theorem}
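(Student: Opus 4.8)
The plan is to obtain the stated equivalence as an instance of a Freiman isomorphism of $\cA(\mathcal{F}_n)$ induced by an automorphism of $\mathcal{F}_n$. Write $\mathcal{F}_n$ as the join of a path $v_1 - v_2 - \cdots - v_n$ with a hub vertex $h$, and fix $h$ as the base vertex, so that $\cA(\mathcal{F}_n)$ is the image of $v \mapsto [v-h]$ in $\Jac(\mathcal{F}_n) \cong \Z/F_{2n}\Z$. In the description $\cA(\mathcal{F}_n) = \{0\} \cup \{F_{2k-1} \mid 1\le k\le n\}$ the class of $h$ is $0$, and consecutive path vertices receive consecutive odd-index Fibonacci numbers; after relabeling if necessary we may assume $[v_k - h] = F_{2k-1}$.

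First I would record the general principle. An automorphism $\sigma$ of a graph $G$ acts on $\Div(G)$ preserving degree and principal divisors, hence induces a group automorphism $\sigma_*\colon\Jac(G)\to\Jac(G)$ with $\sigma_*([v-q]) = [\sigma(v) - \sigma(q)]$. Consequently the restriction of $\sigma_*$ to $\cA(G)$ (defined via the base vertex $q$) is a bijection of $\cA(G)$ onto itself that agrees with a group automorphism followed by the translation by $-[\sigma(q)-q]$; in particular it is a Freiman $m$-isomorphism for every $m$, since in any relation $a_1 + \cdots + a_m = a_1' + \cdots + a_m'$ the $m$ translation terms cancel. Thus for all $a_1,\dots,a_m,a_1',\dots,a_m' \in \cA(G)$,
\[
a_1 + \cdots + a_m = a_1' + \cdots + a_m' \iff \sigma_*(a_1) + \cdots + \sigma_*(a_m) = \sigma_*(a_1') + \cdots + \sigma_*(a_m').
\]

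Next I would apply this to the path-reversing automorphism $\sigma$ of $\mathcal{F}_n$ given by $\sigma(h) = h$ and $\sigma(v_k) = v_{n+1-k}$; this preserves adjacency because it reverses the path and the hub is joined to every path vertex. Since $\sigma$ fixes the base vertex $h$, the induced $\sigma_*$ is a genuine group automorphism of $\Z/F_{2n}\Z$, it fixes $0$, and $\sigma_*(F_{2k-1}) = \sigma_*([v_k-h]) = [v_{n+1-k}-h] = F_{2(n+1-k)-1} = F_{2n-2k+1}$. (Equivalently, $\sigma_*$ is multiplication by the unit $F_{2n-1}$ modulo $F_{2n}$, and $F_{2n-1}F_{2k-1}\equiv F_{2n-2k+1}\pmod{F_{2n}}$ by a Catalan/d'Ocagne-type identity.) Now, given indices $1\le k_1,\dots,k_m,k_1',\dots,k_m'\le n$, invoke the displayed equivalence with $a_i = F_{2k_i-1}$ and $a_i' = F_{2k_i'-1}$ in $\cA(\mathcal{F}_n)$: the left-hand side is the first congruence of the theorem, and the right-hand side, using $\sigma_*(F_{2k_i-1}) = F_{2n-2k_i+1}$, is the second.

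The step needing the most care is the bookkeeping in the first paragraph: knowing that under the identification of $\cA(\mathcal{F}_n)$ with the vertices of $\mathcal{F}_n$ the path-reversal becomes $F_{2k-1}\mapsto F_{2n-2k+1}$. This is really part of the explicit description of $\cA(\mathcal{F}_n)$ — which vertex class equals which Fibonacci number — rather than something to be proved afresh here; granting it, the argument is entirely formal, and in particular requires no identities among Fibonacci numbers if one argues through the graph automorphism rather than through multiplication by $F_{2n-1}$.
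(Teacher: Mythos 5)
Your argument is correct and is essentially the paper's own proof: the paper likewise applies its Proposition~\ref{Prop:Freiman} (automorphisms induce Freiman isomorphisms of $\cA(G)$ of arbitrary order) to the involution of $\mathcal{F}_n$ fixing the hub and sending $v_k \mapsto v_{n+1-k}$, which acts on $\cA(\mathcal{F}_n)$ by $F_{2k-1} \mapsto F_{2n-2k+1}$. The only difference is that you re-derive the general principle inline rather than citing it, and your bookkeeping point about which vertex class equals which Fibonacci number is indeed settled by the paper's Lemma~\ref{Lem:Fibonacci} and Corollary~\ref{Cor:FanSetA}.
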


\begin{theorem}
\label{Thm:FreimanStrip}
Let $0 \leq k_1, \ldots, k_m, k'_1, \ldots, k'_m \leq n$.  Then
\begin{align*}
\sum_{i=1}^m F_{k_i}F_{2n-k_i} &= \sum_{i=1}^m F_{k'_i}F_{2n-k'_i} \pmod{F_{2n}} \iff \\
\sum_{i=1}^m F_{n-k_i}F_{n+k_i} &= \sum_{i=1}^m F_{n-k'_i}F_{n+k'_i} \pmod{F_{2n}}.
\end{align*}
\end{theorem}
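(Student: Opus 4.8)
The plan is to deduce Theorem~\ref{Thm:FreimanStrip} from a graph automorphism of the strip graph $G_n$ — concretely, the one reversing the strip — exactly as flagged in the paragraph preceding the statement. Such an automorphism $\iota$ induces a group automorphism $\iota_*$ of $\Jac(G_n) \cong \Z/F_{2n}\Z$ which carries the subset $\cA(G_n)$ onto a translate of itself, and hence restricts (after a translation) to a Freiman isomorphism of $\cA(G_n)$. The only feature of Freiman isomorphisms we need is the defining one: a self-map of $\Z/F_{2n}\Z$ of the form ``group automorphism followed by a translation'' preserves, for every $m$, which $m$-fold sums of elements are equal. So the whole argument comes down to pinning down $\iota$ and computing how $\iota_*$ moves the specific elements $F_k F_{2n-k}$.

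First I would unpack the computation behind the description $\cA(G_n) = \{F_k F_{2n-k} \mid 0 \le k \le n\}$. It comes with a base vertex $v_0$ and a labeling $v_0, v_1, \dots, v_n$ for which $[v_k - v_0] = F_k F_{2n-k}$ in $\Z/F_{2n}\Z$ — in particular $[v_n - v_0] = F_n^2$ — and the strip-reversing involution $\iota$ acts on this labeling by $\iota(v_k) = v_{n-k}$, so that $\iota(v_0) = v_n$. (Should the labeling produced by the divisor-class computation come out shifted relative to the geometric one, a short Fibonacci identity — Catalan's identity suffices — reconciles the two. This bit of bookkeeping is the only place the argument requires a calculation, and getting the correspondence between $\iota$ and the index involution $k \mapsto n-k$ exactly right is the main obstacle.)

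Granting this, the key computation is immediate:
\[
\iota_*\bigl([v_k - v_0]\bigr) = [v_{n-k} - v_n] = [v_{n-k} - v_0] - [v_n - v_0] = F_{n-k} F_{n+k} - F_n^2 \pmod{F_{2n}} ,
\]
so on $\cA(G_n)$ the automorphism $\iota_*$ is the map $F_k F_{2n-k} \mapsto F_{n-k} F_{n+k}$ followed by subtraction of the constant $F_n^2$.

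Hence $\phi \colon x \mapsto \iota_*(x) + F_n^2$ is a group automorphism of $\Z/F_{2n}\Z$ composed with a translation, and therefore, for every $m$ and all $x_1, \dots, x_m, x'_1, \dots, x'_m \in \Z/F_{2n}\Z$,
\[
\sum_{i=1}^m x_i = \sum_{i=1}^m x'_i \pmod{F_{2n}} \iff \sum_{i=1}^m \phi(x_i) = \sum_{i=1}^m \phi(x'_i) \pmod{F_{2n}} ,
\]
since the translations contribute $m F_n^2$ to each side and cancel. Taking $x_i = F_{k_i} F_{2n - k_i}$ and $x'_i = F_{k'_i} F_{2n - k'_i}$, so that $\phi(x_i) = F_{n - k_i} F_{n + k_i}$ and $\phi(x'_i) = F_{n - k'_i} F_{n + k'_i}$ by the displayed identity, this is exactly the asserted equivalence. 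The same strategy, applied to the reflection of the fan graph $\mathcal{F}_n$, yields Theorem~\ref{Thm:FreimanFan}.
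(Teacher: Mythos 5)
Your proposal is correct and takes essentially the same route as the paper: the paper cites its Proposition~\ref{Prop:Freiman} to say that the strip-reversing automorphism $v_k \mapsto v_{n-k}$ induces a Freiman isomorphism $F_kF_{2n-k} \mapsto F_{n-k}F_{n+k}$ of arbitrary order, and your map $\phi = \iota_* + F_n^2$ is exactly the induced map $\psi_*(v_k - v_0) = \psi(v_k) - v_0$ of that proposition, with its proof unpacked in the language of ``automorphism followed by translation.''
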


Finally, the gonalities of these graphs can be reinterpreted in the following way.

\begin{theorem}
\label{Thm:FanGonality}
There exists an integer $x \in \Z/F_{2n}\Z$ such that:
\begin{enumerate}
\item  $x$ can written as a sum of $d-1$ or fewer odd-index Fibonacci numbers between 1 and $F_{2n-1}$ and
\item  for all $k \leq n$, $x$ can be written as a sum of $d$ or fewer such numbers, with $F_{2k-1}$ as one of the summands,
\end{enumerate}
if and only if
\[
d \geq \phi_n \coloneq \min \Big\{\lfloor \sqrt{n+1} \rfloor - 1 + \Big\lceil \frac{n+1-\lfloor \sqrt{n+1} \rfloor}{\lfloor \sqrt{n+1} \rfloor} \Big\rceil , \lceil \sqrt{n+1} \rceil - 1 + \Big\lceil \frac{n+1-\lceil \sqrt{n+1} \rceil}{\lceil \sqrt{n} \rceil} \Big\rceil \Big\} .
\]
\end{theorem}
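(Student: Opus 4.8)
The plan is to recognize the two numbered conditions as an instance of the sumset characterization of gonality in Corollary~\ref{Cor:Gonality}, and then to read off the answer from the known value of $\gon(\mathcal{F}_n)$. I would first recall the rank-one case of that corollary: for any graph $G$ one has $\gon(G) \le d$ if and only if there is an element $x \in \Jac(G)$ with $x - a \in (d-1)\cA(G)$ for every $a \in \cA(G)$. Specializing to $G = \mathcal{F}_n$, recall from Section~\ref{Sec:IntroOuterplanar} that $\cA(\mathcal{F}_n) = \{0\} \cup \{F_{2k-1} \mid 1 \le k \le n\} \subset \Z/F_{2n}\Z$; since $0 \in \cA(\mathcal{F}_n)$, an element $y$ lies in the iterated sumset $m\cA(\mathcal{F}_n)$ exactly when $y$ is a sum of at most $m$ of the odd-index Fibonacci numbers $F_1, F_3, \dots, F_{2n-1}$.

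With this dictionary, condition~(1) says precisely that $x - 0 \in (d-1)\cA(\mathcal{F}_n)$, and for each $k$ the clause of condition~(2) asserting that $x$ is a sum of at most $d$ of the listed numbers with $F_{2k-1}$ among them says precisely that $x - F_{2k-1} \in (d-1)\cA(\mathcal{F}_n)$. Hence conditions~(1) and~(2) hold for some $x$ if and only if there exists $x$ with $x - a \in (d-1)\cA(\mathcal{F}_n)$ for all $a \in \cA(\mathcal{F}_n)$, which by the previous paragraph is exactly the inequality $\gon(\mathcal{F}_n) \le d$. The theorem is therefore equivalent to the identity $\gon(\mathcal{F}_n) = \phi_n$.

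To establish this identity I would invoke the computation of the gonality of the fan graph from \cite{Hendrey18}. Writing that answer as $\gon(\mathcal{F}_n) = \min_{1 \le k \le n}\bigl(k - 1 + \lceil (n+1-k)/k \rceil\bigr)$ (or an equivalent form) and rewriting the summand as $k - 2 + \lceil (n+1)/k \rceil$, the problem reduces to evaluating $\min_{1 \le k \le n}\bigl(k + \lceil (n+1)/k \rceil\bigr)$. A short monotonicity check shows that the function $k + \lceil (n+1)/k \rceil$ is non-increasing for $1 \le k \le \lfloor\sqrt{n+1}\rfloor$ and non-decreasing for $k \ge \lceil\sqrt{n+1}\rceil$, so its minimum is the smaller of its values at those two integers; substituting these integers and observing that $\lceil\sqrt{n+1}\rceil$ may be replaced by $\lceil\sqrt{n}\rceil$ (they differ only when $n$ is a perfect square, where the two resulting expressions coincide) recovers exactly the pair of quantities inside the minimum defining $\phi_n$. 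The translation in the first two paragraphs is routine once Corollary~\ref{Cor:Gonality} is in hand; the step I expect to require the most care is this last reconciliation, where matching the shape in which \cite{Hendrey18} records $\gon(\mathcal{F}_n)$ against the closed form $\phi_n$ invites an off-by-one error, especially around the perfect-square case. (If one instead preferred a proof not relying on \cite{Hendrey18}, the work would move to the additive side: producing an optimal $x$ by clustering $F_1, \dots, F_{2n-1}$ into roughly $\sqrt{n+1}$ blocks for the bound $\gon(\mathcal{F}_n) \le \phi_n$, and a matching lower bound showing that for $d < \phi_n$ no $x$ satisfies $x - a \in (d-1)\cA(\mathcal{F}_n)$ simultaneously for all $a \in \cA(\mathcal{F}_n)$.)
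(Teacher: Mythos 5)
Your proposal is correct and follows essentially the same route as the paper: translate the two conditions into the containment $x - \cA(\mathcal{F}_n) \subseteq (d-1)\cA(\mathcal{F}_n)$ via Corollary~\ref{Cor:Gonality} and Corollary~\ref{Cor:FanSetA}, then quote $\gon(\mathcal{F}_n) = \phi_n$ from \cite{Hendrey18}. The only difference is that the paper cites Hendrey's Theorem~11 as already giving the answer in the form $\phi_n$, so the formula reconciliation you anticipate as the delicate step does not appear there.
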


\begin{theorem}
\label{Thm:StripGonality}
There exists an integer $x \in \Z/F_{2n}\Z$ such that, for all $j \leq n$, $x$ can be written as a sum of $d$ integers of the form $F_k F_{2n-k}$, with $F_j F_{2n-j}$ as one of the summands, if and only if $d \geq \min \{ \lceil \frac{n+1}{2} \rceil, 5 \}$.
\end{theorem}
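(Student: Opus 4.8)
The plan is to derive the statement formally from Theorem~\ref{Thm:MainThm}, the characterization of gonality via iterated sumsets in Corollary~\ref{Cor:Gonality}, the isomorphism $\Jac(G_n)\cong\Z/F_{2n}\Z$ of Corollary~\ref{Cor:Iso}, and the identification $\cA(G_n)=\{F_kF_{2n-k}\mid 0\le k\le n\}$. The only genuinely arithmetic point is that $\min\{\lceil\frac{n+1}{2}\rceil,5\}$ is a closed form for $\gon(G_n)$: since $\lceil\frac{n+1}{2}\rceil\le 5$ exactly when $n\le 9$, for $n\le 7$ this minimum equals $\lceil\frac{n+1}{2}\rceil$, matching the first case of Theorem~\ref{Thm:MainThm}, while $\lceil\frac{n+1}{2}\rceil=5$ for $n\in\{8,9\}$ and the minimum equals $5$ for $n\ge 10$, matching the second case. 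Thus it suffices to show that the displayed condition holds for some $x$ if and only if $d\ge\gon(G_n)$.

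Next I would spell out the divisor-theoretic content of that condition, which is essentially a restatement of Corollary~\ref{Cor:Gonality}. Fix the base vertex $q$ used to define $\cA(G_n)$. A divisor $D$ of degree $d$ satisfies $r(D)\ge 1$ if and only if $D-v$ is linearly equivalent to an effective divisor for every vertex $v$; since the classes $[v]-[q]$ run over all of $\cA(G_n)$ as $v$ runs over $V(G_n)$, and since effective divisors of degree $m$ are exactly those whose class lies in $m\cA(G_n)$, this holds if and only if $x\coloneq[D]-d[q]$ satisfies $x-a\in(d-1)\cA(G_n)$ for every $a\in\cA(G_n)$. Transporting along Corollary~\ref{Cor:Iso} and writing each $a$ as $F_jF_{2n-j}$ with $0\le j\le n$, this says exactly that for every $j\le n$ the element $x\in\Z/F_{2n}\Z$ is a sum of $d$ integers of the form $F_kF_{2n-k}$ having $F_jF_{2n-j}$ among the summands. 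Here one uses $0=F_0F_{2n}\in\cA(G_n)$, so that a sum of exactly $d$ and a sum of at most $d$ such integers coincide, and the $j=0$ instance is precisely the requirement that $D$ itself be equivalent to an effective divisor.

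Given this dictionary, both implications are immediate. If $d\ge\gon(G_n)$, take a divisor of degree $\gon(G_n)$ and rank at least $1$, add $(d-\gon(G_n))q$ to it (which keeps the rank at least $1$), and read off the corresponding $x\in\Z/F_{2n}\Z$. Conversely, any $x$ satisfying the displayed condition equals $[D]-d[q]$ for a divisor $D$ of degree $d$ with $r(D)\ge 1$, so $\gon(G_n)\le d$. The one step requiring care is the translation in the second paragraph: one must match the conventions of Corollary~\ref{Cor:Gonality} exactly, both the correspondence between effective divisors and iterated sumsets and the bookkeeping of the base point (the $j=0$ case); no ideas beyond Theorem~\ref{Thm:MainThm} enter. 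The same scheme proves Theorem~\ref{Thm:FanGonality}, using $\phi_n=\gon(\mathcal{F}_n)$ from \cite{Hendrey18} and $\cA(\mathcal{F}_n)=\{0\}\cup\{F_{2k-1}\mid 1\le k\le n\}$ in place of the corresponding strip-graph data, with condition~(1) there playing the role of the $j=0$ case.
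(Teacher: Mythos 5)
Your proposal is correct and follows essentially the same route as the paper: combine Theorem~\ref{Thm:MainThm} with Corollary~\ref{Cor:Gonality} and the description $\cA(G_n)=\{F_kF_{2n-k}\mid 0\le k\le n\}$ from Corollary~\ref{Cor:SetA}, then unwind the condition $x-\cA(G_n)\subseteq(d-1)\cA(G_n)$ into the stated sumset language. Your extra care with the closed form $\min\{\lceil\frac{n+1}{2}\rceil,5\}$ and with the role of $0=F_0F_{2n}$ is sound and, if anything, slightly more explicit than the paper's own two-line argument.
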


\subsection{Outline of the Paper}

In Section~\ref{Sec:Preliminaries}, we introduce the basic theory of divisors on graphs, including the monodromy pairing, and its relation to counts of certain types of spanning forests.  In Section~\ref{Sec:Outerplanar}, we begin our discussion of outerplanar graphs, proving in Corollary~\ref{Cor:Iso} that the Jacobian of certain outerplanar graphs is always cyclic of order $F_{2n}$.  In Section~\ref{Sec:Fan}, we discuss the fan graphs, describing the set $\cA(\mathcal{F}_n)$ and proving Theorems~\ref{Thm:FreimanFan} and~\ref{Thm:FanGonality}.  Then, in Section~\ref{Sec:Strip}, we turn to the strip graphs, proving Theorems~\ref{Thm:GenusFib} and Theorem~\ref{Thm:FreimanStrip}.  We also prove, in Theorems~\ref{Thm:BoundOf3} and~\ref{Thm:BoundOf4}, that the gonality of $G_n$ is bounded below by 4 for $n \geq 6$.  Finally, in Section~\ref{Sec:MainThm}, we improve this bound to 5 for $n \geq 8$, proving Theorem~\ref{Thm:MainThm}.  This last section involves a significant amount of tedious computation, but the strategy of proof is the same as for the simpler lower bounds in Theorems~\ref{Thm:BoundOf3} and~\ref{Thm:BoundOf4}.  Readers who are uninterested in these technical calculations are encouraged to read the proofs of these simpler theorems instead.

\subsection*{Acknowledgements}

This research was supported by NSF DMS-2054135.

\section{Preliminaries}
\label{Sec:Preliminaries}

\subsection{Divisors on Graphs}
\label{Sec:Divisors}

In this section, we describe the basic theory of divisors on graphs.  For more detail, we refer the reader to \cite{Baker08, BakerJensen16}.

Let $G$ be a graph with $n+1$ vertices, no loops, and possibly with parallel edges.  Throughout, we fix an ordering $v_0 , \ldots , v_n$ of the vertices of $G$.  The vertex $v_0$ will be referred to as the \emph{base vertex}.  A \emph{divisor} $D$ on $G$ is a formal linear combination of vertices $D = \sum_{i=0}^n a_i \cdot v_i$.  We may think of a divisor as an integer vector of length $n+1$.

The \emph{graph Laplacian} of $G$ is the $(n+1) \times (n+1)$ matrix with rows and columns indexed by the vertices of $G$, and whose $(i,j)$th entry is
\begin{displaymath}
\Delta_{i,j} = \left\{ \begin{array}{ll}
\val(v_i)&\text{if }i=j\\
-\text{\# of edges between }v_i\text{ and }v_j&\text{if }i\neq j.
\end{array} \right.
\end{displaymath}
That is, $\Delta$ is the difference of the valency matrix and the adjacency matrix.

Considering $\Delta$ as a map from $\Z^{n+1}$ to $\Z^{n+1}$, its image is the set of \emph{principal divisors}.  Two divisors $D$ and $D'$ are \emph{equivalent} if their difference is principal.  In other words, $D$ is equivalent to $D'$ if there exists $\vec{v} \in \Z^{n+1}$ such that $D-D' = \Delta \vec{v}$.  The set of equivalence classes of divisors on $G$ is a group under addition, known as the \emph{Picard group}
\[
\Pic (G) = \Z^{n+1} /\Delta \Z^{n+1} .
\]

The \emph{degree} of a divisor $D = \sum_{i=0}^n a_i \cdot v_i$ is the integer $D = \sum_{i=0}^n a_i$.  Since all principal divisors have degree zero, the degree of a divisor is invariant under equivalence.  The group of equivalence classes of divisors of degree zero is called the \emph{Jacobian} $\Jac(G)$.  It is a consequence of Kirchoff's matrix tree theorem that $\vert \Jac(G) \vert$ is equal to the number of spanning trees in $G$, often denoted $\kappa (G)$.

\subsection{The Monodromy Pairing}
\label{Sec:Pairing}

Since every row of the graph Laplacian $\Delta$ sums to zero, $\Delta$ is not invertible.  Every matrix $\Delta$, however, has a \emph{generalized inverse} -- that is, a matrix $L$ such that $\Delta L \Delta = \Delta$.

One way to construct a generalized inverse is as follows.  Let $\widetilde{\Delta}$ be the $n \times n$ matrix obtained from $\Delta$ by deleting the first row and first column.  Now, let $L$ be the $(n+1) \times (n+1)$ matrix obtained by appending a zero row and zero column to the top and left of $\widetilde{\Delta}^{-1}$.  Then $L$ is a generalized inverse of $\Delta$.  For the remainder of the paper, we fix $L$ to be this particular generalized inverse.

In \cite{Shokrieh}, Shokrieh defines the \emph{monodromy pairing} on $\Jac (G)$.  The pairing
\[
\langle \cdot , \cdot , \rangle \colon \Jac(G) \times \Jac(G) \to \Q/\Z
\]
is given by:
\[
\langle D, D' \rangle = [D]^T L [D'] \pmod{\Z} .
\]
The monodromy pairing is independent of the choice of generalized inverse $L$.

For specific generators of $\Jac(G)$, the monodromy pairing can be explicity described as follows.  Let $\kappa_{i,j}(G)$ denote the number of 2-component spanning forests of $G$ such that one component contains the base vertex $v_0$ and the other component contains both $v_i$ and $v_j$.

\begin{theorem}
\label{Thm:Minors}
We have
\[
\langle v_i - v_0, v_j -v_0 \rangle = L_{i,j} = \frac{\kappa_{i,j}(G)}{\kappa (G)} .
\]
\end{theorem}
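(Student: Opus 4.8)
The first equality is immediate from the definitions, and for the second the plan is to reduce to a single minor of the Laplacian and then invoke the all-minors matrix-tree theorem. For the first equality: the divisor $v_i - v_0$ is represented by $e_i - e_0 \in \Z^{n+1}$, where $e_0, \ldots, e_n$ is the standard basis indexed by $v_0, \ldots, v_n$. Since $L$ was obtained by bordering $\widetilde{\Delta}^{-1}$ with a zero row and a zero column in the position of the base vertex, $L e_0 = 0$ and $e_0^{T} L = 0$, so
\[
\langle v_i - v_0,\, v_j - v_0 \rangle \equiv (e_i - e_0)^{T} L (e_j - e_0) \equiv e_i^{T} L e_j = L_{i,j} \pmod{\Z}.
\]
If $i = 0$ or $j = 0$ every term in the statement vanishes, so assume $1 \le i, j \le n$; it remains to prove the equality of rational numbers $L_{i,j} = \kappa_{i,j}(G)/\kappa(G)$.

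For this, write $L_{i,j} = (\widetilde{\Delta}^{-1})_{i,j}$ and expand by the adjugate. Since $\widetilde{\Delta}$ is symmetric,
\[
L_{i,j} = \frac{(-1)^{i+j} \det \widetilde{\Delta}'}{\det \widetilde{\Delta}},
\]
where $\widetilde{\Delta}'$ denotes $\widetilde{\Delta}$ with the row of $v_i$ and the column of $v_j$ removed; equivalently, $\widetilde{\Delta}'$ is the full Laplacian $\Delta$ with the rows indexed by $\{v_0, v_i\}$ and the columns indexed by $\{v_0, v_j\}$ removed. By Kirchhoff's matrix-tree theorem $\det \widetilde{\Delta} = \kappa(G)$, so the whole statement reduces to the identity $\det \widetilde{\Delta}' = (-1)^{i+j}\, \kappa_{i,j}(G)$.

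I would obtain this from Chaiken's all-minors matrix-tree theorem, or reprove that result in this case via Cauchy--Binet: fixing a signed incidence matrix $N$ of $G$ (so $\Delta = N N^{T}$), one has $\widetilde{\Delta}' = N_R N_C^{\,T}$, where $N_R$, $N_C$ are the submatrices of $N$ on the rows indexed by $V(G)\setminus\{v_0,v_i\}$ and $V(G)\setminus\{v_0,v_j\}$, and $\det \widetilde{\Delta}'$ expands as a signed sum over $(n-1)$-element edge subsets $S$. By the classical description of minors of incidence matrices, the term of $S$ vanishes unless $(V(G), S)$ is a two-component spanning forest that separates $v_0$ from $v_i$ and also separates $v_0$ from $v_j$; since there are only two components, this forces one component to contain $v_0$ and the other to contain both $v_i$ and $v_j$, so the surviving $S$ are counted exactly by $\kappa_{i,j}(G)$. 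The delicate point, which I expect to be the main obstacle, is the sign: one must check that every surviving term equals $(-1)^{i+j}$, independently of $S$. This is precisely what the all-minors matrix-tree theorem supplies — the bijection it attaches to a surviving forest matches the deleted-row index $v_0$ with the deleted-column index $v_0$ and $v_i$ with $v_j$ (because $v_0$ lies in one forest component and $v_i, v_j$ in the other), so the associated permutation is the identity and contributes $+1$, leaving only the positional factor $(-1)^{i+j}$; alternatively the signs can be pinned down by hand by choosing convenient orientations on $S$ that make the two reduced-incidence determinants explicit. Granting this, $\det \widetilde{\Delta}' = (-1)^{i+j}\kappa_{i,j}(G)$, and substituting into the displayed formula for $L_{i,j}$ gives $L_{i,j} = \kappa_{i,j}(G)/\kappa(G)$.
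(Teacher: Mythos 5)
Your proposal is correct and follows essentially the same route as the paper's proof: the first equality from the bordered form of $L$, Cramer's rule (the adjugate formula) to express $L_{i,j}$ as a cofactor over $\det\widetilde{\Delta}$, Kirchhoff's theorem for the denominator, and Chaiken's all-minors matrix-tree theorem for the numerator. The only difference is that you unpack the cofactor into a sign times a minor and sketch the Cauchy--Binet sign verification, whereas the paper simply cites the all-minors theorem for the signed cofactor directly.
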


\begin{proof}
The fact that $\langle v_i - v_0, v_j -v_0 \rangle = L_{i,j}$ is immediate from the definition of the monodromy pairing.  If either $i$ or $j$ is equal to 0, then $L_{i,j} = \kappa_{i,j}(G) = 0$.  By Cramer's rule, for $i,j \neq 0$, we have
\[
L_{i,j} = \frac{C_{i,j}}{\mathrm{det}(\widetilde{\Delta})},
\]
where $C_{i,j}$ is the $(i,j)$th cofactor of $\widetilde{\Delta}$.  By the matrix tree theorem, we have
\[
\mathrm{det} (\widetilde{\Delta}) = \vert \Jac(G) \vert = \kappa (G),
\]
and by the all minors matrix tree theorem from \cite{Chaiken}, we have $C_{i,j} = \kappa_{i,j}(G)$.
\end{proof}

\subsection{The Rank of a Divisor}
\label{Sec:Rank}

A divisor $D = \sum_{i=0}^n a_i \cdot v_i$ is called \emph{effective} if $a_i \geq 0$ for all $i$.  If a divisor $D$ is not equivalent to an effective divisor, we say that it has rank $-1$.  Otherwise, we define the \emph{rank} of a divisor $D$ to be the maximum integer $r$ such that $D-E$ is equivalent to an effective divisor for all effective divisors $E$ of degree $r$.

In this section, we reinterpret the rank of a divisor in the language of additive combinatorics.  Define the set
\[
\cA(G) \coloneq \{ v_i - v_0 \mid 0 \leq i \leq n \} \subseteq \Jac(G) .
\]
For a positive integer $m$, we write
\[
m\cA(G) \coloneq \{ a_1 + \cdots + a_m \mid a_i \in \cA(G) \} .
\]
Since $0 \in \cA(G)$, we have $(m-1)\cA(G) \subseteq m\cA(G)$ for all $m$.  Note that a divisor $D$ of degree $d$ is equivalent to an effective divisor if and only if $D-dv_0 \in \cA(G)$.  This yields the following result.

\begin{lemma}\cite[Theorem~1.7]{BakerNorine07}
\label{Lem:Genus}
For any graph $G$, the first Betti number $g = \vert E(G) \vert - \vert V(G) \vert + 1$ is the smallest integer such that $g\cA(G) = \Jac(G)$.
\end{lemma}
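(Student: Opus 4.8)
The plan is to recast the statement so that the theorem of Baker and Norine cited in the lemma applies directly, and to package the bookkeeping about $\cA(G)$ into clean equivalences. The key translation, noted in the paragraph preceding the lemma, is that a divisor $D$ of degree $d$ is equivalent to an effective divisor precisely when $D - d v_0 \in \cA(G)$, since $D - d v_0$ has degree zero and effectivizing $D$ amounts to writing $[D] = [v_{i_1} + \cdots + v_{i_d}]$, i.e. $[D - d v_0] = \sum_{j=1}^d [v_{i_j} - v_0] \in d\cA(G)$, and conversely. Thus $d\cA(G) = \Jac(G)$ if and only if every degree-$d$ divisor class is effective, i.e. every divisor of degree $d$ has nonnegative rank.

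First I would record the purely combinatorial facts about the chain $\cA(G) \subseteq 2\cA(G) \subseteq \cdots$. Since $0 \in \cA(G)$ we have $(m-1)\cA(G) \subseteq m\cA(G)$, so once $m\cA(G) = \Jac(G)$ for some $m$ it stays equal for all larger $m$; hence ``smallest $m$ with $m\cA(G) = \Jac(G)$'' is well-defined as soon as we know some such $m$ exists. Next I would invoke the Riemann–Roch theorem for graphs \cite[Theorem~1.7]{BakerNorine07}: every divisor of degree $\geq g$ has nonnegative rank (indeed rank $\geq \deg D - g$), which by the translation above gives $g\cA(G) = \Jac(G)$, so the smallest such integer is at most $g$. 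For the matching lower bound I would produce a divisor of degree $g-1$ and rank $-1$: again by Riemann–Roch, the canonical divisor $K$ has degree $2g-2$ and rank $g-1$, and choosing any effective divisor $E$ of degree $g-1$ with $K - E$ not effective — equivalently, taking a divisor $D$ with $\deg D = g-1$ and $\deg(K - D) = g - 1$ so that exactly one of $D$, $K-D$ can fail — one uses the symmetry in Riemann–Roch: if every degree-$(g-1)$ divisor were effective then $r(D) \geq 0$ and $r(K-D)\geq 0$ for all such $D$, and Riemann–Roch $r(D) - r(K-D) = \deg D - g + 1 = 0$ would force $r(D) = r(K-D)$ for all of them, which is consistent — so instead I would exhibit an explicit non-effective degree-$(g-1)$ divisor. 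The cleanest route: pick a spanning tree $T$ of $G$ and orient the $g$ non-tree edges; the divisor $D = \sum_{v}(\mathrm{indeg}_{D}(v) - 1)\,v$ of degree $g-1$ associated to a suitable acyclic-away-from-one-source orientation (a ``Dhar'' or ``$v_0$-reduced'' extremal configuration) is well known to have rank $-1$, giving $(g-1)\cA(G) \neq \Jac(G)$.

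I expect the main obstacle to be the lower bound: one must actually certify that some degree-$(g-1)$ divisor is not equivalent to an effective divisor, and the slick symmetry argument from Riemann–Roch alone does not do it. The standard fix is to use the theory of $v_0$-reduced divisors (Dhar's burning algorithm) — every divisor class has a unique $v_0$-reduced representative, and a degree-$(g-1)$ divisor coming from an acyclic orientation with unique source $v_0$, shifted by $-v_0$, is $v_0$-reduced with negative coefficient at $v_0$, hence non-effective and of rank $-1$. Once this single example is in hand, it shows $g-1$ fails while $g$ works, and combined with the monotonicity of the sumset chain this pins down $g$ as the exact threshold, completing the proof. Alternatively, since the lemma is attributed to Baker–Norine, one may simply cite that the genus is the least $m$ for which the ``$m$-th sumset is everything'' is equivalent to their statement that $g$ is least with ``every degree-$g$ divisor has positive rank $\geq 0$'', and the degree-$(g-1)$ counterexample is exactly their demonstration of sharpness.
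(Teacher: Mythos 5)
Your argument is correct: the upper bound is the Riemann inequality $r(D) \geq \deg D - g$ from Riemann--Roch for graphs, and the lower bound is the standard degree-$(g-1)$, rank-$(-1)$ divisor $\nu = \sum_v(\mathrm{indeg}(v)-1)\,v$ attached to an acyclic orientation with unique source $v_0$, which is already $v_0$-reduced with coefficient $-1$ at $v_0$ (no further shift needed) and hence not equivalent to an effective divisor. The paper itself gives no proof --- it simply cites \cite[Theorem~1.7]{BakerNorine07} after the translation, recorded in the preceding paragraph, that $d\cA(G)=\Jac(G)$ if and only if every degree-$d$ divisor class contains an effective divisor --- and your reconstruction is essentially Baker and Norine's own argument; the only dispensable part is the abandoned Riemann--Roch-symmetry digression in your second paragraph, which, as you note yourself, does not produce the required non-effective divisor and can be deleted.
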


Given a divisor $D$, we write
\[
D - \cA(G) = \{ D-a \mid a \in \cA(G) \} .
\]

\begin{proposition}
\label{Prop:Rank}
Let $D$ be a divisor of degree $d$ on a graph $G$.  Then $D$ has rank at least $r$ if and only if
\[
(D-dv_0) - r\cA(G) \subseteq (d-r)\cA(G) .
\]
\end{proposition}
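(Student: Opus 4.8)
The plan is to prove the proposition by unwinding the definition of rank, translating each clause into the language of iterated sumsets. Everything rests on the following elementary dictionary: a divisor $D'$ of degree $d' \geq 0$ is equivalent to an effective divisor if and only if the class $D' - d'v_0$ lies in $d'\cA(G)$, and more precisely, as $E$ ranges over all effective divisors of a fixed degree $m \geq 0$, the class $E - mv_0$ ranges over exactly $m\cA(G)$. This is immediate from the definitions: an effective divisor of degree $m$ is a sum $v_{i_1} + \cdots + v_{i_m}$, so $E - mv_0 = (v_{i_1} - v_0) + \cdots + (v_{i_m} - v_0)$, and $\cA(G) = \{v_i - v_0 \mid 0 \le i \le n\}$.

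First I would dispose of the degenerate ranges of $r$. For $r \leq 0$ the assertion reduces to the case $r = 0$, which is exactly the first half of the dictionary above (with the convention $0\cdot\cA(G) = \{0\}$); and if $r > d$, then $D - E$ has negative degree for every effective $E$ of degree $r$, so $D$ cannot have rank $\geq r$, while on the other side $(d-r)\cA(G)$ is empty whereas $(D - dv_0) - r\cA(G)$ contains $D - dv_0$ (take every summand equal to $0 \in \cA(G)$), so the claimed inclusion also fails. Thus I may assume $1 \leq r \leq d$.

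The main argument is then a chain of equivalences. By definition, $D$ has rank at least $r$ if and only if $D - E$ is equivalent to an effective divisor for every effective divisor $E$ of degree $r$. Since $\deg(D - E) = d - r \geq 0$, the dictionary rewrites this as: $(D - E) - (d-r)v_0 \in (d-r)\cA(G)$ for every such $E$. The base-vertex bookkeeping gives $(D-E) - (d-r)v_0 = (D - dv_0) - (E - rv_0)$, and as $E$ ranges over effective divisors of degree $r$ the class $E - rv_0$ ranges over all of $r\cA(G)$. Hence the condition is equivalent to: $(D - dv_0) - b \in (d-r)\cA(G)$ for all $b \in r\cA(G)$; that is, $(D - dv_0) - r\cA(G) \subseteq (d-r)\cA(G)$, as claimed.

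I do not expect a genuine obstacle here: the proposition is a definitional reformulation. The only points demanding care are the degree bookkeeping around the base vertex $v_0$, so that precisely the sumset $(d-r)\cA(G)$ appears on the right with the correct index, and the two invocations of the effective-divisors-versus-sumset dictionary, once with $m = d-r$ and once with $m = r$.
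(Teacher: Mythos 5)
Your proposal is correct and follows essentially the same route as the paper: translate ``$D-E$ is equivalent to an effective divisor'' into $(D-E)-(d-r)v_0 \in (d-r)\cA(G)$ and ``$E$ effective of degree $r$'' into $E-rv_0 \in r\cA(G)$, then chain the equivalences. The extra care you take with the degenerate ranges $r\le 0$ and $r>d$ is fine but not needed for anything in the paper.
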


\begin{proof}
By definition, $D$ has rank at least $r$ if, for all effective divisors $E$ of degree $r$, $D-E$ is equivalent to an effective divisor -- that is, if $D-E-(d-r)v_0 \in (d-r)\cA(G)$.  Since $E$ is effective if and only if $E-rv_0 \in r\cA(G)$, we see that $D$ has rank at least $r$ if and only if $(D-dv_0) - r\cA(G) \subseteq (d-r)\cA(G)$.
\end{proof}

The $r$-\emph{gonality} $\gon_r(G)$ of a graph $G$ is the minimum degree of a divisor with rank at least $r$.  The 1-gonality is typically just called the gonality.  By Proposition~\ref{Prop:Rank}, we have the following.

\begin{corollary}
\label{Cor:Gonality}
For a graph $G$, we have
\[
\gon_r(G) = \min \{ d \mid \exists D \in \Jac(G) \text{ such that } D-r\cA(G) \subseteq (d-r)\cA(G) \} .
\]
\end{corollary}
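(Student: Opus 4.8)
The plan is to deduce the formula directly from Proposition~\ref{Prop:Rank} together with the definition of $r$-gonality; the only work is a bookkeeping translation between divisors of a fixed degree and elements of the Jacobian.

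First I would record that the rank of a divisor depends only on its equivalence class: if $D \sim D'$, then $D - E \sim D' - E$ for every effective $E$, so $D-E$ is equivalent to an effective divisor exactly when $D'-E$ is. Hence $\gon_r(G)$ may be computed by minimizing the degree over equivalence classes of divisors of rank at least $r$. Next, for a fixed degree $d$, the assignment $D \mapsto [D - dv_0]$ is a bijection between equivalence classes of degree-$d$ divisors and elements of $\Jac(G)$, with inverse $[E] \mapsto [E + dv_0]$. Under this bijection, Proposition~\ref{Prop:Rank} states that (the class of) $D$ has rank at least $r$ if and only if the corresponding element $[D - dv_0] \in \Jac(G)$ satisfies $[D-dv_0] - r\cA(G) \subseteq (d-r)\cA(G)$. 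Therefore there exists a divisor of degree $d$ and rank at least $r$ if and only if there exists $D \in \Jac(G)$ with $D - r\cA(G) \subseteq (d-r)\cA(G)$, and taking the minimum over all such $d$ yields the stated identity.

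The only point requiring a word of care is the edge case $d < r$, where the symbol $(d-r)\cA(G)$ is not defined. Since any divisor of rank at least $r$ has degree at least $r$ — subtracting an effective divisor of degree $r$ leaves a divisor equivalent to an effective one, hence of nonnegative degree — the minimum on the right-hand side is attained at some $d \geq r$, so the undefined cases may simply be omitted from the set being minimized (for $d = r$ one reads $0\cA(G) = \{0\}$). I do not anticipate any genuine obstacle: the substance of the statement is entirely contained in Proposition~\ref{Prop:Rank}, and the corollary is a matter of re-indexing by degree and passing to the Jacobian.
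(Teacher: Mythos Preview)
Your proposal is correct and follows the same approach as the paper, which treats the corollary as an immediate consequence of Proposition~\ref{Prop:Rank} without writing out a separate proof. You have simply made explicit the bookkeeping (the bijection between degree-$d$ divisor classes and $\Jac(G)$, and the edge case $d<r$) that the paper leaves to the reader.
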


Note that, since $0 \in r\cA(G)$, if $D-r\cA(G) \subseteq (d-r)\cA(G)$, then $D \in (d-r)\cA(G)$.  We will make frequent use of this simple observation in the proofs of Theorems~\ref{Thm:MainThm}, ~\ref{Thm:BoundOf3}, and~\ref{Thm:BoundOf4}.  We also have the following lower bound.

\begin{corollary}
\label{Cor:Bound}
Let $H$ be an abelian group and let $\varphi \colon \Jac(G) \to H$ be a homomorphism.  Then
\[
\gon_r(G) \geq \min \{ d \mid \exists D \in \Jac(G) \text{ such that } \varphi(D) - r\varphi(\cA(G)) \subseteq (d-r)\varphi(\cA(G)) \} .
\]
\end{corollary}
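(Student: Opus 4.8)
The plan is to derive this directly from Corollary~\ref{Cor:Gonality} by pushing the containment $D - r\cA(G) \subseteq (d-r)\cA(G)$ through the homomorphism $\varphi$. The key observation is that homomorphisms respect sumsets: for any additive set $S$ and any positive integer $m$, we have $\varphi(mS) = m\varphi(S)$, since $\varphi(s_1 + \cdots + s_m) = \varphi(s_1) + \cdots + \varphi(s_m)$. In particular $\varphi(r\cA(G)) = r\varphi(\cA(G))$ and $\varphi((d-r)\cA(G)) = (d-r)\varphi(\cA(G))$.

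First I would take any divisor $D$ realizing the gonality, so that $d = \gon_r(G)$ and, by Corollary~\ref{Cor:Gonality}, $D - r\cA(G) \subseteq (d-r)\cA(G)$ inside $\Jac(G)$. Applying $\varphi$ to both sides and using that $\varphi$ of a set containment is a set containment, together with the sumset identity above, yields
\[
\varphi(D) - r\varphi(\cA(G)) = \varphi\big(D - r\cA(G)\big) \subseteq \varphi\big((d-r)\cA(G)\big) = (d-r)\varphi(\cA(G)) .
\]
Here one should be slightly careful that $\varphi(D - r\cA(G))$ equals $\varphi(D) - r\varphi(\cA(G))$ as sets: this is because $\varphi(D - a) = \varphi(D) - \varphi(a)$ and $\varphi$ applied to $r\cA(G)$ gives $r\varphi(\cA(G))$, so the Minkowski-type difference is preserved. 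Thus $\varphi(D)$ witnesses membership of $d = \gon_r(G)$ in the set on the right-hand side of the claimed inequality, which shows that $\gon_r(G)$ is at least the minimum of that set.

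I do not expect any serious obstacle here; the statement is essentially the functoriality of iterated sumsets under group homomorphisms combined with Corollary~\ref{Cor:Gonality}. The only point requiring a word of care is the bookkeeping that set-theoretic operations (Minkowski sums, translates, containments) commute with applying $\varphi$, which is routine. One might also remark that the bound is genuinely useful precisely when $H$ is small enough that the sumsets $m\varphi(\cA(G))$ can be computed by hand — this is how it will be deployed in the lower-bound arguments later in the paper, typically with $H$ a cyclic quotient of $\Jac(G) \cong \Z/F_{2n}\Z$.
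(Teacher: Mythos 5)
Your argument is correct and is exactly the derivation the paper intends (the paper states Corollary~\ref{Cor:Bound} without proof as an immediate consequence of Corollary~\ref{Cor:Gonality}): push the witnessing containment $D - r\cA(G) \subseteq (d-r)\cA(G)$ through $\varphi$, using that homomorphisms commute with iterated sumsets and Minkowski differences. No issues.
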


Corollary~\ref{Cor:Bound} is useful in applications because, for any vertex $v_i$ in $G$, we have the homomorphism
\[
\langle \cdot , v_i-v_0 \rangle \colon \Jac(G) \to \Q/\Z ,
\]
and we can use Theorem~\ref{Thm:Minors} to completely describe the set
\[
\langle \cA(G) , v_i-v_0 \rangle = \{ \kappa_{i,j}(G) \mid 0 \leq j \leq n \} \subseteq \Z/\kappa(G)\Z \subset \Q/\Z .
\]
Since every finite subgroup of $\Q/\Z$ is cyclic, this allows us to bound the gonality of a graph using techniques from additive combinatorics on cyclic groups.

\subsection{Freiman Isomorphisms}
\label{Sec:Freiman}

A fundamental concept in additive combinatorics is that of the \emph{Freiman isomorphism}.  Let $\cA , \cB$ be subsets of abelian groups $G$ and $H$, respectively.  A \emph{Freiman isomorphism} of order $m$ from $\cA$ to $\cB$ is a bijection $\psi \colon \cA \to \cB$ such that
\[
\sum_{i=1}^m a_i = \sum_{i=1}^m a'_i \iff \sum_{i=1}^m \psi (a_i) = \sum_{i=1}^m \psi (a'_i)
\]
for all $a_1 , \ldots , a_m , a'_1 , \ldots , a'_m \in \cA$.  The following simple observation will be useful for identifying Freiman isomorphisms.

\begin{proposition}
\label{Prop:Freiman}
Let $\psi$ be an automorphism of a graph $G$.  Then the induced map $\psi_* \colon \cA(G) \to \cA(G)$ given by $\psi_* (v_i - v_0) = \psi(v_i) - v_0$ is a Freiman isomorphism of arbitrary order.
\end{proposition}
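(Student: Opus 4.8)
The plan is to verify the defining condition of a Freiman isomorphism directly, using the fact that a graph automorphism induces an automorphism of the Jacobian compatible with the map $\cA(G) \to \cA(G)$. First I would observe that $\psi$ permutes the vertices of $G$ and hence acts on $\Z^{n+1}$ by a permutation matrix $P$; since $\psi$ preserves the edge structure, $P$ commutes with the graph Laplacian, i.e. $P\Delta = \Delta P$. Therefore $P$ descends to a well-defined group automorphism $\overline{\psi}$ of $\Pic(G)$, and since $P$ preserves degree it restricts to a group automorphism $\overline{\psi} \colon \Jac(G) \to \Jac(G)$. One subtlety: the base vertex $v_0$ need not be fixed by $\psi$, so $\overline{\psi}$ does not literally send $v_i - v_0$ to $\psi(v_i) - v_0$. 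Instead, $\overline{\psi}([v_i - v_0]) = [\psi(v_i) - \psi(v_0)] = [\psi(v_i) - v_0] - [\psi(v_0) - v_0]$. Thus the map $\psi_*$ in the statement is the composition of the group automorphism $\overline{\psi}$ with the translation by the fixed element $t \coloneqq [\psi(v_0) - v_0] \in \Jac(G)$; that is, $\psi_*(a) = \overline{\psi}(a) + t$ for all $a \in \cA(G)$.

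Next I would check that $\psi_*$ is a bijection from $\cA(G)$ to $\cA(G)$: it maps $\cA(G)$ into $\cA(G)$ because $\psi_*(v_i - v_0) = \psi(v_i) - v_0 \in \cA(G)$ by definition, and it is injective because both $\overline{\psi}$ and translation by $t$ are injective on $\Jac(G)$; since $\cA(G)$ is finite, $\psi_*$ is a bijection. Finally, to verify the Freiman condition of arbitrary order $m$: given $a_1,\dots,a_m,a_1',\dots,a_m' \in \cA(G)$, the equality $\sum \psi_*(a_i) = \sum \psi_*(a_i')$ expands as $\sum \overline{\psi}(a_i) + mt = \sum \overline{\psi}(a_i') + mt$, which cancels to $\overline{\psi}\bigl(\sum a_i\bigr) = \overline{\psi}\bigl(\sum a_i'\bigr)$, and since $\overline{\psi}$ is an injective group homomorphism this holds if and only if $\sum a_i = \sum a_i'$. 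This is precisely the Freiman isomorphism condition, and it holds for every $m$.

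I do not anticipate a genuine obstacle here; the only point requiring care is the bookkeeping around the base vertex, namely recognizing that $\psi_*$ is a group automorphism followed by a translation rather than a group automorphism on the nose. Since translations preserve all additive relations (the constant $mt$ appears on both sides and cancels), this does not interfere with the Freiman property. The argument is otherwise a routine unwinding of definitions, and it automatically gives Freiman isomorphisms of every order simultaneously, as claimed.
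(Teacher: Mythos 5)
Your proof is correct and follows essentially the same route as the paper: both arguments rest on the fact that a graph automorphism commutes with the Laplacian and hence induces an automorphism of $\Pic(G)$, with the base-point discrepancy cancelling because the constant ($mt$ in your notation, $mv_0$ in the paper's) appears on both sides of every $m$-fold relation. Your version merely makes explicit the decomposition of $\psi_*$ into a group automorphism followed by a translation, and adds the (routine) bijectivity check that the paper leaves implicit.
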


\begin{proof}
We have
\[
\sum_{i=1}^m (v_{j_i} - v_0) \sim \sum_{i=1}^m (v_{j'_i} - v_0) \iff \sum_{i=1}^m v_{j_i} \sim \sum_{i=1}^m v_{j'_i}.
\]
Similarly,
\[
\sum_{i=1}^m (\psi(v_{j_i}) - v_0) \sim \sum_{i=1}^m (\psi(v_{j'_i}) - v_0) \iff \sum_{i=1}^m \psi(v_{j_i}) \sim \sum_{i=1}^m \psi(v_{j'_i}).
\]
Because $\psi$ is an automorphism, we have
\[
\sum_{i=1}^m v_{j_i} \sim \sum_{i=1}^m v_{j'_i} \iff \sum_{i=1}^m \psi(v_{j_i}) \sim \sum_{i=1}^m \psi(v_{j'_i})
\]
and the result follows.
\end{proof}

\section{Outerplanar Graphs}
\label{Sec:Outerplanar}

\subsection{Outerplanar Graphs}
We now turn our attention to a specific family of graphs.  A graph $G$ is called \emph{outerplanar} if it can be embedded in the plane in such a way that all vertices belong to a single face.  An outerplanar graph is called \emph{maximal} if it is simple, and adding an edge between any two non-adjacent vertices results in a non-outerplanar graph.  Equivalently, a simple outerplanar graph on $n+1$ vertices is maximal if and only if it has $2n-1$ edges.  Note that the first Betti number of a maximal outerplanar graph is $n-1$.

Two examples of maximal outerplanar graphs that we will discuss in this paper are the \emph{fan graphs}, pictured in Figure~\ref{Fig:Fan} and the \emph{strip graphs}, pictured in Figure~\ref{Fig:Strip}.  The \emph{fan graph} $\mathcal{F}_n$ is the graph with $n+1$ vertices, and edges between $v_0$ and $v_i$ for all $i \geq 1$, and between $v_i$ and $v_j$ if $\vert i-j \vert = 1$ for all $i,j \geq 1$.  See Figure~\ref{Fig:Fan}.

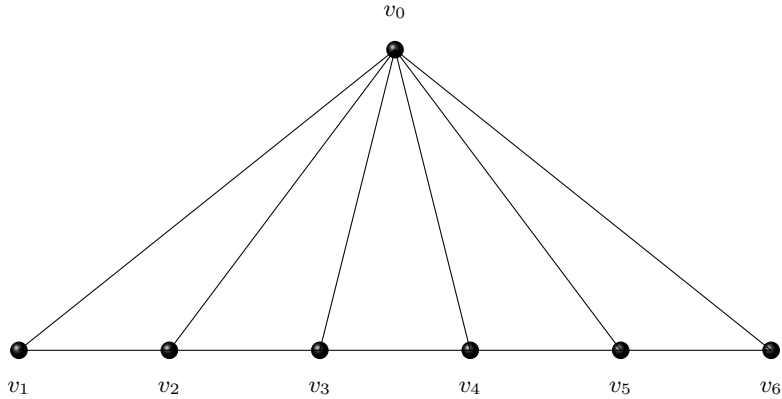
\begin{figure}[h]
\begin{tikzpicture}[scale=2.0]

\draw [ball color=black] (0,0) circle (0.55mm);
\draw [ball color=black] (1,0) circle (0.55mm);
\draw [ball color=black] (2,0) circle (0.55mm);
\draw [ball color=black] (3,0) circle (0.55mm);
\draw [ball color=black] (4,0) circle (0.55mm);
\draw [ball color=black] (5,0) circle (0.55mm);
\draw [ball color=black] (2.5,2) circle (0.55mm);
\draw (0,0)--(1,0);
\draw (1,0)--(2,0);
\draw (2,0)--(3,0);
\draw (3,0)--(4,0);
\draw (4,0)--(5,0);
\draw (2.5,2)--(0,0);
\draw (2.5,2)--(1,0);
\draw (2.5,2)--(2,0);
\draw (2.5,2)--(3,0);
\draw (2.5,2)--(4,0);
\draw (2.5,2)--(5,0);
\draw (2.5,2.25) node {{\small $v_0$}};
\draw (0,-0.25) node {{\small $v_1$}};
\draw (1,-0.25) node {{\small $v_2$}};
\draw (2,-0.25) node {{\small $v_3$}};
\draw (3,-0.25) node {{\small $v_4$}};
\draw (4,-0.25) node {{\small $v_5$}};
\draw (5,-0.25) node {{\small $v_6$}};

\end{tikzpicture}
\caption{The fan graph $\mathcal{F}_6$.}
\label{Fig:Fan}
\end{figure}

The \emph{strip graph} $G_n$ is the graph with $n+1$ vertices and edges between $v_i$ and $v_j$ if $\vert i-j \vert \in \{1,2\}$.  See Figure~\ref{Fig:Strip}.

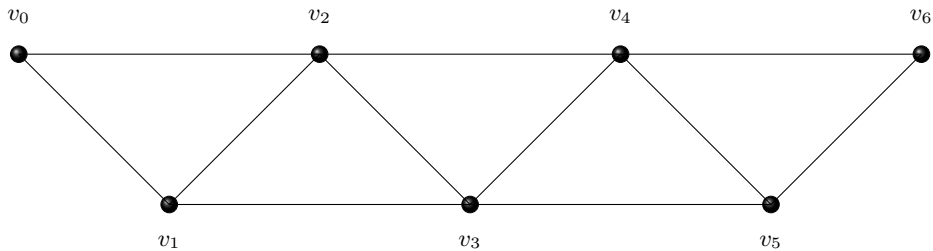
\begin{figure}[h]
\begin{tikzpicture}[scale=2.0]

\draw [ball color=black] (0,1) circle (0.55mm);
\draw [ball color=black] (1,0) circle (0.55mm);
\draw [ball color=black] (2,1) circle (0.55mm);
\draw [ball color=black] (3,0) circle (0.55mm);
\draw [ball color=black] (4,1) circle (0.55mm);
\draw [ball color=black] (5,0) circle (0.55mm);
\draw [ball color=black] (6,1) circle (0.55mm);
\draw (0,1)--(1,0);
\draw (2,1)--(1,0);
\draw (2,1)--(3,0);
\draw (4,1)--(3,0);
\draw (4,1)--(5,0);
\draw (6,1)--(5,0);
\draw (0,1)--(6,1);
\draw (1,0)--(5,0);
\draw (0,1.25) node {{\small $v_0$}};
\draw (1,-0.25) node {{\small $v_1$}};
\draw (2,1.25) node {{\small $v_2$}};
\draw (3,-0.25) node {{\small $v_3$}};
\draw (4,1.25) node {{\small $v_4$}};
\draw (5,-0.25) node {{\small $v_5$}};
\draw (6,1.25) node {{\small $v_6$}};

\end{tikzpicture}
\caption{The strip graph $G_6$.}
\label{Fig:Strip}
\end{figure}

\subsection{The Jacobian of an Outerplanar Graph}
\label{Sec:OuterplanarJacobian}

Throughout, we let $F_n$ denote the $n$th Fibonacci number, indexed so that $F_0 = 0$ and $F_1 = 1$.  In \cite{Slater77}, Slater shows that, if $G$ is a maximal outerplanar graph with $n+1$ vertices, exactly 2 of which have valence 2, then $\kappa (G) = F_{2n}$.  In addition, we will show that the Jacobian of any such graph is cyclic.  We first need the following preliminary lemma.

\begin{lemma}
\label{Lem:OuterplanarKappa2}
Let $G$ be a maximal outerplanar graph with $n+1$ vertices, exactly 2 of which have valence 2.  Let $v_0$ be a vertex of valence 2 and $v_1$ a vertex adjacent to $v_0$.  Then $\kappa_{1,1} (G) = F_{2n-1}$.
\end{lemma}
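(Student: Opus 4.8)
The plan is to use the fact that a maximal outerplanar graph with exactly two valence-two vertices is a ``triangle strip''. Realizing $G$ as a triangulated polygon, its valence-two vertices are precisely the apexes of the ears of the triangulation, and these correspond to the leaves of the dual tree of the triangulation; so having exactly two of them is equivalent to the dual tree being a path. Hence $G$ is obtained from a single triangle by repeatedly attaching a new triangle along a boundary edge of the most recently attached triangle. Since $G$ has exactly two valence-two vertices it is not itself a triangle, so $n \geq 3$. After relabeling we may assume $v_0$ is the apex of the last attached triangle $T$, with neighbors $v_1$ and $v_2$; because $v_0$ has valence two, the edges $v_0 v_1$ and $v_0 v_2$ lie on the outer boundary, so $\{v_1, v_2\}$ is the interior edge of $T$ and $G \setminus v_0$ is the strip with one fewer triangle.

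The main work is a coupled recursion. For a triangle strip $H$ with a distinguished boundary edge $e = \{x, y\}$ of its last triangle, let $q(H, e)$ denote the number of spanning forests of $H$ with exactly two components, one containing $x$ and the other containing $y$. Attaching a new triangle on vertices $x, y, z$ along $e$ and distinguishing one of the new boundary edges, say $e' = \{x, z\}$, produces a strip $H'$, and I would show, by splitting into cases according to which of the edges $xz$, $yz$ the relevant forest uses, that
\[
\kappa(H') = 2\kappa(H) + q(H,e), \qquad q(H',e') = \kappa(H) + q(H,e);
\]
the second identity is symmetric in $x$ and $y$, hence independent of which of the two new boundary edges is chosen, so the zigzag pattern of the strip is irrelevant. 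Starting from a single triangle, where $\kappa = 3 = F_4$ and $q = 2 = F_3$, a simultaneous induction using $F_{2k+2} = 2F_{2k} + F_{2k-1}$ and $F_{2k+1} = F_{2k} + F_{2k-1}$ gives $\kappa = F_{2k+2}$ (recovering Slater's count) and $q = F_{2k+1}$ for a strip of $k$ triangles.

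To conclude, note that with $v_0$ as base vertex, $\kappa_{1,1}(G)$ is by definition the number of two-component spanning forests of $G$ separating $v_0$ from $v_1$, i.e.\ $q(G, \{v_0, v_1\})$, where $\{v_0, v_1\}$ is a boundary edge of the last triangle of the strip. Since $G$ has $n+1$ vertices it is a strip of $n-1$ triangles, so $q(G, \{v_0, v_1\}) = F_{2(n-1)+1} = F_{2n-1}$. Equivalently, the attaching identity gives $\kappa_{1,1}(G) = \kappa(G \setminus v_0) + q(G \setminus v_0, \{v_1, v_2\}) = F_{2n-2} + F_{2n-3} = F_{2n-1}$.

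I expect the decisive step to be the case analysis behind the two displayed identities: for each subset of $\{xz, yz\}$ one must check that the forests restricted to $H$ are exactly the spanning trees of $H$, or exactly the two-component spanning forests of $H$ separating $x$ and $y$, with no overcounting, and that deleting or adjoining $z$ as a leaf respects the ``separating'' condition correctly. A second point requiring a careful statement (or a citation) is the structural reduction in the first paragraph: that the valence-two vertices of a maximal outerplanar graph are exactly the ear apexes, so that having exactly two of them forces the dual tree of the triangulation to be a path.
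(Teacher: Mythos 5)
Your proof is correct, but it takes a genuinely different route from the paper's. The paper first converts $\kappa_{1,1}(G)$ into the number of spanning trees of $G$ containing the edge $v_0v_1$, and then counts these by complementary counting: it cites Slater's result as a black box twice, for $\kappa(G) = F_{2n}$ and for $\kappa(G') = F_{2n-2}$ where $G' = G \smallsetminus v_0$ (every spanning tree omitting $v_0v_1$ must use the other edge at $v_0$ and restricts to a spanning tree of $G'$), giving $F_{2n} - F_{2n-2} = F_{2n-1}$ in a few lines. You instead set up a self-contained coupled recursion on the triangle strip, tracking the pair $(\kappa(H),\, q(H,e))$ as triangles are attached; your two transfer identities and the case analysis behind them check out (the vertex $z$ has degree $2$ in $H'$, so conditioning on which of $xz$, $yz$ a spanning tree or separating forest uses gives exactly the stated counts, with the degenerate case that $z$ is an isolated component contributing the $\kappa(H)$ term in the second identity), and the symmetry in $x$ and $y$ correctly disposes of the zigzag ambiguity. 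Your approach costs more setup but buys a self-contained re-derivation of Slater's count $\kappa(G)=F_{2n}$ along the way and yields $q$ for both boundary edges of the end triangle; the paper's is shorter precisely because it outsources the structural and enumerative content to the citation. The two points you flag as needing care are the right ones, and the second (that exactly two valence-two vertices forces the dual tree to be a path) is the same structural fact the paper implicitly relies on via Slater's Proposition 1, so either a short argument or that citation would close it.
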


\begin{proof}
The invariant $\kappa_{1,1} (G)$ counts the number of 2-component spanning forests such that $v_0$ is in one component and $v_1$ is in the other.  Such forests are in bijection with spanning trees containing the edge from $v_0$ to $v_1$.  Specifically, given a spanning tree containing this edge, remove it to obtain a 2-component spanning forest such that $v_0$ is in one component and $v_1$ is in the other.  This operation is clearly invertible -- given a 2-component spanning forest such that $v_0$ is in one component and $v_1$ is in the other, add the edge from $v_0$ to $v_1$ to obtain a spanning tree.

Now, let $G'$ be the graph obtained by deleting the two edges adjacent to $v_0$.  As shown in the proof of \cite[Proposition~1]{Slater77}, $G'$ is a maximal outerplanar graph on $n$ vertices, exactly 2 of which have valence 2. By \cite[Proposition~1]{Slater77}, we have $\kappa (G') = F_{2n-2}$.  It follows that the number of spanning trees in $G$ that do not contain the edge from $v_0$ to $v_1$ is $F_{2n-2}$, hence the number of spanning trees in $G$ that do contain this edge is $F_{2n} - F_{2n-2} = F_{2n-1}$.
\end{proof}

\begin{corollary}
\label{Cor:Iso}
Let $G$ be a maximal outerplanar graph with $n+1$ vertices, exactly 2 of which have valence 2.  Let $v_0$ be a vertex of valence 2 and $v_1$ a vertex adjacent to $v_0$.
The map $\varphi \colon \Jac(G) \to \Z/F_{2n}\Z \subset \Q/\Z$ given by
\[
\varphi (D) = \langle D, v_1-v_0 \rangle \pmod{\Z}
\]
is an isomorphism.  In particular, $\Jac(G) \cong \Z/F_{2n}\Z$.
\end{corollary}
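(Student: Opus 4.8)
The plan is to show that $\varphi$ is an injective group homomorphism and that its image is forced, by cardinality, to be the entire order-$F_{2n}$ subgroup $\Z/F_{2n}\Z$ of $\Q/\Z$. First I would note that $\varphi$ is a well-defined homomorphism of abelian groups: the class $v_1 - v_0$ lies in $\Jac(G)$ since it has degree zero, $v_1 \neq v_0$ because $v_1$ is adjacent to $v_0$ and $G$ has no loops, and $\varphi(D) = [D]^T L [v_1-v_0] \pmod{\Z}$ is visibly $\Z$-linear in $[D]$.

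Next I would compute the single value $\varphi(v_1 - v_0)$. By Theorem~\ref{Thm:Minors}, $\varphi(v_1 - v_0) = \langle v_1 - v_0, v_1 - v_0\rangle = \kappa_{1,1}(G)/\kappa(G) \pmod{\Z}$. Since $v_0$ has valence $2$ and $G$ has exactly two vertices of valence $2$, Lemma~\ref{Lem:OuterplanarKappa2} applies and gives $\kappa_{1,1}(G) = F_{2n-1}$; meanwhile Slater's count \cite[Proposition~1]{Slater77} together with the matrix tree theorem gives $\kappa(G) = F_{2n} = |\Jac(G)|$. Hence $\varphi(v_1 - v_0) = F_{2n-1}/F_{2n} \pmod{\Z}$.

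The key arithmetic input is that consecutive Fibonacci numbers are coprime, so $\gcd(F_{2n-1}, F_{2n}) = 1$ and $F_{2n-1}/F_{2n}$ has order exactly $F_{2n}$ in $\Q/\Z$. Thus the image of $\varphi$ contains the cyclic group generated by this element, namely $\tfrac{1}{F_{2n}}\Z/\Z \cong \Z/F_{2n}\Z$, so $|\varphi(\Jac(G))| \geq F_{2n}$. On the other hand $\varphi(\Jac(G))$ is a quotient of $\Jac(G)$, so $|\varphi(\Jac(G))| \leq |\Jac(G)| = F_{2n}$. Therefore $\varphi$ is injective with image exactly the unique order-$F_{2n}$ subgroup $\Z/F_{2n}\Z$ of $\Q/\Z$, which is the desired isomorphism. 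I do not expect a genuine obstacle here: all the combinatorial content is already packaged in Lemma~\ref{Lem:OuterplanarKappa2} and Slater's theorem, and the only new ingredient is the coprimality of $F_{2n-1}$ and $F_{2n}$, which upgrades ``the image has maximal possible order'' to injectivity via $|\Jac(G)| = F_{2n}$; the one place to stay careful is simply checking that $\varphi$ is a homomorphism and that $v_1 - v_0$ is a genuine element of $\Jac(G)$.
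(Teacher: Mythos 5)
Your proposal is correct and follows essentially the same route as the paper: compute $\varphi(v_1-v_0) = F_{2n-1}/F_{2n}$ via Theorem~\ref{Thm:Minors} and Lemma~\ref{Lem:OuterplanarKappa2}, use the coprimality of consecutive Fibonacci numbers to see the image contains the full cyclic subgroup of order $F_{2n}$, and conclude by comparing with $|\Jac(G)| = \kappa(G) = F_{2n}$. The extra remarks on well-definedness and linearity are fine but not needed beyond what the paper takes for granted.
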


\begin{proof}
Combining Theorem~\ref{Thm:Minors} with Lemma~\ref{Lem:OuterplanarKappa2}, we have
\[
\varphi (v_1-v_0) = \frac{\kappa_{1,1}(G)}{\kappa(G)} = \frac{F_{2n-1}}{F_{2n}} \in \Q/\Z .
\]
Since $F_{2n-1}$ and $F_{2n}$ are relatively prime, this element generates the cyclic subgroup of order $F_{2n}$ in $\Q/\Z$, hence $\varphi$ maps $\Jac(G)$ onto this cyclic subgroup.  Since, by \cite[Proposition~1]{Slater77}, we have $\vert \Jac(G) \vert = \kappa(G) = F_{2n}$, it follows that $\varphi$ is an isomorphism onto this subgroup.
\end{proof}

\subsection{Bounds on the Gonality of Outerplanar Graphs}
\label{Sec:OuterplanarBounds}

In the next sections, we will discuss the gonality of certain families of outerplanar graphs.  Here, we note that bounds in the existing literature are insufficient for computing the gonality of these graphs.  In \cite{deBruynGijswijt}, it is shown that a well-known graph invariant, the \emph{treewidth}, is a lower bound on gonality.  Outerplanar graphs, however, have treewidth at most 2, and as we shall see in the later sections, the gonality is often much higher.

\begin{lemma}
\label{Lem:Treewidth}
Let $G$ be an outerplanar graph.  Then $\mathrm{tw} (G) \leq 2$.
\end{lemma}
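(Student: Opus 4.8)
The plan is to show that outerplanar graphs have treewidth at most $2$ by exhibiting, for any outerplanar $G$, a tree decomposition of width $2$, equivalently (and more conveniently) by showing that every outerplanar graph is a subgraph of a $2$-tree, i.e.\ a series-parallel graph. Since treewidth is monotone under taking subgraphs, it suffices to handle the maximal outerplanar case, so I would first reduce to the situation where $G$ is a maximal outerplanar graph on $n+1$ vertices; adding edges only makes the treewidth larger, so a bound for the maximal case gives the bound in general. (If $G$ is not connected or not simple, one passes to the simple graph on the same vertex set and treats each block separately, since treewidth is the max over blocks.)

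The key structural fact I would invoke is that a maximal outerplanar graph is a triangulation of a polygon: its outer face is a cycle $v_{i_0} v_{i_1} \cdots v_{i_n} v_{i_0}$ through all the vertices, and all interior faces are triangles. Such a triangulated polygon always contains an \emph{ear}, that is, a vertex $u$ of valence $2$ whose two neighbors $a,b$ are adjacent. The plan is then an induction on the number of vertices: remove the ear vertex $u$ to obtain a maximal outerplanar graph $G'$ on one fewer vertex, take a width-$2$ tree decomposition $(T', \{X_t\})$ of $G'$ by induction, locate a bag $X_{t_0}$ containing both $a$ and $b$ (such a bag exists because $ab \in E(G')$ and every edge's endpoints lie together in some bag), and attach a new node $t_u$ to $t_0$ with bag $X_{t_u} = \{u, a, b\}$. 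One checks the three tree-decomposition axioms: $u$ now appears, the edges $ua$ and $ub$ are covered, the connectivity condition holds because $t_u$ is a leaf and $u$ appears nowhere else, and all bags still have size at most $3$, so the width is still $2$. The base case is a triangle (or smaller), which has a single bag of size $3$.

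I expect the main obstacle to be pinning down the two structural inputs cleanly rather than the tree-decomposition bookkeeping, which is routine. Specifically, I would want to justify carefully (i) that a maximal outerplanar graph really is an edge-maximal triangulation of its outer cycle — this is standard but deserves a sentence citing the edge count $2n-1$ already noted in the paper, together with Euler's formula to force all bounded faces to be triangles — and (ii) the existence of an ear, i.e.\ a degree-$2$ vertex whose neighbors are adjacent. The ear fact can be extracted from the triangulation structure: in a polygon triangulation with at least two triangles, at least two of the triangles are "ears" (have two polygon edges on their boundary), which is itself a small induction or a counting argument on the dual tree of the triangulation. Once these are in hand, the induction goes through immediately. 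An alternative, if one prefers to avoid even this much combinatorial geometry, is simply to cite the classical fact that outerplanar graphs are exactly the graphs with no $K_4$ or $K_{2,3}$ minor and that such graphs are series-parallel, hence have treewidth $\le 2$; but the self-contained ear-decomposition argument above is short enough that I would include it.
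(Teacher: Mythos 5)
Your proposal is correct, but it takes a genuinely different route from the paper. The paper's proof is a two-line citation of forbidden-minor characterizations: outerplanar graphs are exactly the graphs with no $K_4$ or $K_{2,3}$ minor, graphs of treewidth at most $2$ are exactly those with no $K_4$ minor, and the implication is immediate --- this is precisely the ``alternative'' you mention in your last sentence. Your main argument instead constructs an explicit width-$2$ tree decomposition by induction: reduce to the maximal outerplanar case, use the fact that a maximal outerplanar graph is a triangulated polygon and hence has an ear (a valence-$2$ vertex whose neighbors are adjacent), delete the ear, and graft a bag $\{u,a,b\}$ onto a bag of the inductive decomposition containing the edge $ab$. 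This is the standard proof that maximal outerplanar graphs are $2$-trees, and all the steps you flag as needing care (the Euler-formula argument that bounded faces are triangles, the existence of at least two ears via the dual tree, the subgraph-monotonicity of treewidth, and the harmlessness of loops and parallel edges) are indeed the right things to check and all go through. What your approach buys is a self-contained, constructive decomposition one could actually use downstream; what the paper's approach buys is brevity, at the cost of importing two classical structure theorems as black boxes. Either is a complete proof of the lemma.
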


\begin{proof}
Both outerplanar graphs and graphs of treewidth at most 2 have forbidden minor characterizations.  Specifically, a graph is outerplanar if and only if it has neither of the forbidden minors $K_4$ nor $K_{2,3}$ \cite[Exercise~4.23]{Diestel18}.  Similarly, a graph has treewidth at most 2 if and only if it does not have the forbidden minor $K_4$ \cite{Bodlaender98}.  It follows that the treewidth of an outerplanar graph is at most 2.
\end{proof}

A divisor $D = \sum_{i=0}^n a_i \cdot v_i$ on a graph is \emph{multiplicity-free} if $a_i$ is equal to either 0 or 1 for all $i$.  In \cite{DEM21}, Dean, Everett, and Morrison define the \emph{multiplicity-free gonality} $\mathrm{mfgon}(G)$ of a graph $G$ to be the minimum degree of a multiplicity-free divisor of positive rank.  Of course, the multiplicity-free gonality is an upper bound on the gonality.  For maximal outerplanar graphs, however, the gonality is typically much smaller than the multiplicity-free gonality.  Recall that an \emph{independent set} in a graph $G$ is a set of vertices, no two of which are adjacent.  The \emph{independence number} $\alpha (G)$ is the maximal size of an independent set.

\begin{lemma}
\label{Lem:MFGon}
Let $G$ be a simple planar graph on $n+1$ vertices, with all of its faces triangles, except for possibly the outer face.  Then $\mathrm{mfgon} (G) = n+1 - \alpha (G)$.   In particular, if $G$ is maximal outerplanar, then $\mathrm{mfgon} (G) \geq \frac{n}{2}$.
\end{lemma}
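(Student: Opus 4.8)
The plan is to first establish the exact formula $\mathrm{mfgon}(G) = n+1 - \alpha(G)$ for a simple planar graph all of whose bounded faces are triangles, and then derive the inequality $\mathrm{mfgon}(G) \geq \tfrac{n}{2}$ for maximal outerplanar graphs as a consequence. For the equality, I would argue both inequalities separately. For the upper bound $\mathrm{mfgon}(G) \leq n+1-\alpha(G)$: let $I$ be a maximum independent set, and consider the multiplicity-free divisor $D = \sum_{v_i \notin I} v_i$, which has degree $n+1-\alpha(G)$. I claim $D$ has positive rank. The natural way to see this is via Dhar's burning algorithm (or directly): I want to show that for every vertex $v_j \in I$, the divisor $D$ is equivalent to an effective divisor that dominates $v_j$ --- equivalently, that $D - v_j$ is equivalent to an effective divisor. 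Since $I$ is independent, every neighbor of $v_j$ carries a chip in $D$; firing the complement of $\{v_j\}$ (i.e. letting all vertices except $v_j$ fire once) moves at least $\val(v_j)$ chips onto $v_j$ while each vertex $v_i \neq v_j$ loses $\val(v_i)$ chips and gains one from each neighbor; since $G$ is connected this can be made to produce an effective divisor of the same degree with a chip on $v_j$. (One can also phrase this cleanly by checking that $v_0$, after reordering so the base vertex lies in $I$, is $q$-reduced or by a short Dhar's-algorithm argument: the fire started at $v_j$ cannot spread because $v_j$'s neighbors are all loaded.) If $v_j \notin I$, then $D$ already dominates $v_j$. Hence $D$ has rank $\geq 1$.

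For the lower bound $\mathrm{mfgon}(G) \geq n+1-\alpha(G)$: suppose $D$ is a multiplicity-free divisor of positive rank and let $S = \{v_i : a_i = 0\}$ be the support of its complement. I want to show $S$ is an independent set, which gives $|S| \leq \alpha(G)$ and hence $\deg D = n+1-|S| \geq n+1-\alpha(G)$. Suppose for contradiction that $v_i, v_j \in S$ are adjacent. The key input here is the hypothesis that all bounded faces are triangles: the edge $v_iv_j$ lies on at least one triangle $v_iv_jv_k$ (if the edge is on two bounded faces, two such triangles). I would run Dhar's burning algorithm starting the fire at the empty vertices $v_i$ and $v_j$: since they are adjacent and empty, the fire spreads between them and then, because each unburnt neighbor needs as many burnt neighbors as chips-plus-one to resist, the triangle structure forces the fire to propagate to $v_k$ and then outward, eventually burning the whole graph --- showing $D$ is not $v_i$-reduced with positive rank, the desired contradiction. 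The precise combinatorial claim to nail down is: \emph{in a graph where every edge lies on a triangle, if the empty set of a multiplicity-free positive-rank divisor contained an edge, Dhar's algorithm from that edge burns everything}; this is where the "faces are triangles" hypothesis does its work and is the main obstacle of the proof.

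Finally, for the "in particular" clause: if $G$ is maximal outerplanar on $n+1$ vertices, all its bounded faces are triangles (this is the standard triangulation-of-a-polygon picture for maximal outerplanar graphs), so the equality applies and $\mathrm{mfgon}(G) = n+1 - \alpha(G)$. It remains to bound $\alpha(G) \leq \tfrac{n+2}{2}$, i.e. $\alpha(G) \leq \lceil \tfrac{n+1}{2} \rceil$. This follows because a maximal outerplanar graph on $n+1 \geq 3$ vertices has a Hamiltonian cycle (its outer boundary is a cycle through all vertices), and the independence number of a cycle $C_{n+1}$ is $\lfloor \tfrac{n+1}{2} \rfloor$; since adding chords only decreases the independence number, $\alpha(G) \leq \lfloor \tfrac{n+1}{2} \rfloor \leq \tfrac{n+1}{2}$. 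Therefore $\mathrm{mfgon}(G) = n+1-\alpha(G) \geq n+1 - \tfrac{n+1}{2} = \tfrac{n+1}{2} \geq \tfrac{n}{2}$, as claimed.
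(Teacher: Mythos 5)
Your proposal follows essentially the same route as the paper: the divisor $D = \sum_{v_i \notin I} v_i$ for the upper bound, and Dhar's burning algorithm launched from a pair of adjacent chip-free vertices for the lower bound. Your upper-bound argument (firing $V \smallsetminus \{v_j\}$, which strips one chip from each neighbour of $v_j \in I$ and deposits $\val(v_j)$ chips on $v_j$) is correct and is in fact more self-contained than the paper, which simply cites \cite[Lemma~2.4]{DEM21} at this point. The final step via the Hamiltonian cycle and $\alpha(G) \leq \lfloor \frac{n+1}{2}\rfloor$ also matches the paper.

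The one genuine gap is precisely the step you label ``the main obstacle'': you assert, but do not prove, that once the fire occupies an edge it consumes the whole graph. This is where the face hypothesis enters, and it closes quickly, so you should not leave it open. Since $D$ is multiplicity-free, any unburnt vertex with two burnt neighbours burns. In a $2$-connected plane graph each edge borders two distinct faces, so at least one of them is bounded and hence, by hypothesis, a triangle; thus whenever both endpoints of an edge are burnt, the third vertex of a triangle containing that edge burns too, and the burnt set absorbs entire bounded faces one at a time. Because the bounded faces of such a graph are connected under sharing edges (for a maximal outerplanar graph the weak dual is a tree) and cover every vertex, induction on the number of burnt faces shows the whole graph burns. (The paper's own proof compresses this into one sentence; note that both arguments implicitly use $2$-connectivity, which holds for maximal outerplanar graphs with $n \geq 2$.) One small correction of wording: the conclusion of the burning argument is that $D$ \emph{is} $v_i$-reduced, with no chip at $v_i$, and therefore fails to have positive rank --- not that $D$ ``is not $v_i$-reduced with positive rank.''
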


\begin{proof}
By \cite[Lemma~2.4]{DEM21}, if $S \subset V(G)$ is an independent set, then
\[
D = \sum_{v_i \notin S} v_i
\]
is a multiplicity-free divisor of positive rank.  It follows that $\mathrm{mfgon} (G) \leq n+1 - \alpha (G)$.

Now, let $D$ be a multiplicity-free divisor of degree less than $n+1-\alpha(G)$.  Then there exists a pair of adjacent vertices $v$ and $w$ so that neither $v$ nor $w$ is in the support of $D$.  Now, run Dhar's burning algorithm starting at $v$.  By assumption, any pair of adjacent vertices is contained in a triangle, and since $D$ has at most 1 chip on each vertex, if two vertices of the triangle burn, then so does the third.  It follows by induction that every vertex burns, hence $D$ is $v$-reduced.  Since $v$ is not in the support of $D$, it follows that $D$ does not have positive rank.

For the final statement, note that a maximal outerplanar graph contains a Hamiltonian cycle, so the independence number $\alpha(G)$ is less than or equal to that of the cycle, which is $\lceil \frac{n}{2} \rceil$.
\end{proof}

\section{Fan Graphs}
\label{Sec:Fan}

The set $\cA(\mathcal{F}_n)$ has a particularly nice description.

\begin{lemma}
\label{Lem:Fibonacci}
For all $1 \leq k \leq n$, we have $\kappa_{1,k}(\mathcal{F}_n) = F_{2n-2k+1}$.
\end{lemma}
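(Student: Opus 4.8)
The plan is to compute $\kappa_{1,k}(\mathcal{F}_n)$ directly, using the spanning-forest interpretation from Theorem~\ref{Thm:Minors}: $\kappa_{1,k}(\mathcal{F}_n)$ counts $2$-component spanning forests of $\mathcal{F}_n$ in which one component contains the hub $v_0$ and the other contains both $v_1$ and $v_k$. The first observation is that, because $v_0$ is adjacent to every other vertex, such a forest is obtained from a spanning tree by deleting exactly one edge incident to $v_0$; conversely, deleting the edge $v_0 v_j$ from a spanning tree separates $v_0$ from the rest precisely when $v_0$ has valence $1$ in that tree, i.e.\ when the tree uses exactly one hub-edge, namely $v_0 v_j$. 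So I would first reduce to counting spanning trees of $\mathcal{F}_n$ that use exactly one hub-edge $v_0 v_j$, subject to the constraint that in the induced path-forest on $v_1,\dots,v_n$ the vertices $v_1$ and $v_k$ lie in the same component as $v_j$ — equivalently, after deleting $v_0 v_j$ we need $v_1,v_k,v_j$ all in the same component of the remaining graph.

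Next I would analyze the structure explicitly. A spanning tree of $\mathcal{F}_n$ using only the single hub-edge $v_0 v_j$ restricts to a spanning tree of the path $v_1 - v_2 - \cdots - v_n$, which is the whole path; but that always keeps $v_1,\dots,v_n$ connected, so the "same component" condition is automatic and this count is just the number of spanning trees using exactly one hub-edge. That is too many, so I must have mis-set up the bijection: the correct statement is that a $2$-component forest with $v_0$ alone is a spanning tree of $\mathcal{F}_n \setminus v_0$ — i.e.\ of the path on $v_1,\dots,v_n$ — which is unique, giving $\kappa_{1,1}=1$, wrong again. The resolution is that the two components need not be $\{v_0\}$ and the rest: the component containing $v_0$ may include several of the $v_i$. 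So the right approach is: a $2$-component spanning forest is a spanning tree of $\mathcal{F}_n$ with one edge removed. Removing an edge $e$ from a spanning tree $T$ gives components $T_1 \ni v_0$ and $T_2$; the condition is $v_1, v_k \in T_2$. I would therefore count pairs $(T, e)$ with $e \in T$ and $v_1, v_k$ on the far side of $e$ from $v_0$; equivalently (re-adding $e$) count spanning trees $T$ of $\mathcal{F}_n$ together with an edge $e \in T$ whose removal puts $v_1, v_k, $ and $v_0$ in different-then-specified parts — but since a forest of this type is recovered from the tree $T = (\text{forest}) + (\text{the unique edge joining the two components across } v_0)$, it is cleanest to go back to the forest directly.

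Concretely, then, I would set up the count on forests: the component $C_0 \ni v_0$ and the component $C \ni v_1, v_k$ partition $V(\mathcal{F}_n)$, each induced subgraph is a tree. Since $v_0$ is universal, $C_0 \cap \{v_1,\dots,v_n\}$ must be an interval-free configuration that stays a tree when joined through $v_0$; in fact $C_0$ restricted to the path vertices is an arbitrary independent-of-intervals subset, while $C$ is a sub-path-forest on the complementary vertices that is connected and contains $v_1$ and $v_k$. I expect this to unwind into a product/convolution of Fibonacci-type counts: the number of ways to choose, on each side of the segment $[1,k]$ and in the gaps, a linear arrangement, governed by the recursion $a_m = a_{m-1} + a_{m-2}$ with Fibonacci solutions — yielding $F_{2n-2k+1}$. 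The main obstacle, and the step I would spend the most care on, is pinning down exactly which sub-configurations of the path on $v_1, \dots, v_n$ extend (through the hub $v_0$) to a spanning forest of the required shape, and then showing the resulting count telescopes to $F_{2n-2k+1}$ — most likely by exhibiting a bijection with spanning trees of a smaller fan graph $\mathcal{F}_{n-k}$-like object and invoking Slater's formula $\kappa(\mathcal{F}_m) = F_{2m}$ together with the edge-deletion trick already used in Lemma~\ref{Lem:OuterplanarKappa2}. An alternative, possibly cleaner route is purely linear-algebraic: compute the relevant cofactor $C_{1,k}$ of the reduced Laplacian $\widetilde{\Delta}$ of $\mathcal{F}_n$ directly, recognizing $\widetilde{\Delta}$ as a tridiagonal-plus-rank-one matrix whose minors satisfy the Fibonacci recursion; I would likely present whichever of these two computations is shorter.
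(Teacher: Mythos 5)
Your proposal never actually closes the argument: after two explicitly acknowledged false starts, you arrive at the correct setup (count $2$-component spanning forests with $C_0 \ni v_0$ and $C \ni v_1, v_k$), but then you describe the decisive step --- determining which configurations on the path $v_1, \ldots, v_n$ can occur, and why the count equals $F_{2n-2k+1}$ --- as ``the main obstacle'' and leave it as a guess about a convolution of Fibonacci counts or a reduction to an ``$\mathcal{F}_{n-k}$-like object.'' The missing idea is a one-line structural observation: since $\mathcal{F}_n \smallsetminus \{v_0\}$ is a path, the \emph{only} path from $v_1$ to $v_k$ avoiding $v_0$ is $v_1 - v_2 - \cdots - v_k$, so the component $C$ (which is a tree not containing $v_0$) must contain every edge of that path, and in particular all of $v_1, \ldots, v_k$. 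Deleting $v_1, \ldots, v_{k-1}$ then gives a bijection between the forests counted by $\kappa_{1,k}(\mathcal{F}_n)$ and those counted by $\kappa_{1,1}(\mathcal{F}_{n-k+1})$, and the latter equals $F_{2(n-k+1)-1} = F_{2n-2k+1}$ by Lemma~\ref{Lem:OuterplanarKappa2}. (Note your target is $\mathcal{F}_{n-k+1}$, not $\mathcal{F}_{n-k}$; the off-by-one suggests the reduction was not carried out.)

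Two further points. First, you need the base case $k=1$ separately: $\kappa_{1,1}(\mathcal{F}_n) = F_{2n-1}$ follows from Lemma~\ref{Lem:OuterplanarKappa2} only after observing that $\kappa_{1,1}$ is symmetric under exchanging the labels of $v_0$ and $v_1$ (in $\mathcal{F}_n$ it is $v_1$, not $v_0$, that has valence $2$), and your sketch does not address this. Second, several intermediate assertions in your write-up are not correct as stated --- for instance, that ``$C_0$ restricted to the path vertices is an arbitrary independent-of-intervals subset''; in fact $C_0$ can meet $\{v_1, \ldots, v_n\}$ only in $\{v_{k+1}, \ldots, v_n\}$, and the structure there is exactly what the reduction to $\mathcal{F}_{n-k+1}$ packages up. The proposed linear-algebraic alternative (tridiagonal-plus-rank-one cofactors) could plausibly be made to work, but as written it is only a suggestion, not a proof.
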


\begin{proof}
By Lemma~\ref{Lem:OuterplanarKappa2}, we have $\kappa_{1,1} (\mathcal{F}_n) = F_{2n-1}$.  (Note that, in Lemma~\ref{Lem:OuterplanarKappa2}, it is $v_0$ rather than $v_1$ that has valence 2.  However, the number $\kappa_{1,1}$, which counts the number of 2-component spanning forests such that $v_0$ is in one component and $v_1$ is in the other, is invariant under switching the labels of $v_0$ and $v_1$.)

Now assume that $k \geq 2$.  We will show that $\kappa_{1,k} (\mathcal{F}_n) = \kappa_{1,1} (\mathcal{F}_{n-k+1})$.  From the previous paragraph, it then follows that $\kappa_{1,k} (\mathcal{F}_n) = F_{2n-2k+1}$.  Given a 2-component spanning forest, let $T_0$ denote the component containing $v_0$ and $T_1$ denote the component containing $v_1$ and $v_k$.  Then $T_1$ must contain the unique path from $v_1$ to $v_k$ that does not pass through $v_0$.  By deleting this path, we obtain a 2-component spanning forest in $\mathcal{F}_n \smallsetminus \{v_1, \ldots , v_{k-1} \} \cong \mathcal{F}_{n-k+1}$ such that one component contains $v_0$, and the other contains $v_k$.  This operation is clearly invertible, hence this yields a bijection between the two sets of 2-component spanning forests, and $\kappa_{1,k} (\mathcal{F}_n) = \kappa_{1,1} (\mathcal{F}_{n-k+1})$.
\end{proof}

Since, by Corollary~\ref{Cor:Iso}, the map $\varphi \colon \Jac(\mathcal{F}_n) \to \Z/F_{2n}\Z$ is an isomorphism, we may identify $\Jac(\mathcal{F}_n)$ with its image under $\varphi$.  By Lemma~\ref{Lem:FibProduct}, under this identification, we have the following.

\begin{corollary}
\label{Cor:FanSetA}
The set $\cA(\mathcal{F}_n)$ consists of 0 and all odd-index Fibonacci numbers between 1 and $F_{2n}$.  In other words,
\[
\cA(\mathcal{F}_n) = \{ 0 \} \cup \{ F_{2k-1} \mid 1 \leq k \leq n \} \subset \Z/F_{2n}\Z .
\]
\end{corollary}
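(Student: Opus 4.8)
The plan is to compute $\cA(\mathcal{F}_n)$ directly through the isomorphism $\varphi$ of Corollary~\ref{Cor:Iso}, which lets us replace each generator $v_i - v_0 \in \Jac(\mathcal{F}_n)$ by the rational number $\varphi(v_i - v_0) = \langle v_i - v_0, v_1 - v_0 \rangle \in \Q/\Z$. By Theorem~\ref{Thm:Minors} this pairing equals $\kappa_{1,i}(\mathcal{F}_n)/\kappa(\mathcal{F}_n) = \kappa_{1,i}(\mathcal{F}_n)/F_{2n}$, so the entire set $\cA(\mathcal{F}_n)$ can be read off from the integers $\kappa_{1,i}(\mathcal{F}_n)$.

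First I would dispose of the base vertex: $\varphi(v_0 - v_0) = 0$, which accounts for the $\{0\}$ in the statement. For $1 \leq i \leq n$, Lemma~\ref{Lem:Fibonacci} gives $\kappa_{1,i}(\mathcal{F}_n) = F_{2n-2i+1}$, so $\varphi(v_i - v_0) = F_{2n-2i+1}/F_{2n}$, which under the standard embedding $\Z/F_{2n}\Z \hookrightarrow \Q/\Z$ is precisely the class of the integer $F_{2n-2i+1}$. Hence, after identifying $\Jac(\mathcal{F}_n)$ with its image, $\cA(\mathcal{F}_n) = \{0\} \cup \{ F_{2n-2i+1} \mid 1 \leq i \leq n \} \subset \Z/F_{2n}\Z$.

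It then remains only to rewrite the index set. As $i$ runs over $1, \ldots, n$, the quantity $2n-2i+1$ runs over the odd integers $2n-1, 2n-3, \ldots, 3, 1$; substituting $k = n-i+1$ turns $\{ F_{2n-2i+1} \mid 1 \leq i \leq n \}$ into $\{ F_{2k-1} \mid 1 \leq k \leq n \}$, exactly as claimed. Finally, since $0 < F_1 < F_3 < \cdots < F_{2n-1} < F_{2n}$, these representatives are distinct nonnegative integers strictly less than $F_{2n}$, so the description is unambiguous and $\cA(\mathcal{F}_n)$ has exactly $n+1$ elements.

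This argument is essentially bookkeeping: the substantive input is Lemma~\ref{Lem:Fibonacci} together with Corollary~\ref{Cor:Iso}, both already established. The only points requiring a little care are the identification of the pairing value $F_{2n-2i+1}/F_{2n} \in \Q/\Z$ with the residue $F_{2n-2i+1} \pmod{F_{2n}}$ and the harmless reindexing of the odd Fibonacci indices, so I do not anticipate a genuine obstacle.
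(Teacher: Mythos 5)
Your argument is correct and is exactly the one the paper intends: the corollary is stated without proof as an immediate consequence of Lemma~\ref{Lem:Fibonacci}, Theorem~\ref{Thm:Minors}, and the identification of $\Jac(\mathcal{F}_n)$ with $\Z/F_{2n}\Z$ via $\varphi$, and you have simply made the bookkeeping (pairing values, embedding into $\Q/\Z$, reindexing) explicit. Nothing is missing.
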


Theorems~\ref{Thm:FreimanFan} and~\ref{Thm:FanGonality} follow immediately.

\begin{proof}[Proof of Theorem~\ref{Thm:FreimanFan}]
There is an involution of $\mathcal{F}_n$ that fixes $v_0$ and sends $v_k$ to $v_{n+1-k}$ for all $k \geq 1$.  Thus, by Proposition~\ref{Prop:Freiman} the involution $\iota \colon \cA(\mathcal{F}_n) \to \cA(\mathcal{F}_n)$ given by $\iota (0) = 0$ and $\iota (F_{2k-1}) = F_{2n-2k+1}$
is a Freiman isomorphism of arbitrary order.
\end{proof}

\begin{proof}[Proof of Theorem~\ref{Thm:FanGonality}]
By \cite[Theorem~11]{Hendrey18}, $\gon (\mathcal{F}_n) = \phi_n$, and by Corollary~\ref{Cor:Gonality}, we have
\[
\gon(\mathcal{F}_n) = \phi_n = \min \{ d \mid \exists x \in \Z/F_{2n}\Z \text{ such that } x-\cA(\mathcal{F}_n) \subseteq (d-1)\cA(\mathcal{F}_n) \} .
\]
Finally, by Corollary~\ref{Cor:FanSetA}, we have $x - \cA(\mathcal{F}_n) \subseteq (d-1)\cA(\mathcal{F}_n)$ if and only if $x$ satisfies the two conditions in the statement of the theorem.
\end{proof}

\section{Strip Graphs}
\label{Sec:Strip}

\subsection{The Set $\cA(G_n)$}
\label{Sec:SetA}
We now turn to the strip graphs $G_n$.  We aim to represent the set $\cA(G_n)$ explicitly as a subset of $\Z/F_{2n}\Z$.  To do this, we will use Theorem~\ref{Thm:Minors}.

\begin{lemma}
\label{Lem:Kappa2}
We have
\begin{align*}
\kappa_{1,k}(G_n) &= F_{2n-2} + \kappa_{1,k-2}(G_{n-2}) \mbox{ for } 2 \leq k \leq n \\
\kappa_{1,1}(G_n) &= F_{2n-1} \\
\kappa_{1,0} (G_n) &= 0.
\end{align*}
\end{lemma}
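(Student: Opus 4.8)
The plan is to establish the three identities by direct combinatorial analysis of the 2-component spanning forests counted by $\kappa_{1,k}(G_n)$, exactly as in the proof of Lemma~\ref{Lem:Fibonacci} for the fan graphs. Recall that $\kappa_{1,k}(G_n)$ counts 2-component spanning forests $T_0 \sqcup T_1$ of $G_n$ where $T_0 \ni v_0$ and $T_1 \ni v_1, v_k$. The vertex $v_0$ has valence 2, with neighbors $v_1$ and $v_2$ (since the only indices $j$ with $|0-j| \in \{1,2\}$ are $j=1,2$). This low valence at the base vertex is what makes the recursion tractable.

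The easy cases come first. For $\kappa_{1,0}(G_n)$: asking that $v_0$ and $v_1$ lie in the component $T_1$ and also that $v_0$ lie in $T_0$ is contradictory (with $k=0$ the ``other'' component must contain both $v_1$ and $v_0$), so the count is $0$; alternatively invoke Theorem~\ref{Thm:Minors}, under which $\kappa_{1,0}(G_n)/\kappa(G_n) = L_{1,0} = 0$ since $L$ has a zero first row and column. For $\kappa_{1,1}(G_n) = F_{2n-1}$: this is immediate from Lemma~\ref{Lem:OuterplanarKappa2}, since $G_n$ is maximal outerplanar on $n+1$ vertices with exactly two valence-2 vertices ($v_0$ and $v_n$), $v_0$ has valence 2, and $v_1$ is adjacent to $v_0$.

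The main content is the recursion $\kappa_{1,k}(G_n) = F_{2n-2} + \kappa_{1,k-2}(G_{n-2})$ for $2 \le k \le n$. Here I would split the count of forests according to the behavior at $v_0$. Since $v_0$ has exactly two incident edges, to $v_1$ and to $v_2$, and $v_0 \in T_0$ while $v_1 \in T_1$, the edge $v_0 v_1$ is never in the forest. So the forest is determined by (i) whether the edge $v_0 v_2$ is used, and (ii) a spanning forest of $G_n \smallsetminus v_0$ (which, with the edge $v_0v_2$ possibly attached, must form the two trees). If $v_0 v_2$ is \emph{not} used, then $v_0$ is an isolated vertex, $T_0 = \{v_0\}$, and $T_1$ is a spanning tree of $G_n \smallsetminus v_0$; but $G_n \smallsetminus v_0$ is the strip graph on vertices $v_1, \ldots, v_n$, which is isomorphic to a maximal outerplanar graph of the appropriate type — in fact it has $\kappa = F_{2n-2}$, giving the first summand. (One should check this deletion really yields the graph $G'$ of Slater's construction with $\kappa(G') = F_{2n-2}$, as in Lemma~\ref{Lem:OuterplanarKappa2}.) If $v_0 v_2$ \emph{is} used, then $v_2$ lies in $T_0$, and contracting/deleting appropriately we get a 2-component spanning forest of $G_n \smallsetminus v_0$ with $v_2 \in T_0$ and $v_1, v_k \in T_1$; the further structure forces $v_1$ to be isolated-or-leaf in a way that lets us delete $v_1$ and $v_2$ together, landing in $G_n \smallsetminus \{v_0, v_1\}$, which is $G_{n-2}$ on vertices $v_2, \ldots, v_n$ relabeled, with the forest now counted by $\kappa_{1,k-2}(G_{n-2})$ (indices shift by $2$). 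Making this bijection precise — in particular verifying that $v_1$ can always be stripped off cleanly because its only neighbors are $v_0, v_2, v_3$ and $v_0v_1$ is absent — is the one step requiring care.

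The main obstacle I anticipate is pinning down the second bijection: unlike the fan case, where deleting a path was transparent, here both $v_1$ and $v_2$ sit near $v_0$ and one must argue that in any forest of the ``$v_0v_2$ used'' type the pair $\{v_1, v_2\}$ detaches from the rest in a controlled way, and that the residual object on $G_{n-2}$ is a genuine 2-component forest with the right component membership. I would handle this by casework on the edges incident to $v_1$ in the forest (noting $v_1$'s remaining neighbors are only $v_2$ and $v_3$), checking each case is invertible. Once the bijection is set up, the identity $\kappa_{1,k}(G_n) = F_{2n-2} + \kappa_{1,k-2}(G_{n-2})$ follows, and iterating it (together with the base values $\kappa_{1,0} = 0$, $\kappa_{1,1} = F_{2n-1}$) will in the next step yield the closed form $\kappa_{1,k}(G_n) = F_k F_{2n-k}$ claimed for $\cA(G_n)$.
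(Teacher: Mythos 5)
Your proposal is correct and follows essentially the same route as the paper: the paper also observes that $v_0v_1$ is never in the forest, splits on whether $v_0v_2$ is used (the unused case giving $T_0=\{v_0\}$ and $\kappa(G_n\smallsetminus v_0)=\kappa(G_{n-1})=F_{2n-2}$ spanning trees, the used case forcing $v_1v_3\in T_1$ so that deleting $v_0,v_1$ lands in $G_{n-2}$ with the count $\kappa_{1,k-2}(G_{n-2})$), and handles the base cases exactly as you do. The one step you flag as delicate resolves just as you anticipate: since $v_1\in T_1$ and $v_2\in T_0$, the only edge of the forest incident to $v_1$ is $v_1v_3$, so $v_1$ detaches as a leaf.
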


\begin{proof}
To see that $\kappa_{1,0} (G_n)=0$, note that if one component of a 2-component forest contains $v_0$, then the other does not.  The fact that $\kappa_{1,1} (G_n) = F_{2n-1}$ is Lemma~\ref{Lem:OuterplanarKappa2}.

We now assume that $k \geq 2$.  Given a 2-component spanning forest, let $T_0$ denote the component containing $v_0$ and $T_1$ denote the component containing $v_1$ and $v_k$.  Note that the edge from $v_0$ to $v_1$ cannot appear in such a spanning forest.  There are two cases:

\begin{enumerate}
\item  If the edge from $v_0$ to $v_2$ is not in $T_0$, then $T_0 = \{v_0\}$. As such, $T_1$ is a spanning tree of the graph $G_n \smallsetminus \{v_0\} \cong G_{n-1}$. By \cite[Proposition~1]{Slater77} (or \cite[Lemma~1]{CombiSpanning}), there are exactly $F_{2n-2}$ such spanning trees.

\item If the edge from $v_0$ to $v_2$ is in $T_0$, then the edge from $v_1$ to $v_3$ is in any path from $v_1$ to $v_k$. As such, this edge is contained in $T_1$.  Thus, the restriction of our spanning forest to $G_n \smallsetminus \{ v_0, v_1 \} \cong G_{n-2}$ is a 2-component spanning forest, where one component contains $v_2$ and the other component contains both $v_3$ and $v_k$.  By definition, the number of such spanning forests is equal to $\kappa_{1,k-2}(G_{n-2})$.
\end{enumerate}
Combining the two cases, we obtain
\[
\kappa_{1,k}(G_n) = F_{2n-2} + \kappa_{1,k-2}(G_{n-2}).
\]
\end{proof}

As in Section~\ref{Sec:Fan}, since the map $\varphi$ is an isomorphism, we may identify the set $\cA(G_n)$ with its image under $\varphi$.

\begin{lemma}
\label{Lem:FibProduct}
For all $0 \leq k \leq n$, we have $\kappa_{1,k}(G_n) = F_k F_{2n-k}$.
\end{lemma}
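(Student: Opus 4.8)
The plan is to prove the identity $\kappa_{1,k}(G_n) = F_k F_{2n-k}$ by induction, using the recursion already established in Lemma~\ref{Lem:Kappa2}. The recursion relates $\kappa_{1,k}(G_n)$ to $\kappa_{1,k-2}(G_{n-2})$, so the natural approach is a double induction: induct on $n$, and within each $n$ induct on $k$ in steps of two, with the two base cases $k=0$ and $k=1$ handled separately.

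\textbf{Base cases.} For $k=0$, Lemma~\ref{Lem:Kappa2} gives $\kappa_{1,0}(G_n) = 0 = F_0 F_{2n}$, since $F_0 = 0$. For $k=1$, Lemma~\ref{Lem:Kappa2} gives $\kappa_{1,1}(G_n) = F_{2n-1} = F_1 F_{2n-1}$, since $F_1 = 1$. These match the claimed formula. (One should also note the small-$n$ cases $n=0,1$ where the range $0 \le k \le n$ is degenerate are covered by these.)

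\textbf{Inductive step.} Fix $n \ge 2$ and suppose the formula holds for $G_{n-2}$, i.e. $\kappa_{1,j}(G_{n-2}) = F_j F_{2(n-2)-j} = F_j F_{2n-4-j}$ for all $0 \le j \le n-2$. For $2 \le k \le n$, apply Lemma~\ref{Lem:Kappa2}:
\[
\kappa_{1,k}(G_n) = F_{2n-2} + \kappa_{1,k-2}(G_{n-2}) = F_{2n-2} + F_{k-2} F_{2n-4-(k-2)} = F_{2n-2} + F_{k-2} F_{2n-k-2}.
\]
So the entire proof reduces to the Fibonacci identity
\[
F_k F_{2n-k} = F_{2n-2} + F_{k-2} F_{2n-k-2}.
\]
This is the step that needs the actual computation, but it is elementary. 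One clean way: set $a = k$, $b = 2n-k$, so $a + b = 2n$, and the claim becomes $F_a F_b - F_{a-2} F_{b-2} = F_{a+b-2}$. This follows from the standard identity $F_{m+j} = F_m F_{j+1} + F_{m-1} F_j$ (the "d'Ocagne"/addition formula) applied twice, or more directly by writing $F_a = F_{a-1} + F_{a-2}$ and $F_b = F_{b-1} + F_{b-2}$, expanding $F_a F_b$, and repeatedly using $F_{a-1}F_{b-1} + F_{a-2}F_{b-2}$-type reductions together with the addition formula $F_{a-1}F_b + F_{a-2}F_{b-1} = F_{a+b-2}$ — wait, more carefully, one uses $F_{a}F_{b+1} + F_{a-1}F_b = F_{a+b}$, so expanding and regrouping yields the result. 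I would simply cite or quickly verify the identity $F_{a}F_{b} = F_{a-2}F_{b-2} + F_{a+b-2}$ using the two-term addition formula for Fibonacci numbers.

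\textbf{Main obstacle.} There is no serious obstacle; the only content is the Fibonacci identity $F_k F_{2n-k} = F_{2n-2} + F_{k-2}F_{2n-k-2}$, and the bookkeeping to make sure the induction on $n$ (downward by 2) terminates correctly at the base cases $n \in \{0,1\}$ or equivalently at $k \in \{0,1\}$. I would present the identity proof compactly via the addition formula $F_{m+j} = F_{m+1}F_j + F_m F_{j-1}$ rather than grinding through a long manipulation. The structural induction is then immediate, and combined with Corollary~\ref{Cor:Iso} this gives the stated description $\cA(G_n) = \{F_k F_{2n-k} \mid 0 \le k \le n\} \subset \Z/F_{2n}\Z$.
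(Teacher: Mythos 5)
Your proposal is correct and follows essentially the same route as the paper: both use the recursion $\kappa_{1,k}(G_n) = F_{2n-2} + \kappa_{1,k-2}(G_{n-2})$ from Lemma~\ref{Lem:Kappa2} together with induction and a Fibonacci identity, the paper merely carrying out the identity $F_k F_{2n-k} = F_{2n-2} + F_{k-2}F_{2n-k-2}$ by a longer term-by-term expansion rather than isolating it and invoking the addition formula $F_{a+b-2} = F_{a-1}F_b + F_{a-2}F_{b-1}$ as you do. Your framing of the induction (on $n$ in steps of two, with $k=0,1$ as base cases) is if anything slightly cleaner than the paper's ``fix $n$, induct on $k$'' phrasing, since the recursion genuinely descends to the smaller graph $G_{n-2}$.
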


\begin{proof}
We fix $n$ and prove this by induction on $k$.  Note that the base cases $k=0,1$ are done in Lemma~\ref{Lem:Kappa2}.  For $k \geq 1$, by Lemma~\ref{Lem:Kappa2}, we have
\begin{align*}
    \kappa_{1,k+1}(G_n) &= F_{2n-2} + \kappa_{1,k-1} (G_{n-2}) \\
    &= F_{2n-2} + F_{k-1}F_{2n-k+1},
\end{align*}
where the second equality holds by induction.  Now, by the identity on the top of page 48 from \cite{CountingHosoya}, the above is equal to
\begin{align*}
    &\mathcolor{white}{=} F_{k-1}F_{2n-k-2} + F_k F_{2n-k-1} + F_{k-1}F_{2n-k-3}\\
    &= F_k F_{2n-k-2} + F_{k-1}F_{2n-k-2} + F_k F_{2n-k-3} + F_{k-1} F_{2n-k-3} \\
    &= F_{k+1}F_{2n-k-2} + F_{k+1}F_{2n-k-3} \\
    &= F_{k+1}F_{2n-k-1}.
\end{align*}
\end{proof}

\begin{corollary}
\label{Cor:SetA}
We have
\[
\cA(G_n) = \{ F_k F_{2n-k} \mid 0 \leq k \leq n \} \subset \Z/F_{2n}\Z .
\]
\end{corollary}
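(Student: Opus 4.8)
The plan is to derive Corollary~\ref{Cor:SetA} as an immediate consequence of the two results just established. By Corollary~\ref{Cor:Iso}, the map $\varphi \colon \Jac(G_n) \to \Z/F_{2n}\Z$ sending $D$ to $\langle D, v_1 - v_0 \rangle \pmod{\Z}$ is an isomorphism, so we may identify $\cA(G_n)$ with its image $\varphi(\cA(G_n))$. By definition, $\cA(G_n) = \{ v_k - v_0 \mid 0 \leq k \leq n \}$, so $\varphi(\cA(G_n)) = \{ \varphi(v_k - v_0) \mid 0 \leq k \leq n \}$, and it suffices to compute each $\varphi(v_k - v_0)$.

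The key step is to apply Theorem~\ref{Thm:Minors}, which gives $\varphi(v_k - v_0) = \langle v_k - v_0, v_1 - v_0 \rangle = L_{1,k} = \kappa_{1,k}(G_n)/\kappa(G_n)$ in $\Q/\Z$. By Lemma~\ref{Lem:FibProduct}, $\kappa_{1,k}(G_n) = F_k F_{2n-k}$, and since $G_n$ is a maximal outerplanar graph on $n+1$ vertices with exactly two vertices of valence two (namely $v_0$ and $v_n$), we have $\kappa(G_n) = F_{2n}$ by \cite[Proposition~1]{Slater77} (as already used in the proof of Corollary~\ref{Cor:Iso}). Hence $\varphi(v_k - v_0) = F_k F_{2n-k}/F_{2n} \pmod{\Z}$, which under the identification $\Z/F_{2n}\Z \subset \Q/\Z$ is precisely the class of $F_k F_{2n-k}$. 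Running over $0 \leq k \leq n$ gives exactly the claimed description.

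There is essentially no obstacle here: all the substantive work — establishing the recursion for $\kappa_{1,k}(G_n)$ in Lemma~\ref{Lem:Kappa2}, resolving it into the closed form $F_k F_{2n-k}$ in Lemma~\ref{Lem:FibProduct}, and identifying $\Jac(G_n)$ with $\Z/F_{2n}\Z$ in Corollary~\ref{Cor:Iso} — has already been done. The only mild point worth noting explicitly is that $G_n$ does satisfy the hypotheses of Corollary~\ref{Cor:Iso}: it is simple, outerplanar with the stated embedding, has $2n-1$ edges (one checks that the edge set $\{v_i v_j : |i-j| \in \{1,2\}\}$ on $n+1$ vertices has size $n + (n-1) = 2n-1$), hence is maximal outerplanar, and $v_0$ and $v_n$ are its only valence-two vertices. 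Once that is in hand, the corollary follows by direct substitution, so the proof is a one-line invocation of Lemma~\ref{Lem:FibProduct} together with Theorem~\ref{Thm:Minors} and Corollary~\ref{Cor:Iso}.
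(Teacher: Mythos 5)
Your proposal is correct and matches the paper's argument exactly: the paper also obtains Corollary~\ref{Cor:SetA} by identifying $\cA(G_n)$ with its image under the isomorphism $\varphi$ of Corollary~\ref{Cor:Iso} and then reading off $\varphi(v_k-v_0)=\kappa_{1,k}(G_n)/\kappa(G_n)=F_kF_{2n-k}/F_{2n}$ from Theorem~\ref{Thm:Minors} and Lemma~\ref{Lem:FibProduct}. Your explicit verification that $G_n$ satisfies the hypotheses of Corollary~\ref{Cor:Iso} is a welcome detail the paper leaves implicit.
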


Theorems~\ref{Thm:GenusFib} and~\ref{Thm:FreimanStrip} follow immediately.

\begin{proof}[Proof of Theorem~\ref{Thm:GenusFib}]
The statements about $\cA(\mathcal{F}_n)$ and $\cA(G_n)$ follow directly from Lemma~\ref{Lem:Genus}, using the fact that the first Betti number of $\mathcal{F}_n$ is $n-1$.  To see the statement about $\cB(G_n)$, note that by the Catalan identity, one has $F_{n-k} F_{n+k} - F_n^2 = (-1)^{k+1} F_k^2$.  Since translation by $-F_n^2$ is a Freiman isomorphism of arbitrary order, the result follows.
\end{proof}

\begin{proof}[Proof of Theorem~\ref{Thm:FreimanStrip}]
There is an involution of $G_n$ that sends $v_k$ to $v_{n-k}$ for all $k$.  Thus, by Proposition~\ref{Prop:Freiman}, the involution $\iota \colon \cA(G_n) \to \cA(G_n)$ given by $\iota (F_k F_{2n-k}) = F_{n-k}F_{n+k}$
is a Freiman isomorphism of arbitrary order.
\end{proof}

\subsection{The Zeckendorf Form}
\label{Sec:Zeckendorf}
Our goal for the remainder of the paper is to use Corollary~\ref{Cor:Gonality} to compute the gonality of $G_n$.  To do this, we need to describe the sets $m\cA(G_n)$ for certain small values of $m$.  In this section, we introduce a fundamental tool for describing these sets.

Every nonnegative integer can be written uniquely as a sum of non-consecutive Fibonacci numbers.  This expression is called the \emph{Zeckendorf form} of the number.  When $x \in \Z/F_{2n}\Z$, we define the Zeckendorf form of $x$ to be the Zeckendorf form of its unique representative in the range $0 \leq x < F_{2n}$.  We will primarily be interested in the \emph{leading terms} of a number written in Zeckendorf form, which are the largest Fibonacci numbers appearing in this sum.  Equivalently, the leading term of a number $x$ is the largest Fibonacci number smaller than $x$.  Our next goal is to write every element of $\cA(G_n)$ in Zeckendorf form.

\begin{lemma}
\label{Lem:ZeckendorfProduct}
\cite[Theorem~1]{Freitag1998}
For $m\geq n$, the Zeckendorf form of $F_mF_n$ is:
\[
F_m F_n = \begin{cases}
        \sum_{r=1}^{\lfloor \frac{n}{2}\rfloor} F_{m+n+2-4r} & \text{ if $n$ is even}\\
        F_{m-n+1} + \sum_{r=1}^{\lfloor \frac{n}{2}\rfloor} F_{m+n+2-4r} & \text{ if $n$ is odd}.\\
    \end{cases}.
\]
\end{lemma}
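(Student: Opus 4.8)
The statement has two parts: that the displayed right-hand side equals $F_m F_n$ as an integer, and that this expression is a legitimate Zeckendorf expansion, so that by uniqueness it \emph{is} the Zeckendorf form. I would prove these separately. For the arithmetic identity, the plan is to collapse the product into a short combination of Lucas numbers and then telescope. Write $L_k$ for the $k$th Lucas number ($L_0 = 2$, $L_1 = 1$, $L_k = L_{k-1}+L_{k-2}$), and recall the Binet formulas $F_k = (\alpha^k - \beta^k)/\sqrt5$ and $L_k = \alpha^k + \beta^k$, where $\alpha, \beta = (1 \pm \sqrt5)/2$ satisfy $\alpha\beta = -1$. Since $m \geq n$, we have $\alpha^m\beta^n = (\alpha\beta)^n \alpha^{m-n} = (-1)^n\alpha^{m-n}$ and $\beta^m\alpha^n = (-1)^n\beta^{m-n}$, so expanding $5F_mF_n = (\alpha^m - \beta^m)(\alpha^n - \beta^n)$ gives
\[
F_m F_n = \frac{1}{5}\bigl(L_{m+n} - (-1)^n L_{m-n}\bigr).
\]
The same kind of Binet computation yields the auxiliary identity $L_j + L_{j+2} = 5F_{j+1}$ for all $j \geq 0$, whence also $L_k - L_{k-4} = L_{k-1} + L_{k-3} = 5F_{k-2}$.

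Next I would telescope, according to the parity of $n$. If $n = 2s$, then $m-n = (m+n) - 4s$, so
\[
L_{m+n} - L_{m-n} = \sum_{r=1}^{s}\bigl(L_{m+n-4(r-1)} - L_{m+n-4r}\bigr) = 5\sum_{r=1}^{s} F_{m+n+2-4r},
\]
and dividing by $5$ using the first identity yields $F_m F_n = \sum_{r=1}^{\lfloor n/2\rfloor} F_{m+n+2-4r}$. If $n = 2s+1$, then $(m+n) - 4s = m-n+2$, so the same telescoping gives $L_{m+n} - L_{m-n+2} = 5\sum_{r=1}^{s} F_{m+n+2-4r}$; adding the auxiliary identity in the form $L_{m-n} + L_{m-n+2} = 5F_{m-n+1}$ and dividing by $5$ yields $F_m F_n = F_{m-n+1} + \sum_{r=1}^{\lfloor n/2\rfloor} F_{m+n+2-4r}$. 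This proves both formulas as integer identities.

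It remains to check that each right-hand side is a valid Zeckendorf expansion. Consecutive values of $r$ contribute Fibonacci indices differing by $4$, so the summands are pairwise non-consecutive; the smallest index occurring is $m-n+2$ in the even case and $m-n+4$ in the odd case, each at least $2$ because $m \geq n$; and in the odd case the extra summand $F_{m-n+1}$ has index three below the smallest index of the sum, so it too is non-consecutive, with index $\geq 2$ whenever $m > n$. The lone edge case is $m = n$ with $n$ odd, where the bare term is $F_1$; since $F_1 = F_2$, the displayed expression still equals $F_m F_n$ and becomes the canonical Zeckendorf form after relabeling $F_1$ as $F_2$. All the identities invoked are standard, so I expect no conceptual difficulty; the main obstacle is purely bookkeeping — keeping the two parities and this edge case straight, and not slipping a sign or an index in the telescoping.
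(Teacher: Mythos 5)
Your proof is correct; I checked the Binet computation, the auxiliary identity $L_j + L_{j+2} = 5F_{j+1}$, both telescoping sums, and the non-consecutivity of the resulting indices, and everything goes through (spot checks: $F_6F_4 = F_8 + F_4 = 24$ and $F_5F_3 = F_3 + F_6 = 10$ both agree). The comparison with the paper is a little unusual here: the paper does not prove this lemma at all, but simply cites it as Theorem~1 of Freitag--Phillips (1998), so any complete argument you give is by definition a different route. What your write-up buys is a short, self-contained derivation from the single product formula $F_mF_n = \tfrac{1}{5}\bigl(L_{m+n} - (-1)^n L_{m-n}\bigr)$, with the parity split in the lemma emerging naturally from the sign $(-1)^n$ and the length of the telescope; this is cleaner than an induction on $n$ and makes transparent why the odd case picks up the extra low-order term $F_{m-n+1}$ (it is exactly $\tfrac{1}{5}(L_{m-n}+L_{m-n+2})$ left over after the telescope stops at index $m-n+2$). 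Your handling of the $m=n$, $n$ odd edge case (where the bare term is $F_1 = F_2$) is also a point the paper's bare citation glosses over, and it is worth keeping since the paper applies the lemma with $k = n$, i.e.\ $m = n$, in Corollary~\ref{Cor:A}. The only stylistic caution is that your final sentence about "relabeling $F_1$ as $F_2$" should be phrased as a statement about the canonical representation rather than about the displayed formula, which as written in the lemma genuinely produces $F_{m-n+1} = F_1$ in that case.
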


Because of this, we have the following corollary.

\begin{corollary}
\label{Cor:A}
All elements of $\cA(G_n)$ have the following Zeckendorf form:
\[
F_k F_{2n-k} = \begin{cases}
        \sum_{r=1}^{\lfloor \frac{k}{2}\rfloor} F_{2n+2-4r} & \text{ if $k$ is even}\\
        F_{2n-2k+1} + \left(\sum_{r=1}^{\lfloor \frac{k}{2}\rfloor} F_{2n+2-4r} \right)& \text{ if $k$ is odd}.\\
    \end{cases}
\]
As such, all non-zero elements of $\cA(G_n)$, except $F_{2n-1}$, have $F_{2n-2}$ as the leading term of their Zeckendorf form.
\end{corollary}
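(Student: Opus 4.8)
The plan is to obtain the Zeckendorf form of $F_kF_{2n-k}$ by a direct substitution into Lemma~\ref{Lem:ZeckendorfProduct}, and then to read off the leading term.

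First I would verify that the Freitag identity of Lemma~\ref{Lem:ZeckendorfProduct} applies to the product $F_kF_{2n-k}$. Since $0\le k\le n$, we have $2n-k\ge n\ge k$, so the hypothesis is met with $m=2n-k$ and (in the notation of that lemma) the smaller index equal to $k$. Substituting, the exponent $m+n+2-4r$ in the cited formula becomes $(2n-k)+k+2-4r=2n+2-4r$, and the exponent $m-n+1$ in the odd case becomes $(2n-k)-k+1=2n-2k+1$; this gives precisely the two cases in the statement. I would also note, for completeness, that the expression really is a Zeckendorf form: the indices $2n+2-4r$ for $r=1,\dots,\lfloor k/2\rfloor$ decrease in steps of $4$, and in the odd case the extra index $2n-2k+1$ sits three below the smallest of them, so no two indices are consecutive.

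For the final sentence I would inspect the largest summand. When $k=0$ the product is $F_0F_{2n}=0$, the zero element, which is excluded; when $k=1$ the sum $\sum_{r=1}^{\lfloor k/2\rfloor}$ is empty, so $F_1F_{2n-1}=F_{2n-1}$, the stated exception. For every remaining $k$ with $2\le k\le n$ the term $F_{2n-2}$ (the $r=1$ summand) is present, and it exceeds the only other candidate for a leading term, $F_{2n-2k+1}$, which occurs only for $k$ odd, because $2n-2k+1\le 2n-5<2n-2$ once $k\ge 3$. Hence $F_{2n-2}$ is the leading term for all such $k$.

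There is no serious obstacle here: the entire argument is a substitution into the cited identity followed by a short case check. The only point demanding a little attention is the bookkeeping at the boundary values $k\in\{0,1\}$, where the conventions $F_0=0$ and the empty sum must be tracked against the \emph{leading term} terminology.
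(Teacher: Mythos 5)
Your proposal is correct and matches the paper's route exactly: the paper derives Corollary~\ref{Cor:A} by the same substitution $m=2n-k$ into Lemma~\ref{Lem:ZeckendorfProduct}, offering no further argument beyond that (and a remark that one could alternatively induct via Lemma~\ref{Lem:Kappa2}). Your extra checks that the resulting indices are non-consecutive and your handling of $k\in\{0,1\}$ are careful additions to what the paper leaves implicit.
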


Corollary~\ref{Cor:A} can also be seen by induction, using Lemma~\ref{Lem:Kappa2}.

\subsection{A Lower Bound on the Gonality of Strip Graphs}
\label{Sec:StripBounds}

In Section~\ref{Sec:MainThm}, we will prove that $\gon (G_n) = 5$ when $n$ is sufficiently large.  To show that $\gon (G_n) \leq 5$, it suffices to exhibit a divisor of degree 5 and positive rank.  The more difficult part of the argument is to establish a lower bound on the gonality.  Here, we demonstrate a lower bound of 3 when $n \geq 4$.  Later in the paper, we will improve this bound to 4  when $n \geq 6 $ and then to 5 when $n \geq 8$.  These arguments all follow a similar approach, but the latter bounds are much more involved, with many more cases.  We structure the argument in this way is to highlight the technique in a simpler case before proving the main theorem.

The \emph{scramble number} of a graph was first defined in \cite{HJJS}, where it was shown that it is a lower bound on gonality.  Here, we show that the scramble number of $G_n$ is 3 for all $n \geq 4$.

\begin{lemma}
\label{Lem:Scramble}
If $n \geq 4$, then $\mathrm{sn} (G_n) = 3$.
\end{lemma}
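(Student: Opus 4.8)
The plan is to establish the two inequalities $\mathrm{sn}(G_n)\ge 3$ and $\mathrm{sn}(G_n)\le 3$ separately. Recall that the order of a scramble $\mathcal S$ is $\|\mathcal S\|=\min\{h(\mathcal S),e(\mathcal S)\}$, where $h(\mathcal S)$ is the least size of a vertex set meeting every egg and $e(\mathcal S)$ is the least size of an edge cut $\partial(A)$ having one egg contained in $A$ and one egg contained in $V(G_n)\setminus A$, and $\mathrm{sn}(G_n)=\max_{\mathcal S}\|\mathcal S\|$.

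For the lower bound I would use the explicit scramble $\mathcal S_0$ whose three eggs are the singletons $\{v_1\},\{v_2\},\{v_3\}$ (distinct, hence a valid scramble, because $n\ge 4$). As these eggs are pairwise disjoint, $h(\mathcal S_0)=3$. To see $e(\mathcal S_0)\ge 3$ it is enough to check that the minimum size of an edge cut separating $v_i$ from $v_j$ in $G_n$ is at least $3$ for each pair $i\ne j$ in $\{1,2,3\}$, since any egg-cut of $\mathcal S_0$ separates some such pair. This follows from Menger's theorem by exhibiting three edge-disjoint paths in each case; for instance $v_1v_2$, $v_1v_0v_2$, and $v_1v_3v_2$ for the pair $\{1,2\}$, while the remaining two pairs additionally use the vertex $v_4$, which exists because $n\ge 4$. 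Hence $\|\mathcal S_0\|\ge\min\{3,3\}=3$.

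For the upper bound, let $\mathcal S$ be an arbitrary scramble; I would assume $e(\mathcal S)\ge 4$ and show $h(\mathcal S)\le 2$, which suffices since if instead $e(\mathcal S)\le 3$ then already $\|\mathcal S\|\le 3$. Two facts about $G_n$ drive the argument. First, the prefix cut $C_i:=\partial\{v_0,\dots,v_i\}$ consists of the edges $v_{i-1}v_{i+1}$, $v_iv_{i+1}$, $v_iv_{i+2}$ (as far as these exist), so $|C_i|=2$ for $i\in\{0,n-1\}$ and $|C_i|=3$ for $1\le i\le n-2$. Second, since consecutive indices of a connected subset of $G_n$ differ by at most $2$, any vertex set $E$ inducing a connected subgraph, with least index $p$ and greatest index $q$, meets every window $\{v_j,v_{j+1}\}$ for $p\le j\le q-1$. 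Because $e(\mathcal S)\ge 4$, no $C_i$ is an egg-cut; in particular, if $h(\mathcal S)\ge 2$ then neither $\{v_0\}$ nor $\{v_n\}$ is an egg. Now set $L=\{\,i\in\{1,\dots,n-2\}:\text{some egg lies in }\{v_0,\dots,v_i\}\,\}$ and define $R$ analogously with $\{v_{i+1},\dots,v_n\}$; since $L$ is upward closed and $R$ downward closed one gets $L=\{\ell,\dots,n-2\}$ and $R=\{1,\dots,\rho\}$, and $L\cap R=\emptyset$ forces $\rho+1\le\ell$ (with the convention $\ell=n-1$ if $L=\emptyset$ and $\rho=0$ if $R=\emptyset$). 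Using that no $C_j$ is an egg-cut, together with $\{v_0\}$ and $\{v_n\}$ not being eggs, one checks that every egg has greatest index $\ge\ell$ and least index $\le\rho+1\le\ell$; applying the window fact with $\{v_{\ell-1},v_\ell\}$ then shows every egg meets $\{v_{\ell-1},v_\ell\}$, so $h(\mathcal S)\le 2$. Therefore $\|\mathcal S\|\le 3$ for every $\mathcal S$, and $\mathrm{sn}(G_n)\le 3$.

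The main obstacle is the upper bound, and within it the bookkeeping around the two valence-$2$ vertices $v_0$ and $v_n$: the prefix cuts at the ends have size only $2$, so the clean window argument applies only after the degenerate eggs $\{v_0\}$ and $\{v_n\}$ are ruled out separately. Once the fact about consecutive indices of connected subsets is isolated, the remainder is a finite case check with no real difficulty.
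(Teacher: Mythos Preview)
Your argument is correct and genuinely different from the paper's. For the lower bound, the paper observes that $G_n$ contains $C_{3;2,2,1}$ as a topological subgraph and invokes the known computation $\mathrm{sn}(C_{3;2,2,1})=3$ together with topological subgraph monotonicity; you instead produce the scramble $\{\{v_1\},\{v_2\},\{v_3\}\}$ directly and verify $h=3$ and $e\ge 3$ by Menger. For the upper bound, the paper bounds scramble number by screewidth and exhibits a tree-cut decomposition of width $3$; you work from first principles, using that every prefix cut $C_i$ has size at most $3$ to rule out egg-cuts and then running a window/separator argument to find a $2$-element hitting set. Your approach is self-contained and avoids the external citations the paper leans on, at the cost of more bookkeeping (the endpoint cases $\{v_0\}$, $\{v_n\}$ and the $L$/$R$ interval analysis). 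One small point worth tightening: in the window step you need $p\le \ell-1$, whereas you only derive $p\le\rho+1\le\ell$; the case $p=\ell$ is fine since then $v_\ell\in E$, but it would be cleaner to say so explicitly.
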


\begin{proof}
The graph $G_n$ has a topological subgraph isomorphic to the graph $C_{3;2,2,1}$ from \cite{EagletonMorrison22}, where it is shown to have scramble number 3.  Since the scramble number is topological subgraph monotone, we see that $\mathrm{sn}(G_n) \geq 3$.  

To show that $\mathrm{sn}(G_n) \leq 3$, we use \cite[Theorem~1]{CFGMMMORW}, which shows that the scramble number is bounded above by a graph invariant known as the \emph{screewidth}.  For $i$ an even number less than $n$, let $X_i = \{v_i , v_{i+1} \}$, and if $n$ is even, let $X_n = \{ v_n \}$.  Let $\mathcal{T}$ be the path with nodes $X_i$ where $X_i$ is adjacent to $X_{i+2}$ for all $i$.  The adhesion of each link in $\mathcal{T}$ is either 2 or 3, and the adhesion of each node is either 1 or 2.  Thus, the width of this tree-cut decomposition is 3, hence $\mathrm{scw}(G_n) \leq 3$.
\end{proof}

Lemma~\ref{Lem:Scramble} implies that, for $n \geq 4$, the gonality of $G_n$ is at least 3.  We can also prove this using our approach.  We find it helpful to illustrate our approach first in this simple case.

\begin{theorem}
\label{Thm:BoundOf3}
If $n \geq 4$, then $\gon(G_n) \geq 3$.
\end{theorem}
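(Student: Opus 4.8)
The plan is to use Corollary~\ref{Cor:Gonality}: to show $\gon(G_n) \geq 3$ it suffices to show that no divisor of degree $1$ or degree $2$ has positive rank, i.e. that for every $x \in \Z/F_{2n}\Z$ we have $x - \cA(G_n) \not\subseteq (d-1)\cA(G_n)$ for $d \in \{1,2\}$. The case $d=1$ says $\cA(G_n)$ is not all of a single coset translate, equivalently $x - \cA(G_n) \not\subseteq 0\cA(G_n) = \{0\}$; since $\vert \cA(G_n) \vert > 1$ for $n \geq 4$, this is immediate. So the real content is the case $d = 2$: I must show that for every $x$, the set $x - \cA(G_n)$ is \emph{not} contained in $\cA(G_n)$ (recall $(d-1)\cA(G_n) = 1\cA(G_n) = \cA(G_n)$). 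Equivalently, for every $x$ there is some $k$ with $0 \leq k \leq n$ such that $x - F_k F_{2n-k} \notin \cA(G_n)$.

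The key tool is Corollary~\ref{Cor:A}, which describes the Zeckendorf forms of elements of $\cA(G_n)$: every nonzero element except $F_{2n-1}$ has leading term $F_{2n-2}$, while the exceptional elements $0$ and $F_{2n-1} = F_1 F_{2n-1}$ round out the set. First I would observe (using the remark after Corollary~\ref{Cor:Gonality}) that if $x - \cA(G_n) \subseteq \cA(G_n)$ then, since $0 \in \cA(G_n)$, we get $x \in \cA(G_n)$; so $x$ itself is one of the listed elements. Then I would split into cases according to which element $x$ is. Because $0 \in \cA(G_n)$, taking $k$ with $F_k F_{2n-k} = x$ gives $x - x = 0 \in \cA(G_n)$, which is consistent, so that test alone is not enough; I need to exhibit a \emph{different} $k'$ whose subtraction escapes $\cA(G_n)$. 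The natural candidates to subtract are $0$ (giving back $x$, which lies in $\cA(G_n)$, useless) and the two elements with ``small'' Zeckendorf data, namely $F_{2n-1}$ and the largest element $F_n F_n$ (or the element $F_{n-1}F_{n+1}$, etc.), chosen so that $x$ minus that element has a Zeckendorf form incompatible with the rigid shape forced by Corollary~\ref{Cor:A}.

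Concretely, I expect the cleanest argument runs as follows. If $x = 0$, then $x - F_{2n-1} = -F_{2n-1} = F_{2n} - F_{2n-1} = F_{2n-2}$ in $\Z/F_{2n}\Z$; but $F_{2n-2}$ is not of the form $F_k F_{2n-k}$ for any $0 \leq k \leq n$ (one checks $F_k F_{2n-k}$ is $0$, $F_{2n-1}$, or has $F_{2n-2}$ as a \emph{leading} term with further nonzero lower terms — in particular $F_2 F_{2n-2} = F_{2n-2}$ only when $F_2 = 1$, i.e. $k=2$ gives $F_2 F_{2n-2} = F_{2n-2}$, so I must be careful here). This last subtlety — that $F_2 = 1$ so $F_2 F_{2n-2} = F_{2n-2}$ genuinely is in $\cA(G_n)$ — is exactly the kind of low-index coincidence that forces the hypothesis $n \geq 4$ and makes the case analysis delicate. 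So instead, for $x = 0$ I would subtract $F_n F_n$ (for $n$ even) or compare against $F_{n-1}F_{n+1}$, using Corollary~\ref{Cor:A} to read off that $-F_n F_n \bmod F_{2n}$ has a Zeckendorf leading term strictly below $F_{2n-2}$ yet is nonzero and not equal to $F_{2n-1}$. For $x = F_{2n-1}$, I would subtract the largest element $F_n F_n$ and again get a residue whose Zeckendorf form violates the dichotomy of Corollary~\ref{Cor:A}. For the remaining $x$ (those with leading term $F_{2n-2}$), subtracting $F_{2n-1}$ shifts the leading term and typically produces carries that destroy the required form; subtracting $0$ is the trivial consistent case to be ignored.

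The main obstacle will be the bookkeeping of Zeckendorf forms under subtraction modulo $F_{2n}$: subtraction is not ``Zeckendorf-friendly'' (it produces borrows/carries), so for each candidate $x \in \cA(G_n)$ and each candidate summand $a \in \cA(G_n)$ I must reduce $x - a$ to its canonical Zeckendorf representative and check it against the rigid list in Corollary~\ref{Cor:A}. I expect that choosing the right pair (which $a$ to subtract) for each shape of $x$ — and verifying that the genuinely small cases $n = 4, 5$ behave as claimed — is the only real work; once the right choices are identified, each verification is a short computation using the identities $F_{2n} - F_{2n-1} = F_{2n-2}$, $F_{2n-1} - F_{2n-2} = F_{2n-3}$, and the Zeckendorf expansions of Lemma~\ref{Lem:ZeckendorfProduct}. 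Since the theorem only asserts $\gon(G_n) \geq 3$, and the scramble-number argument of Lemma~\ref{Lem:Scramble} already gives this, the point here is purely to rehearse the Zeckendorf-subtraction technique on a small case before the heavier Theorems~\ref{Thm:BoundOf4} and~\ref{Thm:MainThm}, so I would keep the write-up short and emphasize the method rather than exhausting every subcase.
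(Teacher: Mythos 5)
Your overall strategy is exactly the paper's: reduce via Corollary~\ref{Cor:Gonality} to showing that no $x\in\cA(G_n)$ satisfies $x-\cA(G_n)\subseteq\cA(G_n)$, then run a case analysis on the Zeckendorf form of $x$ using the rigidity in Corollary~\ref{Cor:A}, exhibiting for each $x$ a witness $a\in\cA(G_n)$ with $x-a\notin\cA(G_n)$. Your observation that $0-F_{2n-1}=F_{2n-2}=F_2F_{2n-2}\in\cA(G_n)$, so that $F_{2n-1}$ fails as a witness for $x=0$, is correct and is precisely why a different witness is needed there. Your handling of the generic case (leading term $F_{2n-2}$) by subtracting $F_{2n-1}$ also checks out: writing $x=F_{2n-2}+\ell$ with $0\le\ell<F_{2n-4}$, one gets $x-F_{2n-1}=F_{2n-1}+F_{2n-4}+\ell$, which has leading term $F_{2n-1}$ but is not equal to $F_{2n-1}$, hence is not in $\cA(G_n)$; this is arguably more uniform than the paper, which first subtracts $F_{2n-2}$ to reduce to the two special values $x=0$ and $x=F_{2n-2}$.

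There are, however, two issues. First, a concrete error: for $x=0$ you claim $-F_nF_n\bmod F_{2n}$ has Zeckendorf leading term strictly below $F_{2n-2}$. It does not. For $n=4$ one has $-F_4^2=-9\equiv 12=F_6+F_4+F_2$, whose leading term is $F_6=F_{2n-2}$; the actual obstruction is the \emph{second} term $F_{2n-4}$, which Corollary~\ref{Cor:A} forbids (the second term of an element of $\cA(G_n)$ with leading term $F_{2n-2}$ is at most $F_{2n-5}$). The conclusion $-F_n^2\notin\cA(G_n)$ survives, but for a different reason than you give. The paper sidesteps this by subtracting $2F_{2n-3}=F_3F_{2n-3}$ from $0$, giving $F_{2n-2}+F_{2n-4}$ directly, and subtracting $F_{2n-2}$ from $F_{2n-1}$, giving $F_{2n-3}$; these computations are independent of the parity and size of $n$, whereas your $F_nF_n$ witness forces you to track how the full expansion $F_{2n-2}+F_{2n-6}+\cdots$ behaves under negation mod $F_{2n}$. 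Second, you explicitly defer the case-by-case verifications (``I would keep the write-up short''), but those verifications \emph{are} the proof; as submitted, the argument is a plan with one false intermediate claim rather than a complete proof. Carrying out the three subtractions above (one per case) closes the gap.
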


\begin{proof}
We show that for $n \geq 4$, there does not exist an element $D \in \cA(G_n)$ such that $D-x \in \cA(G_n)$ for all $x \in \cA(G_n)$.  It will then follow from Corollary~\ref{Cor:Gonality} that $\gon(G_n) \geq 3$ for $n \geq 4$.  We break this into cases.
 \begin{enumerate}
        \item If $D = F_{2n-1}$, then $D-F_{2n-2} = F_{2n-3}$. Since $F_{2n-3} \notin \{0, F_{2n-1}\}$, and the leading term of its Zeckendorf form is not $F_{2n-2}$, we have $D-F_{2n-2} = F_{2n-3} \notin \cA(G_n)$ by Corollary~\ref{Cor:A}.
        
        \item If $D = 0$, then $D - 2F_{2n-3} = F_{2n-2} + F_{2n-4}$.  By Corollary~\ref{Cor:A}, the second largest even-index Fibonacci number of an element of $\cA(G_n)$ is $F_{2n-6}$.  As such, $D - 2F_{2n-3} \notin \cA(G_n)$.
        
        \item Otherwise, by Corollary~\ref{Cor:A}, $D$ has leading term $F_{2n-2}$ in its Zeckendorf form.  Thus, by Corollary~\ref{Cor:A} again, we have $D - F_{2n-2} \in \cA(G_n)$ if and only if either $D - F_{2n-2} = 0$ or $D -F_{2n-2} = F_{2n-1}$.  In the first case, we see that $D$ is equal to $F_{2n-2}$, and the second, we see that $D$ is equal to 0.  We have already considered the case where $D=0$.  If $D = F_{2n-2}$, then $D - F_{2n-1} = F_{2n-1} + F_{2n-4}$, which is not in $\cA(G_n)$ by Corollary~\ref{Cor:A}.
    \end{enumerate}
\end{proof}

In Theorem~\ref{Thm:BoundOf4} below, we will prove that $\gon(G_n) \geq 4$ when $n \geq 6$.  Because $\mathrm{sn}(G_n) = 3$, the scramble number is insufficient to compute this bound.  Instead, we will argue in a similar way to the proof of Theorem~\ref{Thm:BoundOf3}.

\subsection{The Set $2\cA(G_n)$}
\label{Sec:2A}
In this section, we describe the Zeckendorf form of all elements of $2\cA(G_n)$.  Describing this set will require us to write down the Zeckendorf form of the sum of two numbers.  This is a component of \emph{Zeckendorf arithmetic}, as described in \cite{Freitag1998, Fenwick03}.  A key idea, used implicitly in all the proofs of this section, is that if $x$ and $y$ are both smaller than $F_k$, then $x+y$ is smaller than $F_{k+2}$.  Thus, if we know only the first few terms of the Zeckendorf forms of $x$ and $y$, we can compute the leading terms of the Zeckendorf form of $x+y$.

By Corollary~\ref{Cor:A}, most elements of $2\cA(G_n)$ are of the following form.

\begin{lemma}
\label{Lem:MostOf2A}
Let $2 \leq a \leq b \leq n$.  Then the leading term of the Zeckendorf form of $F_{a}F_{2n-a} + F_{b}F_{2n-b}$ is $F_{2n-1}$, followed by either $F_{2n-3}$ or $F_{2n-4}$.  Moreover:
\begin{enumerate}
\item  if the leading terms are $F_{2n-1} + F_{2n-3} + F_{2n-5}$, then $a = b = 3$,
\item  if the leading terms are $F_{2n-1} + F_{2n-4}$, then either $a=b=2$ or the next term is $F_{2n-6}$; if there is another term, it is at most $F_{2n-9}$, and
\item if the leading terms are $F_{2n-1}+F_{2n-3}+F_{2n-6}$, then $a=3, b>3$ and next term is at most $F_{2n-9}$.
\end{enumerate}
\end{lemma}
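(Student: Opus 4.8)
The plan is to use Corollary~\ref{Cor:A}, which gives the explicit Zeckendorf form of each element of $\cA(G_n)$, and then perform careful Zeckendorf addition of the two summands $F_aF_{2n-a}$ and $F_bF_{2n-b}$, tracking only the top few terms. The key structural fact from Corollary~\ref{Cor:A} is that every nonzero element of $\cA(G_n)$ other than $F_{2n-1}$ has leading term $F_{2n-2}$ in its Zeckendorf form; moreover the even-index terms appearing are exactly $F_{2n-2}, F_{2n-6}, F_{2n-10}, \ldots$ (the terms $F_{2n+2-4r}$), and the only possible odd-index term is the single leading term $F_{2n-2k+1}$ when $k$ is odd. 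Since we assume $a, b \geq 2$, neither summand is $0$ or $F_{2n-1}$, so both have leading term $F_{2n-2}$. Adding two numbers each with leading term $F_{2n-2}$ produces $2F_{2n-2} = F_{2n-1} + F_{2n-4}$ (using $F_{2n-2} = F_{2n-3} + F_{2n-4}$ and $F_{2n-1} = F_{2n-2} + F_{2n-3}$), so the leading term of the sum is $F_{2n-1}$, and the remaining "carry" contributes at the level of $F_{2n-3}$ or $F_{2n-4}$ depending on the second terms of the two summands. This already establishes the first sentence of the lemma.

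Next I would organize the analysis by the second term of each summand's Zeckendorf form. By Corollary~\ref{Cor:A}, for a summand $F_aF_{2n-a}$ with $a \geq 2$: if $a = 2$, the element is just $F_{2n-2}$ (no second term, or the form terminates); if $a = 3$, the element is $F_{2n-5} + F_{2n-2}$; and if $a \geq 4$, the second term (after $F_{2n-2}$) is either the odd term $F_{2n-2a+1}$ when $a$ is odd with $2n-2a+1 < 2n-6$, i.e. $a > 3$, or $F_{2n-6}$ when $a$ is even — in all cases with $a \geq 4$ the next term after $F_{2n-2}$ is at most $F_{2n-5}$, and actually one checks it is at most $F_{2n-6}$ unless $a = 5$ (giving $F_{2n-9}$, still small) — the point is that the second term is at most $F_{2n-5}$, with $F_{2n-5}$ occurring precisely when $a = 3$. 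So the three cases in the lemma correspond to: (1) both $a = b = 3$, so both summands contribute a second term $F_{2n-5}$, and $2F_{2n-2} + 2F_{2n-5}$ must be reduced; (2) at least one of $a, b$ equals $2$ contributing no $F_{2n-5}$-level term, or more generally the $F_{2n-5}$-level contributions do not force an $F_{2n-3}$; (3) exactly one of $a, b$ equals $3$. For each case I would carry out the explicit reduction to Zeckendorf form of the top terms, e.g. in case (1): $2F_{2n-2} + 2F_{2n-5} = F_{2n-1} + F_{2n-4} + F_{2n-4} + F_{2n-6} = F_{2n-1} + F_{2n-3} + F_{2n-2} \cdots$ — wait, one must be careful; I would instead compute $F_{2n-2}+F_{2n-5}$ first (which is already in Zeckendorf form), double it, and reduce, arriving at $F_{2n-1}+F_{2n-3}+F_{2n-5}$ after absorbing the tail. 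The converses (that these leading-term patterns force the stated values of $a, b$) follow because any other choice of $a, b$ changes the second-term contributions and hence the reduced form at the $F_{2n-3}$ or $F_{2n-4}$ level, using the key observation that if $x, y < F_k$ then $x + y < F_{k+2}$, so the lower-order terms of the summands cannot affect the top three or four terms of the sum.

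For the "moreover" clauses, I would track one more term. In case (2), if $a = b = 2$ the sum is exactly $2F_{2n-2} = F_{2n-1} + F_{2n-4}$ with nothing below; otherwise the summands have tails starting no higher than $F_{2n-5}$ on at least one side and $F_{2n-6}$ on the other (since $a = 2$ forces no tail on one side when, say, $a=2$ and $b \geq 4$ even), and one computes that the $F_{2n-4}$-level carry combines with these to produce $F_{2n-6}$ as the next term, with any further term bounded by $F_{2n-9}$ because the summand tails below that level are each less than $F_{2n-8}$ (again by Corollary~\ref{Cor:A}, the next even-index term is $F_{2n-10}$). Case (3) is similar: with $a = 3$, $b > 3$, the $a$-side contributes $F_{2n-5}$ and the $b$-side contributes something at most $F_{2n-6}$, so after the leading $F_{2n-1}$ the reduction gives $F_{2n-3} + F_{2n-6}$, with the remaining terms bounded by $F_{2n-9}$ for the same reason. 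The main obstacle — and the only real work — is bookkeeping the carries in Zeckendorf addition cleanly enough that the case division is exhaustive and the converses are genuinely forced; the underlying principle (truncation error $< F_k$ below level $F_k$) makes this finite and mechanical, but it requires care to state the tail bounds from Corollary~\ref{Cor:A} precisely (namely, that below $F_{2n-2}$ the even-index terms of an element of $\cA(G_n)$ jump to $F_{2n-6}$, and there is at most one odd-index term, which is the overall leading term when present).
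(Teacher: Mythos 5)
Your proposal follows essentially the same route as the paper: write both summands in Zeckendorf form via Corollary~\ref{Cor:A}, note that each has leading term $F_{2n-2}$ so the doubled term reduces as $2F_{2n-2}=F_{2n-1}+F_{2n-4}$, and then case on the second terms (equivalently on $a,b\in\{2,3,\geq 4\}$), using the truncation principle that tails below $F_k$ cannot disturb terms above $F_{k+2}$. The remaining work is the mechanical carry bookkeeping you defer (and note that your passing remarks about the $a=5$ case and the first reduction of $2F_{2n-5}$ are slightly off --- the second term for odd $a\geq 5$ is still $F_{2n-6}$, and $2F_{2n-5}=F_{2n-4}+F_{2n-7}$ --- but these do not affect the structure of the argument, which matches the paper's).
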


\begin{proof}
By Corollary~\ref{Cor:A} we have
\[
F_{a}F_{2n-a} + F_{b}F_{2n-b} = \sum_{r=1}^{\lfloor \frac{a}{2}\rfloor} F_{2n-4r+2}+ \sum_{r=1}^{\lfloor \frac{b}{2}\rfloor}F_{2n-4r+2}+\left(\varepsilon_a F_{2n-2a+1}+\varepsilon_b F_{2n-2b+1}\right),
\]
where
\[
\varepsilon_a = \begin{cases}
        0 & \text{ if $a$ is even } \\
        1& \text{ if $a$ is odd.}
    \end{cases}
\]
Combining the like terms, the above is equal to
\begin{align*}
&= \sum_{r=1}^{\lfloor \frac{a}{2}\rfloor} 2F_{2n-4r+2} + \sum_{r=\lfloor\frac{a}{2}\rfloor + 1 }^{\lfloor \frac{b}{2}\rfloor} F_{2n-4r+2} +\left(\varepsilon_a F_{2n-2a+1} + \varepsilon_b F_{2n-2b+1} \right)\\
&= \sum_{r=1}^{\lfloor \frac{a}{2}\rfloor} (F_{2n-4r+3} + F_{2n-4r}) + \sum_{r=\lfloor\frac{a}{2}\rfloor + 1}^{\lfloor \frac{b}{2}\rfloor} F_{2n-4r+2} +\left(\varepsilon_a F_{2n-2a+1} + \varepsilon_b F_{2n-2b+1}\right).
\end{align*}
If $a=2$ and $b=3$, then the expression above is $F_{2n-1} + F_{2n-4} + F_{2n-5} = F_{2n-1} + F_{2n-3}$.  If $a=b=3$, then it is equal to $F_{2n-1} + F_{2n-4} + 2F_{2n-5} = F_{2n-1} + F_{2n-3} + F_{2n-5}$.  If $a=2$ and $b\neq 3$, the leading terms of the above expression are $F_{2n-1} + F_{2n-4}$, and if $b>3$ the next term is $F_{2n-6}$. Moreover, if the next possible largest term is $F_{2n-9}$. Similarly, if $a=3$ and $b>3$, then the leading terms of the above expression are $F_{2n-1} + F_{2n-4} + F_{2n-5} +F_{2n-6} = F_{2n-1} + F_{2n-3} + F_{2n-6}$. The next possible largest term is $F_{2n-9}$.

Otherwise, we have that the left hand sum has leading terms $F_{2n-1} + F_{2n-4} + F_{2n-5} + F_{2n-8}$, which becomes $F_{2n-1} + F_{2n-3} + F_{2n-8}$ in Zeckendorf form.  Because the righthand sum has leading term at most $F_{2n-9}$, it does not affect the first two leading terms, so in this case, we have leading terms $F_{2n-1} + F_{2n-3}$.
\end{proof}

We now turn to the elements of $2\cA(G_n)$ that are not described by Lemma~\ref{Lem:MostOf2A}.

\begin{lemma}
\label{Lem:AddF2n-3}
If $a\geq 4$, the leading term of $F_{2n-1} + F_{a}F_{2n-a}$ in Zeckendorf form is $F_{2n-6}$, with the next possible leading term either $F_{2n-9}$ or $F_{2n-10}$.
\end{lemma}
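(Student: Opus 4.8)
The plan is to read off both assertions directly from the Zeckendorf form of $F_a F_{2n-a}$ supplied by Corollary~\ref{Cor:A}, using only the elementary congruence $F_{2n-1} + F_{2n-2} = F_{2n} \equiv 0 \pmod{F_{2n}}$ and the no-carrying principle from the start of Section~\ref{Sec:2A}.

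First I would record what Corollary~\ref{Cor:A} gives when $a \geq 4$: the leading term of the Zeckendorf form of $F_a F_{2n-a}$ is $F_{2n-2}$ (the $r=1$ term), so we may write $F_a F_{2n-a} = F_{2n-2} + s$, where $s$ collects the remaining terms. Inspecting the sum in Corollary~\ref{Cor:A}, the leading term of $s$ is the $r=2$ term $F_{2n-6}$, and the next term of $s$, when it exists, is: nothing if $a = 4$; the extra odd-index term $F_{2n-2\cdot 5+1} = F_{2n-9}$ if $a = 5$; and the $r=3$ term $F_{2n-10}$ if $a \geq 6$ (for odd $a \geq 7$ the extra term $F_{2n-2a+1} \leq F_{2n-13}$ is smaller still, so it does not interfere). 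In particular the displayed expansion of $s$ is a genuine non-consecutive Zeckendorf form, and $s < F_{2n-5}$.

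Next I would compute, working with integer representatives, $F_{2n-1} + F_a F_{2n-a} = F_{2n-1} + F_{2n-2} + s = F_{2n} + s$. Since $0 \le s < F_{2n-5} < F_{2n}$, the representative of this class in $[0, F_{2n})$ is exactly $s$, so the Zeckendorf form of $F_{2n-1} + F_a F_{2n-a}$ equals that of $s$. Therefore its leading term is $F_{2n-6}$ and the next leading term, when present, is $F_{2n-9}$ (for $a = 5$) or $F_{2n-10}$ (for $a \geq 6$), which is the claim.

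The only point requiring care is that adding $F_{2n-1}$ produces no carrying into the tail: the gap between $F_{2n-2}$ and the leading term $F_{2n-6}$ of $s$ is $4 > 2$, so the carry $F_{2n-1}+F_{2n-2} \to F_{2n}$ does not propagate into the lower-order terms of $s$, exactly as in the principle highlighted at the beginning of Section~\ref{Sec:2A}. I do not expect any genuine obstacle beyond this bookkeeping; alternatively, one could argue by induction on $a$ via the recursion $\kappa_{1,k}(G_n) = F_{2n-2} + \kappa_{1,k-2}(G_{n-2})$ of Lemma~\ref{Lem:Kappa2}, but the direct computation above is shorter.
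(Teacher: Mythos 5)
Your proposal is correct and is essentially the paper's own argument: both write $F_aF_{2n-a}$ in the Zeckendorf form of Corollary~\ref{Cor:A}, cancel the leading $F_{2n-2}$ against $F_{2n-1}$ via $F_{2n-1}+F_{2n-2}=F_{2n}\equiv 0$, and read off that the remaining sum starts at the $r=2$ term $F_{2n-6}$ with next term $F_{2n-9}$ (from $\varepsilon_5 F_{2n-9}$ when $a=5$) or $F_{2n-10}$ (the $r=3$ term when $a\geq 6$). The case analysis on $a=4$, $a=5$, and $a\geq 6$ matches the paper's exactly.
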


\begin{proof}
Let $a\geq 4$.  We note
\begin{align*}
    F_{2n-1} + F_{a}F_{2n-a} &= F_{2n-1} + \left(\sum_{r=1}^{\lfloor \frac{a}{2}\rfloor} F_{2n-4r+2} \right) +\varepsilon_a F_{2n-2a+1}\\
    &= \left(\sum_{r=2}^{\lfloor \frac{a}{2}\rfloor} F_{2n-4r+2} \right) + \varepsilon_a F_{2n-2a+1} \pmod{F_{2n}}.
    \end{align*}
If $a=4$, then $\varepsilon_a = 0$, and if $a \geq 5$, then $F_{2n-2a+1}$ is at most $F_{2n-9}$.  As such, the leading term is unaffected by this term, so the leading term is $F_{2n-6}$. Moreover, if $a=5$, the second leading term is $F_{2n-9}$. Otherwise, we have that the second leading term is $F_{2n-10}$, since we obtain the term with $r = 3$.
\end{proof}

Below we list the remaining elements of $2\cA(G_n)$ not covered by one of the previous cases.
\begin{lemma}
\label{Lem:Remaining2A}
We have:
\begin{itemize}
    \item $2F_{2n-1} = F_{2n-3} \pmod{F_{2n}}$
    \item $F_{2n-1} + F_{2n-2} = 0 \pmod{F_{2n}}$
    \item $F_{2n-1} + 2F_{2n-3} = F_{2n-5} \pmod{F_{2n}}.$
\end{itemize}
\end{lemma}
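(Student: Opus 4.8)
The plan is to verify all three congruences by direct manipulation of the Fibonacci recurrence $F_m = F_{m-1} + F_{m-2}$, reducing modulo $F_{2n}$ at the end. The single idea underlying every case is the identity $F_{2n} = F_{2n-1} + F_{2n-2}$, which already gives the second bullet, and which more generally lets one rewrite any $F_{2n-j}$ with $j$ small as an explicit integer combination of a fixed pair of Fibonacci numbers (for instance $F_{2n-4}$ and $F_{2n-5}$) plus a multiple of $F_{2n}$.

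Concretely, for the first bullet I would use $F_{2n-1} = F_{2n-2} + F_{2n-3}$ to write
\[
2F_{2n-1} = (F_{2n-1} + F_{2n-2}) + F_{2n-3} = F_{2n} + F_{2n-3} \equiv F_{2n-3} \pmod{F_{2n}}.
\]
For the third bullet I would expand everything in terms of $F_{2n-4}$ and $F_{2n-5}$: iterating the recurrence gives $F_{2n-3} = F_{2n-4} + F_{2n-5}$, $F_{2n-1} = 3F_{2n-4} + 2F_{2n-5}$, and $F_{2n} = 5F_{2n-4} + 3F_{2n-5}$, so that
\[
F_{2n-1} + 2F_{2n-3} = 5F_{2n-4} + 4F_{2n-5} = F_{2n} + F_{2n-5} \equiv F_{2n-5} \pmod{F_{2n}}.
\]

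There is essentially no obstacle here: the statements are elementary, and the only point requiring any care is that $n$ be large enough (namely $n \geq 3$) that every index appearing, down to $2n-5$, is nonnegative, which holds in all applications of this lemma. One could alternatively deduce the third identity from the first, since the first gives $F_{2n-3} \equiv 2F_{2n-1}$, hence $F_{2n-1} + 2F_{2n-3} \equiv 5F_{2n-1}$, and the same iteration shows $F_{2n-5} \equiv 5F_{2n-1}$ as well; but the direct expansion above is the cleanest to write.
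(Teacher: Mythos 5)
Your proof is correct; the paper states this lemma without proof, treating it as an elementary consequence of the Fibonacci recurrence, and your direct verification via $F_{2n}=F_{2n-1}+F_{2n-2}$ is exactly the intended computation.
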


We summarize the results of this subsection in the following corollary.

\begin{corollary}
\label{Cor:2A}
Let $D \in 2\cA(G_n)$.  Then either:
\begin{enumerate}
\item  $D \in \cA(G_n)$,
\item  $D = F_{2n-3}$,
\item  $D = F_{2n-5}$,
\item  $D$ has leading term $F_{2n-6}$, followed by either $F_{2n-9}$ or $F_{2n-10}$.
\item  $D$ has leading term $F_{2n-1}$, followed by either $F_{2n-3}$ or $F_{2n-4}$.  Moreover:
\begin{enumerate}
\item  if the leading terms are $F_{2n-1} + F_{2n-3} + F_{2n-5}$, then $D = F_{2n-1} + F_{2n-3} + F_{2n-5}$, and
\item  if the leading terms are $F_{2n-1} + F_{2n-4}$, then either $D = F_{2n-1} + F_{2n-4}$ or the next term is $F_{2n-6}$; if there is another term, it is at most $F_{2n-9}.$
\item if the leading terms are $F_{2n-1}+F_{2n-3}+F_{2n-6}$, then the next term is at most $F_{2n-9}$.
\end{enumerate}
\end{enumerate}
\end{corollary}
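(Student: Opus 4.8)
The proof will be pure bookkeeping: write an arbitrary $D \in 2\cA(G_n)$ as $D = x + y$ with $x, y \in \cA(G_n)$, and split into cases according to the Zeckendorf ``type'' of each summand as recorded in Corollary~\ref{Cor:A}. That corollary tells us that every element of $\cA(G_n)$ is either $0$, or $F_{2n-1}$ (the case $k=1$), or else has leading Zeckendorf term $F_{2n-2}$ (every other nonzero element, i.e. $k \geq 2$); and since a Zeckendorf sum with largest part $F_m$ has value in $[F_m, F_{m+1})$, these three types really do partition $\cA(G_n)$. So up to reordering there are six possibilities for the pair $\{x,y\}$, and the whole argument consists of routing each one to the appropriate lemma of this subsection.

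First I would dispose of the degenerate cases. If $x = 0$ or $y = 0$, then $D$ equals the other summand, so $D \in \cA(G_n)$ and we are in conclusion~(1). If $x = y = F_{2n-1}$, the first bullet of Lemma~\ref{Lem:Remaining2A} gives $D = F_{2n-3}$, which is conclusion~(2). Next I would handle the mixed case in which exactly one summand, say $x$, equals $F_{2n-1}$ while $y = F_k F_{2n-k}$ with $k \geq 2$, splitting on $k$: when $k = 2$ we have $y = F_{2n-2}$ and the second bullet of Lemma~\ref{Lem:Remaining2A} gives $D = 0 \in \cA(G_n)$, conclusion~(1); when $k = 3$ we have $y = 2F_{2n-3}$ and the third bullet gives $D = F_{2n-5}$, conclusion~(3); and when $k \geq 4$, Lemma~\ref{Lem:AddF2n-3} gives that $D$ has leading term $F_{2n-6}$ followed by $F_{2n-9}$ or $F_{2n-10}$, conclusion~(4).

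Finally, if neither summand is $0$ nor $F_{2n-1}$, then both have index at least $2$; after labelling the indices $2 \leq a \leq b \leq n$, Lemma~\ref{Lem:MostOf2A} applies directly and delivers conclusion~(5), including the three sub-cases~(a)--(c) (note that when $a=b=3$ that lemma's computation shows $D$ is exactly $F_{2n-1}+F_{2n-3}+F_{2n-5}$, which is what sub-case~(a) asserts). Collecting the cases gives the corollary.

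The only point that needs genuine care — and the place I would slow down — is checking that the case division above is exhaustive and that the small indices $k = 2, 3$ are sent to Lemma~\ref{Lem:Remaining2A} rather than to Lemma~\ref{Lem:AddF2n-3}: the hypothesis $a \geq 4$ in the latter is exactly what guarantees the low-order cross-term $F_{2n-2a+1}$ sits strictly below the displayed leading terms and so does not perturb them. Beyond this there is nothing to compute; every numerical claim in the corollary has already been established in one of the three preceding lemmas, and assembling them is routine.
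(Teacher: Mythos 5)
Your proposal is correct and matches the paper's intent exactly: the paper presents Corollary~\ref{Cor:2A} as a direct summary of Lemmas~\ref{Lem:MostOf2A}, \ref{Lem:AddF2n-3}, and~\ref{Lem:Remaining2A}, and your case split on whether each summand is $0$, $F_{2n-1}$, or $F_kF_{2n-k}$ with $k \geq 2$ (with the further split $k=2$, $k=3$, $k\geq 4$ in the mixed case) is precisely the exhaustive routing that assembles those lemmas into the stated conclusions. No gaps.
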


The following lemma will not be used in this section, but we will need it in Section~\ref{Sec:MainThm}.

\begin{lemma}
\label{Lem:Sp case 1}
    If $D\in 2 \cA(G_n)$ has leading terms $F_{2n-1} + F_{2n-3} + F_{2n-8}$, then the next possible leading term is $F_{2n-10}$.
\end{lemma}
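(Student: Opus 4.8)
The plan is to reduce to the Zeckendorf-arithmetic computation already carried out in the proof of Lemma~\ref{Lem:MostOf2A} and then chase one further term. Since $D$ has leading term $F_{2n-1}$, Corollary~\ref{Cor:2A} puts $D$ into case~(5), so $D = F_aF_{2n-a} + F_bF_{2n-b}$ for some $2 \le a \le b \le n$. Its second leading term is $F_{2n-3}$ (not $F_{2n-4}$) and its third leading term is $F_{2n-8}$ (neither $F_{2n-5}$ nor $F_{2n-6}$), so none of the sub-cases (a)--(c) of Corollary~\ref{Cor:2A} applies. Comparing with the case analysis in the proof of Lemma~\ref{Lem:MostOf2A}, every pair with $a \le 3$ is excluded (these give a two-term answer $F_{2n-1}+F_{2n-3}$, or leading terms $F_{2n-1}+F_{2n-4}$, or third leading term $F_{2n-5}$ or $F_{2n-6}$). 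Hence $a \ge 4$, and so $b \ge 4$ as well.

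Next I would write out, exactly as in that proof,
\[
D = \sum_{r=1}^{\lfloor a/2\rfloor}\bigl(F_{2n-4r+3}+F_{2n-4r}\bigr) + \sum_{r=\lfloor a/2\rfloor+1}^{\lfloor b/2\rfloor} F_{2n-4r+2} + \varepsilon_a F_{2n-2a+1} + \varepsilon_b F_{2n-2b+1},
\]
using Corollary~\ref{Cor:A} and the identity $2F_m = F_{m+1}+F_{m-2}$, where $\varepsilon_a,\varepsilon_b$ are as in the proof of Lemma~\ref{Lem:MostOf2A}. The $r=1$ and $r=2$ terms of the first sum contribute $F_{2n-1}+F_{2n-4}+F_{2n-5}+F_{2n-8}$, which after the carry $F_{2n-4}+F_{2n-5}=F_{2n-3}$ already produces the asserted leading terms $F_{2n-1}+F_{2n-3}+F_{2n-8}$ --- provided nothing merges with the $F_{2n-8}$. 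The key observation is that $F_{2n-8}$ and $F_{2n-9}$ are consecutive Fibonacci numbers, and a summand $F_{2n-9}$ occurs exactly when $a \ge 6$ (from the $r=3$ term $F_{2n-9}+F_{2n-12}$), when $a = 5$ (from $\varepsilon_a F_{2n-2a+1} = F_{2n-9}$), or when $b = 5$ (from $\varepsilon_b F_{2n-2b+1} = F_{2n-9}$); in each such case the carry $F_{2n-8}+F_{2n-9} = F_{2n-7}$ turns the third leading term into $F_{2n-7}$, contradicting the hypothesis. Therefore $a = 4$ and $b \ne 5$.

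Finally, with $a = 4$ (so $\lfloor a/2\rfloor = 2$ and $\varepsilon_a = 0$) the expansion collapses to
\[
D = F_{2n-1}+F_{2n-4}+F_{2n-5}+F_{2n-8} + \sum_{r=3}^{\lfloor b/2\rfloor} F_{2n-4r+2} + \varepsilon_b F_{2n-2b+1}.
\]
If $b = 4$ the trailing sum and the $\varepsilon_b$ term vanish and $D = F_{2n-1}+F_{2n-3}+F_{2n-8}$ exactly; if $b \ge 6$, the largest remaining summand is the $r=3$ term $F_{2n-10}$, while $\varepsilon_b F_{2n-2b+1} \le F_{2n-13}$, and the indices $2n-8,\, 2n-10,\, 2n-14,\ldots$ (together with the $\varepsilon_b$ index) are pairwise non-consecutive, so no further carry occurs. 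In either case the term of the Zeckendorf form of $D$ immediately following $F_{2n-8}$, if any, is $F_{2n-10}$, which is the claim. The only genuine work is the carry bookkeeping in these last two paragraphs; the subtlety worth isolating is the consecutive pair $F_{2n-8},F_{2n-9}$, since it is precisely the appearance of an $F_{2n-9}$ that simultaneously rules out $a \ge 5$ and $b = 5$ and, once $a = 4$, prevents the next tail term from exceeding $F_{2n-10}$.
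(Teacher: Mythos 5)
Your proof is correct and follows essentially the same route as the paper: both reduce to the observation that $D$ must be a sum of two elements of $\cA(G_n)$ with leading terms $F_{2n-2}+F_{2n-6}$ (i.e.\ $a,b\ge 4$), and both rule out the appearance of an $F_{2n-9}$ summand via the carry $F_{2n-8}+F_{2n-9}=F_{2n-7}$, leaving only $a=4$ with $b=4$ or $b\ge 6$. Your version is somewhat more explicit, parametrizing by the indices $(a,b)$ and verifying non-adjacency of the tail indices, where the paper argues directly in terms of the possible next leading terms of the two summands.
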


\begin{proof}
From the proof of Lemma~\ref{Lem:MostOf2A}, we have that elements in $D\in 2 \cA(G_n)$ have possible leading terms $F_{2n-1} + F_{2n-3} + F_{2n-8}$ if they are the sum of two elements $D_1+D_2$ with leading terms $F_{2n-2} + F_{2n-6}$. If either has next term $F_{2n-9}$, then we have that the leading terms of $D$ would be $F_{2n-1} + F_{2n-3} + F_{2n-7}$ or $F_{2n-1} + F_{2n-3} + F_{2n-6}$.  In a similar manner, if both have next term $F_{2n-10}$, then $D$ would have leading terms $F_{2n-1} + F_{2n-3} + F_{2n-7}$.  As such, the only remaining case is $D_1 = F_{2n-2} + F_{2n-6}$ and, $D_2 = D_1$ or $D_2$ has leading terms $F_{2n-2} + F_{2n-6} + F_{2n-10}$.  Thus, if $D$ has leading terms $F_{2n-1} + F_{2n-3} + F_{2n-8}$, then the next possible leading term is $F_{2n-10}$.
\end{proof}


    
    
    
    

\subsection{A Stronger Lower Bound on the Gonality of Strip Graphs}
\label{Sec:StripStrongerBound}

To compute the gonality of $G_n$, it will be helpful to use the Zeckendorf representation of some elements of $-\cA(G_n) \pmod{F_{2n}}$. Below we leave a table:

\begin{center}
\begin{tabular}{|c|c|}
\hline 
   $D$  & $-D$ \\
   \hline
   $F_{2n-1}$  & $F_{2n-2}$ \\
   \hline
   $F_{2n-2}$ & $F_{2n-1}$\\
   \hline
   $2F_{2n-3}$ & $F_{2n-2} + F_{2n-4}$\\
   \hline
   $3F_{2n-4}$ & $F_{2n-2} + F_{2n-4} + F_{2n-7}$\\
   \hline 
   $5F_{2n-5}$ & $F_{2n-2} + F_{2n-4} + F_{2n-8}$\\
   \hline
   $8F_{2n-6}$ & $F_{2n-2} + F_{2n-4} + F_{2n-8} + F_{2n-11}$\\
   \hline 
   $13F_{2n-7}$ & $F_{2n-2} + F_{2n-4} + F_{2n-8} + F_{2n-12}$\\
   \hline
\end{tabular}
\end{center}

We now show that $\gon(G_n) \geq 4$ when $n \geq 6$.  Our approach will be similar to that of Theorem~\ref{Thm:BoundOf4}, using our description of the set $2\cA(G_n)$.

\begin{theorem}
\label{Thm:BoundOf4}
If $n \geq 6$, then $\gon(G_n) \geq 4$.
\end{theorem}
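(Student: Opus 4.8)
The plan is to mimic the proof of Theorem~\ref{Thm:BoundOf3}, but now using Corollary~\ref{Cor:2A} instead of merely Corollary~\ref{Cor:A}. By Corollary~\ref{Cor:Gonality} it suffices to show that, for $n \geq 6$, there is no divisor $D \in \Jac(G_n)$ with $D - \cA(G_n) \subseteq 2\cA(G_n)$. As noted after Corollary~\ref{Cor:Gonality}, any such $D$ must itself lie in $2\cA(G_n)$, so I can run through the cases of Corollary~\ref{Cor:2A} for $D$. For each candidate $D$, I will exhibit a specific element $x \in \cA(G_n)$ (using the explicit description $\cA(G_n) = \{F_k F_{2n-k} \mid 0 \le k \le n\}$ from Corollary~\ref{Cor:SetA}, together with the Zeckendorf forms in Corollary~\ref{Cor:A}) such that $D - x \notin 2\cA(G_n)$, the obstruction being detected by comparing leading terms of Zeckendorf forms against the list in Corollary~\ref{Cor:2A}. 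It is here that the hypothesis $n \geq 6$ enters: it guarantees that the relevant Fibonacci indices $2n-1, 2n-2, \dots$ down to roughly $2n-12$ are all distinct and positive, so the Zeckendorf bookkeeping is valid and no accidental wraparound modulo $F_{2n}$ collapses the cases.

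Concretely, the case analysis on $D$ should go roughly as follows. If $D \in \cA(G_n)$, I reuse the argument of Theorem~\ref{Thm:BoundOf3}: some $D - x$ already fails to lie in $\cA(G_n)$, and I must further check it is not one of the new elements $F_{2n-3}$, $F_{2n-5}$, or an element with leading term $F_{2n-6}$ or $F_{2n-1}$; the subtractions $D - F_{2n-2}$ and $D - 2F_{2n-3}$ (the latter using the $-\cA(G_n)$ table, so $D - 2F_{2n-3} = D + F_{2n-2} + F_{2n-4}$) should produce Zeckendorf forms incompatible with every item of Corollary~\ref{Cor:2A}. If $D = F_{2n-3}$, then $D - F_{2n-1} = F_{2n-3} + F_{2n-2}$ (from the table, $-F_{2n-1} = F_{2n-2}$), whose leading terms are $F_{2n-1} + F_{2n-6}$ or similar — anyway not matching Corollary~\ref{Cor:2A}(5); and $D + F_{2n-2} + F_{2n-4}$ handles the subtraction of $2F_{2n-3}$. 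If $D = F_{2n-5}$, subtract $F_{2n-1}$ or $2F_{2n-3}$ similarly. If $D$ has leading term $F_{2n-6}$, then $D - F_{2n-1} = D + F_{2n-2}$ has leading term $F_{2n-1}$ followed by a term around $F_{2n-6}$ with no intervening $F_{2n-3}$ or $F_{2n-4}$, contradicting case (5) of Corollary~\ref{Cor:2A} — so already subtracting the single element $F_{2n-1} \in \cA(G_n)$ kills this case. Finally, if $D$ has leading terms $F_{2n-1} + F_{2n-3}$ or $F_{2n-1} + F_{2n-4}$ (case (5)), I subtract an appropriately chosen element of $\cA(G_n)$ — most usefully $F_{2n-2}$ (whose negative is $F_{2n-1}$) to shift the leading term, or a larger $F_k F_{2n-k}$ whose Zeckendorf leading term is $F_{2n-2}$ — to land on something whose Zeckendorf form has, say, leading term $F_{2n-2}$ or an unacceptable second term, again ruled out by Corollary~\ref{Cor:2A}. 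The refined subcases (a), (b), (c) of Corollary~\ref{Cor:2A}(5) let me pin down $D$ almost exactly in the borderline situations, and then a second, carefully chosen subtraction finishes each.

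The main obstacle, as the paper itself flags, is bookkeeping rather than conceptual difficulty: I must be careful that when I subtract $x \in \cA(G_n)$ and rewrite $D - x \pmod{F_{2n}}$ in Zeckendorf form, I correctly carry out the Zeckendorf addition/subtraction (using the principle that numbers below $F_k$ sum to something below $F_{k+2}$, so only a bounded number of leading terms matter) and that I have genuinely covered every case of Corollary~\ref{Cor:2A}, including the delicate subcases of item (5) where $D$ is nearly determined. A secondary subtlety is ensuring the chosen $x$ is actually an element of $\cA(G_n)$ — i.e. of the form $F_k F_{2n-k}$ — and that for $D$ in case (5) there is always at least one such $x$ forcing a contradiction; here the involution $\iota$ from Theorem~\ref{Thm:FreimanStrip} and the symmetry $F_k F_{2n-k} = F_{n-k}F_{n+k} + F_n^2$-type identities can be invoked to reduce the number of distinct subcases to check. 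Once the table of $-\cA(G_n)$ values and Corollary~\ref{Cor:2A} are in hand, each individual verification is a short Zeckendorf computation, and assembling them proves $\gon(G_n) \geq 4$ for $n \geq 6$.
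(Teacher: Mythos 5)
Your overall strategy is the paper's: reduce via Corollary~\ref{Cor:Gonality} to showing no $D \in 2\cA(G_n)$ satisfies $D - \cA(G_n) \subseteq 2\cA(G_n)$, then case-split on the leading Zeckendorf terms of $D$ using Corollary~\ref{Cor:2A} and the table of $-\cA(G_n)$ values. However, the one nontrivial case where you commit to a specific subtraction is wrong. You claim that if $D$ has leading term $F_{2n-6}$, then $D - F_{2n-1} = D + F_{2n-2}$ has leading term $F_{2n-1}$ and is thereby excluded. In fact, since $F_{2n-6} \le D < F_{2n-5}$, we get $F_{2n-2} + F_{2n-6} \le D + F_{2n-2} < F_{2n-1}$, so the leading term is $F_{2n-2}$, followed by $F_{2n-6}$ --- and $F_{2n-2} + F_{2n-6}$ is exactly the Zeckendorf form of $F_4F_{2n-4} \in \cA(G_n)$. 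Worse, by Lemma~\ref{Lem:AddF2n-3} every element of $2\cA(G_n)$ with leading term $F_{2n-6}$ is of the form $F_{2n-1} + F_kF_{2n-k}$ with $k \ge 4$, so $D - F_{2n-1} = F_kF_{2n-k} \in \cA(G_n) \subseteq 2\cA(G_n)$ \emph{always} holds here: this subtraction can never yield a contradiction. The paper handles this case by subtracting $F_{2n-2}$ instead (i.e., adding $F_{2n-1}$), landing on leading terms $F_{2n-1} + F_{2n-5}$ or $F_{2n-1} + F_{2n-6}$, which Corollary~\ref{Cor:2A}(5) forbids.

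More broadly, in a proof of this type the content lies precisely in exhibiting, for \emph{every} case, a witness $x \in \cA(G_n)$ with $D - x \notin 2\cA(G_n)$, and you leave most of these unspecified (``subtract an appropriately chosen element,'' ``should produce Zeckendorf forms incompatible with every item''). Several cases require witnesses you do not name and that are not easy to guess: the paper needs $x = 5F_{2n-5}$ for $D = F_{2n-2}$ and $x = 3F_{2n-4}$ for $D = 2F_{2n-3}$, for $D = F_{2n-3}$, and for the residual element $D = F_{2n-1}+F_{2n-3}$ surviving case (5); the naive choices $F_{2n-2}$ and $2F_{2n-3}$ you propose do not suffice there (e.g.\ $F_{2n-2} - 2F_{2n-3} = 2F_{2n-1} \in 2\cA(G_n)$ and $F_{2n-2} - F_{2n-2} = 0 \in 2\cA(G_n)$). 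So while the framework is right, the proposal as written contains a step that fails and defers the essential verifications; it is not yet a proof.
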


\begin{proof}
We show that, for $n \geq 6$, there does not exist an element $D \in 2\cA(G_n)$ such that $D-x \in 2\cA(G_n)$ for all $x \in \cA(G_n)$.  It will then follow from Corollary~\ref{Cor:Gonality} that $\gon(G_n) \geq 4$ for $n \geq 6$.  We again break this into cases.  The first 5 cases cover elements of $\cA(G_n)$, the next 2 cover elements of $F_{2n-1} + \cA(G_n)$, and the last case covers all the remaining elements of $2\cA(G_n)$.

\begin{enumerate}

    \item If $D=0$, then $D - 2F_{2n-3} = F_{2n-2} + F_{2n-4}$.  By Corollary~\ref{Cor:2A}, if $D \in 2\cA(G_n)$ has leading term $F_{2n-2}$, then $D \in \cA(G_n)$.  By Corollary~\ref{Cor:A}, if an element has leading term $F_{2n-2}$, then the next largest even-index term is $F_{2n-6}$.  As such, no element in $2\cA(G_n)$ has leading terms $F_{2n-2} + F_{2n-4}$.  Hence $D - 2F_{2n-3} \notin 2\cA(G_n).$

    \item If $D = F_{2n-1}$, then $D - 2F_{2n-3} = D + F_{2n-2} + F_{2n-4} = F_{2n-4}$.  By Corollary~\ref{Cor:2A}, no element in $2\cA(G_n)$ has leading term $F_{2n-4}$, so $D - 2F_{2n-3} \notin 2\cA(G_n).$

    \item  If $D = F_{2n-2}$, then $D - 5F_{2n-5}$ has leading terms $F_{2n-1} + F_{2n-3} + F_{2n-6} + F_{2n-8}$. By Corollary~\ref{Cor:2A}, if an element has these first 3 leading terms in $2\cA(G_n)$, then the next term is at most $F_{2n-9}$. Since $F_{2n-8}>F_{2n-9}$, we see that $D-5F_{2n-5}$ is not in $2\cA(G_n)$.

    \item  If $D = 2F_{2n-3}$, then $D - 3F_{2n-4} = F_{2n-7}$, which is not in $2\cA(G_n)$ by Corollary~\ref{Cor:2A}.

    \item If $D \in \cA(G_n) \smallsetminus \{0, F_{2n-1}, F_{2n-2}, 2F_{2n-3} \}$, then $D = F_{k}F_{2n-k}$ with $k \geq 4$.  By Corollary~\ref{Cor:A}, its leading terms are $F_{2n-2} + F_{2n-6}$. As such, $D - 2F_{2n-3} = D + F_{2n-2} + F_{2n-4}$ has leading terms $F_{2n-1} + F_{2n-3} + F_{2n-5}$. By Corollary~\ref{Cor:2A}, the only element with these leading terms in $2\cA(G_n)$ is $4F_{2n-3}$.  If this is the case, then $D = 6F_{2n-3}$.  However, $6F_{2n-3} = F_{2n-2} + F_{2n-7}$, which is not in $2\cA(G_n)$ by Corollary~\ref{Cor:2A}.
    
    \item If $D = 2F_{2n-1} = F_{2n-3}$, then $D - 3F_{2n-4} = D + F_{2n-2} + F_{2n-4} + F_{2n-7} = F_{2n-1} + F_{2n-4} + F_{2n-7}$.  By Corollary~\ref{Cor:2A}, if an element has leading terms $F_{2n-1} + F_{2n-4}$, then the next term is $F_{2n-6}$. Thus, $D - 3F_{2n-4} \notin 2\cA(G_n)$.

    \item If $D = F_{2n-1} + F_{k}F_{2n-k}$ with $k \geq 3$, then by Lemma~\ref{Lem:AddF2n-3}, $D - F_{2n-2}$ has leading terms $F_{2n-1} + F_{2n-5}$ (if $k=3$) or $F_{2n-1} + F_{2n-6}$ (if $k \geq 4)$.  By Corollary~\ref{Cor:2A},  there is no element in $2\cA(G_n)$ with these leading terms.

    \item  Finally, assume that $D$ is a sum of two elements of $\cA(G_n)$, neither of which is $0$ or $F_{2n-1}$.  By Lemma~\ref{Lem:MostOf2A}, $D$ has leading terms $F_{2n-1} + F_{2n-3}$ or $F_{2n-1} + F_{2n-4}$. Then $D - F_{2n-1}$ has leading term $F_{2n-3}$ or $F_{2n-4}$.  By Corollary~\ref{Cor:2A}, there is no element of $2\cA(G_n)$ with leading term $F_{2n-4}$, and the only element of $2\cA(G_n)$ with leading term $F_{2n-3}$ is $F_{2n-3}$ itself.  It follows that $D = F_{2n-1} + F_{2n-3}$. However, $D-3F_{2n-4} = F_{2n-2} + F_{2n-7}$. Hence $D-3F_{2n-4} \notin 2\cA(G_n)$ by Corollary~\ref{Cor:2A}.
    \end{enumerate}
\end{proof}

\section{The Gonality of Strip Graphs}
\label{Sec:MainThm}

In this section, we prove Theorem~\ref{Thm:MainThm}.  Our proof follows the same strategy as that of Theorems~\ref{Thm:BoundOf3} and~\ref{Thm:BoundOf4}, though there are many more cases.  We start by showing that $\gon (G_n) \leq 5$ for all $n$.

\subsection{A Divisor of Rank 1}
\label{Sec:UpperBound}
We now find a divisor of degree 5 and rank at least 1 on the strip graph $G_n$.  This shows that the gonality of $G_n$ is at most 5.

\begin{lemma}
\label{Lem:UpperBound}
For $3 \leq k \leq n$, we have
\[
F_{2n} + 2F_{2n-1} - F_{k}F_{2n-k} = F_{k-2}F_{2n-k+2} + 3F_{k-1}F_{2n-k+1}.
\]
\end{lemma}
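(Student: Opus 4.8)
The plan is to prove this purely as a Fibonacci identity, reducing both sides to a common expression in the four products $F_{2n-k}F_{k-1}$, $F_{2n-k}F_k$, $F_{2n-k+1}F_{k-1}$, $F_{2n-k+1}F_k$ (equivalently, expanding everything in the ``basis'' $\{F_{2n-k}, F_{2n-k+1}\}$ for the large index and $\{F_{k-1}, F_k\}$ for the small one). Since $3 \le k \le n$, every index that appears is positive, so no special cases arise.

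First I would rewrite the left-hand side using $F_{2n} + 2F_{2n-1} = (F_{2n}+F_{2n-1}) + F_{2n-1} = F_{2n+1} + F_{2n-1}$, so that
\[
F_{2n} + 2F_{2n-1} - F_k F_{2n-k} = \big(F_{2n+1} - F_k F_{2n-k}\big) + F_{2n-1}.
\]
Next I would apply the addition formula $F_{a+b} = F_a F_{b+1} + F_{a-1}F_b$ twice. With $(a,b) = (k+1,\, 2n-k)$ it gives $F_{2n+1} = F_{k+1}F_{2n-k+1} + F_k F_{2n-k}$, so the parenthesized term collapses to $F_{k+1}F_{2n-k+1}$; with $(a,b) = (k-1,\, 2n-k)$ it gives $F_{2n-1} = F_{k-1}F_{2n-k+1} + F_{k-2}F_{2n-k}$. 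Combining these, the left-hand side becomes $(F_{k+1}+F_{k-1})F_{2n-k+1} + F_{k-2}F_{2n-k}$.

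For the right-hand side I would only need the recursion $F_{2n-k+2} = F_{2n-k+1} + F_{2n-k}$, which rewrites $F_{k-2}F_{2n-k+2} + 3F_{k-1}F_{2n-k+1}$ as $(F_{k-2}+3F_{k-1})F_{2n-k+1} + F_{k-2}F_{2n-k}$. The two sides now have the same coefficient of $F_{2n-k}$, and matching the coefficients of $F_{2n-k+1}$ reduces to the trivial identity $F_{k+1}+F_{k-1} = (F_k + F_{k-1}) + F_{k-1} = F_k + 2F_{k-1} = (F_{k-1}+F_{k-2}) + 2F_{k-1} = F_{k-2} + 3F_{k-1}$, which finishes the proof.

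There is no real obstacle here; the only thing requiring care is selecting the form of the Fibonacci addition formula that makes the cancellation of $F_k F_{2n-k}$ in $F_{2n+1} - F_k F_{2n-k}$ immediate, so the entire argument stays at the level of routine bookkeeping. An alternative route — fixing $n$ and inducting on $k$ via a three-term recursion in the spirit of Lemma~\ref{Lem:Kappa2} and Lemma~\ref{Lem:FibProduct} — is also available, but it generates more cases and is less transparent, so I would not use it.
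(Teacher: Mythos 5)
Your proof is correct, and it takes a genuinely different route from the paper's. The paper fixes $n$ and inducts on $k$: the base case $k=3$ is checked directly, and the inductive step is a long chain of substitutions using only the basic recursion $F_{m+1}=F_m+F_{m-1}$. You instead prove the identity in one pass by invoking the addition formula $F_{a+b}=F_aF_{b+1}+F_{a-1}F_b$ twice — once to collapse $F_{2n+1}-F_kF_{2n-k}$ to $F_{k+1}F_{2n-k+1}$ and once to expand $F_{2n-1}$ — so that both sides become the same linear combination of $F_{2n-k+1}$ and $F_{2n-k}$, with the remaining check being the elementary identity $F_{k+1}+F_{k-1}=F_{k-2}+3F_{k-1}$. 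All of your applications of the addition formula are legitimate in the range $3\le k\le n$ (every index involved is nonnegative), and I verified the endpoints numerically. Your argument is shorter and more transparent: it explains the identity as two expansions of the same quantity rather than verifying it term by term. What the paper's induction buys is uniformity of method — it uses only the defining recursion and matches the recursive style of Lemmas~\ref{Lem:Kappa2} and~\ref{Lem:FibProduct} — but at the cost of the ``tedious calculation'' the authors themselves acknowledge. Either proof is acceptable; yours would be a clean replacement.
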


\begin{proof}
We fix $n$ and prove this by induction on $k$.  For the base case $k=3$, we have
\[
F_{2n} + 2F_{2n-1} - 2F_{2n-3} = 2F_{2n-1} + 2F_{2n-2} - F_{2n-3} = F_{2n-1} + 3F_{2n-2}.
\]
Now, assume the equation holds for $k$.  We will prove it for $k+1$ by the following tedious calculation.
\begin{align*}
&\mathcolor{white}{=} F_{2n} + 2F_{2n-1} - F_{k+1}F_{2n-k-1} \\
&= F_{2n} + 2F_{2n-1} - F_{k}F_{2n-k-1} - F_{k-1}F_{2n-k-1} \\
	&= F_{2n} + 2F_{2n-1} + F_{k}F_{2n-k-2} - F_{k}F_{2n-k} - F_{k-1}F_{2n-k-1}\\
	&= F_{k-2}F_{2n-k+2} + 3F_{k-1}F_{2n-k+1} + F_{k}F_{2n-k-2} - F_{k-1}F_{2n-k-1}
\end{align*}
where the last equality holds by inductive hypothesis.  Now, the above is equal to
\begin{align*}
	&\mathcolor{white}{=} 3F_{k-1}F_{2n-k+1} + F_{k-2}F_{2n-k+1} + F_{k-2}F_{2n-k} + F_{k}F_{2n-k-2} - F_{k-1}F_{2n-k-1}\\
	&= 2F_{k-1}F_{2n-k+1} + F_{k}F_{2n-k+1} + F_{k-2}F_{2n-k} + F_{k}F_{2n-k-2} - F_{k-1}F_{2n-k-1}\\
	&= 2F_{k-1}F_{2n-k+1} + F_{k}F_{2n-k} + F_{k}F_{2n-k-1} + F_{k-2}F_{2n-k} + F_{k}F_{2n-k-2} - F_{k-1}F_{2n-k-1}\\
	&= 2F_{k-1}F_{2n-k+1} + 2F_{k}F_{2n-k} + F_{k-2}F_{2n-k} - F_{k-1}F_{2n-k-1}\\
	&= F_{k-1}F_{2n-k+1} + 2F_{k}F_{2n-k} + F_{k-1}F_{2n-k} + F_{k-1}F_{2n-k-1} + F_{k-2}F_{2n-k} - F_{k-1}F_{2n-k-1}\\
	&= F_{k-1}F_{2n-k+1} + 2F_{k}F_{2n-k} + F_{k-1}F_{2n-k} + F_{k-2}F_{2n-k}\\
	&= F_{k-1}F_{2n-k+1} + 3F_{k}F_{2n-k}\\
\end{align*}
\end{proof}

This yields the following result.

\begin{lemma}
\label{Lem:DivisorDegree5}
If $D = 3v_0+2v_1$, then $D$ has rank at least 1.
\end{lemma}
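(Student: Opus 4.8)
The plan is to apply Proposition~\ref{Prop:Rank} with $r = 1$. Since $D = 3v_0 + 2v_1$ has degree $d = 5$, it has rank at least $1$ if and only if $(D - 5v_0) - \cA(G_n) \subseteq 4\cA(G_n)$. Now $D - 5v_0 = 2(v_1 - v_0)$, and under the isomorphism $\varphi$ of Corollary~\ref{Cor:Iso}, which sends $v_1 - v_0$ to $\kappa_{1,1}(G_n)/\kappa(G_n) = F_{2n-1}/F_{2n}$, the class $D - 5v_0$ is identified with $2F_{2n-1} \in \Z/F_{2n}\Z$. By Corollary~\ref{Cor:SetA}, it therefore suffices to show that
\[
2F_{2n-1} - F_k F_{2n-k} \in 4\cA(G_n) \qquad \text{for all } 0 \le k \le n .
\]

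For $k = 0$ the left-hand side equals $2F_{2n-1} = F_{2n-1} + F_{2n-1} \in 2\cA(G_n) \subseteq 4\cA(G_n)$, and for $k = 1$ it equals $2F_{2n-1} - F_{2n-1} = F_{2n-1} \in \cA(G_n) \subseteq 4\cA(G_n)$. For $3 \le k \le n$, I would reduce the identity of Lemma~\ref{Lem:UpperBound} modulo $F_{2n}$, so that the leading term $F_{2n}$ vanishes, to obtain
\[
2F_{2n-1} - F_k F_{2n-k} = F_{k-2}F_{2n-k+2} + 3F_{k-1}F_{2n-k+1} \pmod{F_{2n}} .
\]
Since $F_{k-2}F_{2n-k+2} = F_{k-2}F_{2n-(k-2)}$ and $F_{k-1}F_{2n-k+1} = F_{k-1}F_{2n-(k-1)}$ with $0 \le k-2 < k-1 \le n$, both summands lie in $\cA(G_n)$ by Corollary~\ref{Cor:SetA}, and hence the right-hand side is a sum of four elements of $\cA(G_n)$. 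Finally $k = 2$ falls just outside the range of Lemma~\ref{Lem:UpperBound}, so I would handle it by hand: $2F_{2n-1} - F_{2n-2} = 3F_{2n-1} \pmod{F_{2n}}$ (using $F_{2n} = F_{2n-1} + F_{2n-2}$), and $3F_{2n-1} \in 3\cA(G_n) \subseteq 4\cA(G_n)$.

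The computational heart of the argument is Lemma~\ref{Lem:UpperBound}, which is already proved, so there is no serious obstacle left; the rest is bookkeeping, namely transporting the class $D - 5v_0$ through $\varphi$ and disposing of the small cases $k \in \{0, 1, 2\}$. The one point deserving a second look is that Lemma~\ref{Lem:UpperBound} is stated for $k \ge 3$, so the identity at $k = 2$ must be checked separately (as above), and I would confirm that the statement needs no hypothesis on $n$ beyond what is required for $G_n$ and the range $0 \le k \le n$ to be meaningful.
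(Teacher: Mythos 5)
Your proposal is correct and follows essentially the same route as the paper: reduce to showing $2F_{2n-1}-F_kF_{2n-k}\in 4\cA(G_n)$ via Proposition~\ref{Prop:Rank} and the isomorphism $\varphi$, invoke Lemma~\ref{Lem:UpperBound} for $k\geq 3$, and check $k\in\{0,1,2\}$ by hand. The only cosmetic difference is at $k=2$, where you write the answer as $3F_{2n-1}\in 3\cA(G_n)$ while the paper writes it as $F_{2n-2}+2F_{2n-3}$; both are valid.
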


\begin{proof}
By Proposition~\ref{Prop:Rank}, it suffices to show that $D-5v_0 - \cA(G_n) \subseteq 4\cA(G_n)$.  Note that $D-5v_0 = 2v_1 - 2v_0$, so by Lemma~\ref{Lem:FibProduct} we have $\varphi (D-5v_0) = 2F_{2n-1}$.  By Lemma~\ref{Lem:UpperBound}, we see that $2F_{2n-1} - F_{k}F_{2n-k} \in 4\cA(G_n)$ for all $k \geq 3$.  It therefore suffices to check the cases where $k \leq 2$.  For these cases, we have
\begin{align*}
2F_{2n-1}-0 = 2F_{2n-1} &\in 2\cA(G_n) \subseteq 4\cA(G_n) \\ 
2F_{2n-1}-F_{2n-1} = F_{2n-1} &\in \cA(G_n) \subseteq 4\cA(G_n) \mbox{ and } \\
2F_{2n-1}-F_{2n-2} = F_{2n-1}-F_{2n-3} = F_{2n-2} + 2F_{2n-3} &\in 3\cA(G_n) \subseteq 4\cA(G_n).
\end{align*}
\end{proof}

Lemma~\ref{Lem:DivisorDegree5} demonstrates that the gonality of $G_n$ is at most 5.  One can also check that the divisor $D = 3v_0 + 2v_1$ has positive rank ``by hand'', using Dhar's burning algorithm to compute the $v_i$-reduced divisor equivalent to $D$ for all $i$.  We have chosen to avoid this in order to emphasize our approach.

\subsection{The Set $3A$}

In this section, we describe the Zeckendorf form of all elements of $3\cA(G_n)$.  The approach here is similar to that of Section~\ref{Sec:2A}.  In particular, we use Corollary~\ref{Cor:A} to describe the Zeckendorf form of every element of $\cA(G_n)$, and Corollary~\ref{Cor:2A} to describe that of every element of $2\cA(G_n)$, and then use Zeckendorf arithmetic to describe their sum.

\begin{lemma}
    If $D\in 2\cA(G_n)$ has leading terms $F_{2n-1} + F_{2n-4}$, then $D + F_{2n-1} \in \cA(G_n)$.
\end{lemma}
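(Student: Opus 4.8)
The plan is to undo the Zeckendorf bookkeeping and reduce the claim to the single Fibonacci identity $F_{2n} = F_{2n-1} + F_{2n-2}$. The point is that the hypothesis on the leading terms of $D$ pins down $D$ very rigidly as a two‑element sum, after which nothing but the recurrence is needed.

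First I would observe that, among all elements of $2\cA(G_n)$, the only ones whose Zeckendorf form has leading term $F_{2n-1}$ are the sums $F_aF_{2n-a} + F_bF_{2n-b}$ with $2 \le a \le b \le n$. Indeed, every other element of $2\cA(G_n)$ is one having $0$ or $F_{2n-1}$ as one of its two summands, and all of these are listed explicitly in Lemma~\ref{Lem:AddF2n-3} and Lemma~\ref{Lem:Remaining2A}; their leading terms are $F_{2n-6}$, $F_{2n-5}$, $F_{2n-3}$, or (in the case of $0$) nothing at all, and in particular never $F_{2n-1}$. So our $D$ must be of the form $F_aF_{2n-a} + F_bF_{2n-b}$ with $2 \le a \le b \le n$.

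Next I would invoke the case analysis carried out in the proof of Lemma~\ref{Lem:MostOf2A}: among those sums, having $F_{2n-4}$ (rather than $F_{2n-3}$) as the second term of the Zeckendorf form forces $a = 2$. Since $F_2 = 1$, this means $D = F_{2n-2} + F_bF_{2n-b}$ for some $2 \le b \le n$. The computation is then one line: working modulo $F_{2n}$,
\[
D + F_{2n-1} = F_{2n-2} + F_{2n-1} + F_bF_{2n-b} = F_{2n} + F_bF_{2n-b} \equiv F_bF_{2n-b} \pmod{F_{2n}},
\]
and $F_bF_{2n-b} \in \cA(G_n)$ by Corollary~\ref{Cor:SetA}, since $0 \le b \le n$.

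There is no genuine obstacle here; the only thing that takes any care is the reverse bookkeeping in the first two steps — reading off from the earlier lemmas precisely which two‑element representatives produce an element of $2\cA(G_n)$ with leading terms $F_{2n-1} + F_{2n-4}$, and in particular that one of the two summands must be $F_2F_{2n-2}$. Once that identification is in hand, the Fibonacci recurrence $F_{2n} = F_{2n-1} + F_{2n-2}$ finishes the argument immediately.
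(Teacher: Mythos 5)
Your proof is correct and is essentially the paper's own argument: the paper likewise reads off from the proof of Lemma~\ref{Lem:MostOf2A} that $D \in F_{2n-2} + \cA(G_n)$ and then concludes via $F_{2n-1}+F_{2n-2}=F_{2n}\equiv 0 \pmod{F_{2n}}$; you have simply made the bookkeeping explicit. (One negligible slip: $F_{2n-1}+0=F_{2n-1}\in 2\cA(G_n)$ does have leading term $F_{2n-1}$, but its Zeckendorf form has no second term, so it is still excluded by the hypothesis and your argument is unaffected.)
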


\begin{proof}
    By the proof of Lemma~\ref{Lem:MostOf2A},  $D \in F_{2n-2} + \cA(G_n)$, hence $D + F_{2n-1} \in \cA(G_n)$.
\end{proof}

\begin{lemma}
\label{Lem:FirstCase}
    If $D \in 2\cA(G_n)$ has leading terms $F_{2n-1} + F_{2n-4}$ and $D' \in \cA(G_n)$ has leading term $F_{2n-2}$, then either:
    \begin{enumerate}
        \item $D+D' = F_{2n-4}$,
        \item $D+D'$ has leading terms $F_{2n-4} + F_{2n-6}$, with next possible term $F_{2n-9}$ or $F_{2n-10}$,
        \item $D+D'$ has leading terms $F_{2n-3} + F_{2n-6}$, with next possible term $F_{2n-9}$ or $F_{2n-10}$,
        \item $D+D'$ has leading terms $F_{2n-3} + F_{2n-7}$, or
        \item $D+D'$ has leading terms $F_{2n-3} + F_{2n-8}$.
    \end{enumerate}
\end{lemma}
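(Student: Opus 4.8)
The plan is to mimic the case analysis in the proof of Lemma~\ref{Lem:MostOf2A}, now adding one more element of $\cA(G_n)$. By the preceding lemma we know $D \in F_{2n-2} + \cA(G_n)$, so write $D = F_{2n-2} + D_1$ with $D_1 \in \cA(G_n)$; then $D + D' = F_{2n-2} + D_1 + D'$ where both $D_1$ and $D'$ lie in $\cA(G_n)$. Since $D' $ has leading term $F_{2n-2}$, Corollary~\ref{Cor:A} tells us $D'$ is either $F_{2n-2}$ itself, or $2F_{2n-3} = F_{2n-2} + F_{2n-4}$ (the $k=3$ case), or has leading terms $F_{2n-2} + F_{2n-6}$ with any further term at most $F_{2n-9}$ (the case $k \geq 4$). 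Similarly $D_1$ is constrained: from case~(2) of Lemma~\ref{Lem:MostOf2A}, $D_1$ is either $F_{2n-2}$ (if $D = F_{2n-1}+F_{2n-4}$ exactly), or has leading terms $F_{2n-2} + F_{2n-6}$ with next term at most $F_{2n-9}$. So I would organize the proof as a small grid over the possibilities for the pair $(D_1, D')$.

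First I would handle the degenerate subcase $D_1 = D' = F_{2n-2}$: then $D + D' = 3F_{2n-2} = F_{2n} + F_{2n-2} + F_{2n-4} \equiv F_{2n-2} + F_{2n-4} \pmod{F_{2n}}$... wait, let me recompute: $3F_{2n-2} = 2F_{2n-2} + F_{2n-2} = (F_{2n-1}+F_{2n-3}+F_{2n-2}) $, and $F_{2n-1}+F_{2n-2} = F_{2n} \equiv 0$, leaving $F_{2n-3}$; so one lands in conclusion~(4) or a sub-case with leading term $F_{2n-3}$. The point is that the modular reduction $F_{2n-1}+F_{2n-2}\equiv 0$ is what produces the drop from leading term $F_{2n-1}$ down to $F_{2n-3}$ or $F_{2n-4}$ in the conclusions. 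So the second step is: in the generic subcases, $F_{2n-2} + D_1 + D'$ has the term $2F_{2n-2}$ appearing (since both $D_1,D'$ contribute an $F_{2n-2}$ unless one of them is forced smaller), and $2F_{2n-2} = F_{2n-1} + F_{2n-3}$, so the running total is $F_{2n-2} + F_{2n-1} + F_{2n-3} + (\text{tails}) = F_{2n} + F_{2n-3} + (\text{tails}) \equiv F_{2n-3} + (\text{tails})$. I would then carefully track the tails — each of $D_1$ and $D'$ contributes either nothing more, or $F_{2n-6} + (\le F_{2n-9})$ — and do the Zeckendorf addition of at most $F_{2n-6} + F_{2n-6} = F_{2n-5} + F_{2n-8}$ plus the small terms, using the stated principle that terms below $F_k$ sum to something below $F_{k+2}$.

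The enumeration should be: (a) $D_1 = F_{2n-2}$, $D' = F_{2n-2}$; (b) $D_1 = F_{2n-2}$, $D'$ has tail starting $F_{2n-6}$; (c) $D_1 = F_{2n-2}$, $D' = 2F_{2n-3}$; (d) $D_1$ has tail starting $F_{2n-6}$, $D' = F_{2n-2}$; (e) $D_1$ has tail $F_{2n-6}$, $D'$ has tail $F_{2n-6}$; (f) $D_1$ has tail $F_{2n-6}$, $D' = 2F_{2n-3}$. In subcases where only one of the two extra summands carries a genuine $F_{2n-2}$ (namely when $D' = 2F_{2n-3}$, whose leading term is $F_{2n-2}$ but which equals $F_{2n-2}+F_{2n-4}$), the arithmetic changes slightly because the $F_{2n-4}$ interacts; I expect those to be the ones landing in conclusions~(1), (4), (5). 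Matching each computed Zeckendorf form against the five listed outcomes is the bulk of the work.

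The main obstacle will be bookkeeping: keeping the "next possible term is at most $F_{2n-9}$" bounds propagating correctly through the Zeckendorf carries, especially in subcase~(e) where two tails each of shape $F_{2n-6} + (\le F_{2n-9})$ are added and one must verify the sum is $F_{2n-5} + (\le F_{2n-8})$ or $F_{2n-6}+F_{2n-7}$-type and that this combines with the $F_{2n-3}$ above it without a carry reaching $F_{2n-3}$ — i.e., that $F_{2n-3}$ genuinely survives as a leading term rather than being absorbed. I would want to double-check the claimed outcome lists by also reconciling with Lemma~\ref{Lem:AddF2n-3}, since $D + F_{2n-1} \in \cA(G_n)$ means $D + D' = (D+F_{2n-1}) + (D' - F_{2n-1})$ with $D+F_{2n-1} \in \cA(G_n)$ and $D' - F_{2n-1} = D' + F_{2n-2}$, giving an independent route to the same Zeckendorf form as a consistency check.
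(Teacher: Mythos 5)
Your plan is structurally the same as the paper's proof, which simply enumerates the possible Zeckendorf leading-term patterns of $D$ (via Lemma~\ref{Lem:MostOf2A}) and of $D'$ (via Corollary~\ref{Cor:A}) and checks each combination by Zeckendorf arithmetic; your reformulation $D = F_{2n-2}+D_1$ with $D_1 \in \cA(G_n)$ is just a repackaging of that same grid, and the consistency check via Lemma~\ref{Lem:AddF2n-3} is a harmless extra.

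The problem is that the arithmetic engine driving your case analysis is wrong. The doubling identity is $2F_k = F_k + (F_{k-1}+F_{k-2}) = F_{k+1}+F_{k-2}$, but in both computations you actually carry out you use $2F_k = F_{k+1}+F_{k-1}$. Concretely: $2F_{2n-3} = F_{2n-2}+F_{2n-5}$, not $F_{2n-2}+F_{2n-4}$ (you have confused $2F_{2n-3}$ with its negative, $-2F_{2n-3} \equiv F_{2n-2}+F_{2n-4}$, from the table in Section~\ref{Sec:StripStrongerBound}; the paper's proof correctly lists $F_{2n-5}$ as the possible second term of $D'$), and $2F_{2n-2} = F_{2n-1}+F_{2n-4}$, not $F_{2n-1}+F_{2n-3}$. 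Hence $3F_{2n-2} \equiv F_{2n-4} \pmod{F_{2n}}$, so your degenerate subcase $D_1 = D' = F_{2n-2}$ is exactly conclusion (1), not a leading-term-$F_{2n-3}$ case, and your ``generic running total'' is $F_{2n-4}+(\text{tails})$, not $F_{2n-3}+(\text{tails})$. As written, your computation can never produce the leading term $F_{2n-4}$ of conclusions (1)--(2), and the other cases would come out with incorrect second terms, so the matching against the five listed outcomes --- which you rightly identify as the bulk of the work --- would fail across the board. The fix is purely local: after correcting the identity, your grid does recover the statement (two tails: $F_{2n-4}+2F_{2n-6}+\cdots = F_{2n-3}+F_{2n-8}+\cdots$, conclusion (5); one tail: conclusion (2); $D'=2F_{2n-3}$ with a tail on $D_1$: $F_{2n-4}+F_{2n-5}+F_{2n-6}+\cdots = F_{2n-3}+F_{2n-6}+\cdots$, conclusion (3)).
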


\begin{proof}
By Lemma~\ref{Lem:MostOf2A}, either $D = F_{2n-1} + F_{2n-4}$, or the next leading term is $F_{2n-6}$.  By Corollary~\ref{Cor:A}, either $D' = F_{2n-2}$, or the next leading term is $F_{2n-5}$, or $F_{2n-6}$.  The proof then follows by case analysis, for each possible combination of leading terms of $D$ and $D'$.
\end{proof}

The remaining lemmas in this section follow from a straightforward case analysis, similar to that of Lemma~\ref{Lem:FirstCase}.  We omit the details.

\begin{lemma}
    If $D \in 2\cA(G_n)$ has leading terms $F_{2n-1} + F_{2n-3}$, then either
    \begin{enumerate}
        \item $D + F_{2n-1} = F_{2n-3} + F_{2n-4} + F_{2n-7}$,
        \item $D + F_{2n-1}$ has leading terms $F_{2n-2} + F_{2n-4}$, with next possible term smaller than $F_{2n-9}$,
        \item $D + F_{2n-1}$ has leading terms $F_{2n-2} + F_{2n-5} + F_{2n-7}$, or
        \item $D + F_{2n-1}$ has leading terms $F_{2n-2} + F_{2n-5} + F_{2n-8}$, with next possible term $F_{2n-10}$.
    \end{enumerate}
\end{lemma}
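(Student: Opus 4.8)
The plan is to reduce everything to one arithmetic identity and then run the same kind of Zeckendorf case analysis carried out explicitly in Lemma~\ref{Lem:FirstCase} (this lemma is one of the inputs to the description of $2\cA(G_n) + F_{2n-1}$, hence of $3\cA(G_n)$).

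First I would record the two facts that do all the work. Modulo $F_{2n}$ one has $2F_{2n-1} \equiv F_{2n-3}$ (the first bullet of Lemma~\ref{Lem:Remaining2A}), and $2F_{2n-3} = F_{2n-2} + F_{2n-5}$ (a one-line Zeckendorf computation: $2F_{2n-3} = F_{2n-3} + F_{2n-4} + F_{2n-5}$ and $F_{2n-3} + F_{2n-4} = F_{2n-2}$). Writing the Zeckendorf form of $D$ as $D = F_{2n-1} + F_{2n-3} + T$, where $T$ collects the terms below $F_{2n-3}$, it follows that
\[
D + F_{2n-1} \equiv 2F_{2n-1} + F_{2n-3} + T \equiv F_{2n-3} + F_{2n-3} + T = F_{2n-2} + F_{2n-5} + T \pmod{F_{2n}} .
\]
So the statement reduces to the claim that, for the handful of possible shapes of $T$, normalizing $F_{2n-2} + F_{2n-5} + T$ yields exactly one of the four listed forms.

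Second, I would pin down the possible shapes of $T$ using Corollary~\ref{Cor:2A}(5) together with Lemma~\ref{Lem:Sp case 1}. These say that the leading term of $T$ — equivalently, the third leading term of $D$ — is one of $F_{2n-5}, F_{2n-6}, F_{2n-7}, F_{2n-8}$ (or $T=0$), and they attach to each case a bound on how far down the rest of $T$ reaches: if the leading term is $F_{2n-5}$ then $T = F_{2n-5}$ exactly; if it is $F_{2n-6}$ then the remaining terms lie below $F_{2n-9}$; if it is $F_{2n-8}$ then the remaining terms lie below $F_{2n-10}$. Then I would run the five cases. When the leading term of $T$ is $F_{2n-6}$, the carry $F_{2n-5} + F_{2n-6} = F_{2n-4}$ fires and we land on item (2), the tail bound passing through verbatim. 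When it is $F_{2n-5}$, instead $2F_{2n-5} = F_{2n-4} + F_{2n-7}$ fires and we land on item (1). When it is $F_{2n-7}$, $F_{2n-8}$, or $T=0$, the terms $F_{2n-5}$ and the leading term of $T$ are non-consecutive, so no carry crosses $F_{2n-5}$ and the displayed sum is already in Zeckendorf form, giving items (3) and (4) respectively (with $T=0$ a truncation of either), again with the tail bounds passing straight through.

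The only point that needs care — and it is pure bookkeeping — is verifying that carries cannot propagate above $F_{2n-4}$: since the portion of $T$ below its leading term sits below $F_{2n-9}$ (or $F_{2n-10}$), it cannot reach up to $F_{2n-5}$, so after the single carry $F_{2n-5}+F_{2n-6}$ or $2F_{2n-5}$ the remaining sum is inert and the tail condition on $D$ translates directly into the ``next possible term'' clause of the corresponding option. There is no conceptual obstacle; this is exactly the routine Zeckendorf-arithmetic check the paper performs in full in Lemma~\ref{Lem:FirstCase} and suppresses here.
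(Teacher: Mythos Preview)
Your approach is correct and is precisely the case analysis the paper points to --- the paper omits this proof entirely, saying only that it follows from ``a straightforward case analysis, similar to that of Lemma~\ref{Lem:FirstCase}''. Your reduction $D+F_{2n-1}\equiv F_{2n-2}+F_{2n-5}+T\pmod{F_{2n}}$, via $2F_{2n-1}\equiv F_{2n-3}$ and $2F_{2n-3}=F_{2n-2}+F_{2n-5}$, is a tidy way to organize the five cases on the shape of $T$, and the carry bookkeeping is just as you describe.

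As a side remark: carrying your computation through literally actually exposes two small slips in the lemma as stated. In the $T=F_{2n-5}$ case one gets $F_{2n-2}+F_{2n-4}+F_{2n-7}$, not $F_{2n-3}+F_{2n-4}+F_{2n-7}$ (the latter is not even in Zeckendorf form); and in the $T$-leading-$F_{2n-6}$ case, Corollary~\ref{Cor:2A}(5c) bounds the next term only by $\le F_{2n-9}$ (realized e.g.\ by $a=3$, $b=5$), not strictly below it. Both corrections are consistent with Corollary~\ref{Cor:3A}, so these are typographical slips in the statement, not defects in your argument.
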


\begin{lemma}
    If $D \in 2\cA(G_n)$ has leading terms $F_{2n-1} + F_{2n-3}$, then either:
    \begin{enumerate}
        \item $D + F_{2n-2} = F_{2n-3} + F_{2n-5}$,
        \item $D + F_{2n-2}$ has leading terms $F_{2n-3} + F_{2n-6}$, with next possible term $F_{2n-9}$ or $F_{2n-10}$,
        \item $D + F_{2n-2}$ has leading terms $F_{2n-3} + F_{2n-7}$, or
        \item $D + F_{2n-2}$ has leading terms $F_{2n-3} + F_{2n-8}$.
    \end{enumerate}
\end{lemma}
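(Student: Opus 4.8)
The plan is to pivot on the congruence $F_{2n-1}+F_{2n-2}=F_{2n}\equiv 0 \pmod{F_{2n}}$, which for any element whose Zeckendorf form begins with $F_{2n-1}$ gives
\[
D+F_{2n-2}\equiv D-F_{2n-1}\pmod{F_{2n}}.
\]
Since $D$ is assumed to have leading terms $F_{2n-1}+F_{2n-3}$, removing the top summand leaves an integer already in Zeckendorf form whose leading term is $F_{2n-3}$ and whose remaining summands are precisely the terms of $D$ below $F_{2n-3}$; no carrying occurs, because $F_{2n-3}$ is non-consecutive with whatever term of $D$ comes next. So the lemma reduces entirely to identifying the third (and, where needed, fourth) leading term of $D$.

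That information is already assembled in Section~\ref{Sec:2A}. By Corollary~\ref{Cor:2A}(5), together with Lemma~\ref{Lem:Sp case 1} and the term-by-term bookkeeping in the proof of Lemma~\ref{Lem:MostOf2A}, the term of $D$ immediately below $F_{2n-3}$, when one exists, is: $F_{2n-5}$, and then $D=F_{2n-1}+F_{2n-3}+F_{2n-5}$ exactly; or $F_{2n-6}$, with the term below it equal to $F_{2n-9}$ or $F_{2n-10}$; or $F_{2n-7}$; or $F_{2n-8}$, with the term below it equal to $F_{2n-10}$. (If no term of $D$ lies below $F_{2n-3}$, then $D=F_{2n-1}+F_{2n-3}$, the $\{a,b\}=\{2,3\}$ case of Lemma~\ref{Lem:MostOf2A}; here $D+F_{2n-2}=F_{2n-3}$, which is folded into alternative~(1).) Substituting these forms of $D$ into $D+F_{2n-2}=D-F_{2n-1}$ yields, in order: $D+F_{2n-2}=F_{2n-3}+F_{2n-5}$, i.e.\ alternative~(1); leading terms $F_{2n-3}+F_{2n-6}$ with next term $F_{2n-9}$ or $F_{2n-10}$, i.e.\ alternative~(2); leading terms $F_{2n-3}+F_{2n-7}$, i.e.\ alternative~(3); and leading terms $F_{2n-3}+F_{2n-8}$, i.e.\ alternative~(4).

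The only step that is not a single-line carry computation is pinning down the precise side conditions in alternatives~(2) and~(4) — that the term below $F_{2n-6}$ is $F_{2n-9}$ or $F_{2n-10}$, and that below $F_{2n-8}$ is $F_{2n-10}$, rather than merely an upper bound. This is obtained by carrying one further term through the Zeckendorf arithmetic in the proof of Lemma~\ref{Lem:MostOf2A}, in exactly the manner of the proof of Lemma~\ref{Lem:Sp case 1}. Since the whole argument runs parallel to that of Lemma~\ref{Lem:FirstCase} and the two lemmas immediately preceding this one, I would record only the reduction $D+F_{2n-2}\equiv D-F_{2n-1}$ and the list of cases, and omit the mechanical details, as the authors do.
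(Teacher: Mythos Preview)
Your approach is the same as the paper's (which omits details and refers back to the case analysis of Lemma~\ref{Lem:FirstCase}): read off the possible Zeckendorf continuations of $D$ beyond $F_{2n-1}+F_{2n-3}$ from Lemma~\ref{Lem:MostOf2A} and Corollary~\ref{Cor:2A}, then compute $D+F_{2n-2}$ in each case. Your reduction $D+F_{2n-2}\equiv D-F_{2n-1}\pmod{F_{2n}}$ is a clean shortcut that makes the Zeckendorf arithmetic trivial --- just strike the leading $F_{2n-1}$.

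One small correction: when $D=F_{2n-1}+F_{2n-3}$ (the $a=2$, $b=3$ case of Lemma~\ref{Lem:MostOf2A}), you get $D+F_{2n-2}=F_{2n-3}$, and this is \emph{not} alternative~(1), which is the exact equality $F_{2n-3}+F_{2n-5}$. So your phrase ``folded into alternative~(1)'' is not accurate; strictly speaking, the lemma as stated omits this boundary case. It does no harm downstream, since $F_{2n-3}=2F_{2n-1}\in 2\cA(G_n)$ and Corollary~\ref{Cor:3A} is phrased for $3\cA(G_n)\smallsetminus 2\cA(G_n)$, but you should flag it as a missing case in the statement rather than claim it is covered.
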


\begin{lemma}
    If $D \in 2\cA(G_n)$ has leading terms $F_{2n-1} + F_{2n-3}$, then either:
    \begin{enumerate}
        \item $D + F_{2n-2} + F_{2n-5} = F_{2n-2} + F_{2n-7}$,
        \item $D + F_{2n-2} + F_{2n-5}$ has leading terms $F_{2n-2} + F_{2n-10}$,
        \item $D + F_{2n-2} + F_{2n-5}$ has leading terms $F_{2n-3} + F_{2n-5} + F_{2n-7}$, or
        \item $D + F_{2n-2} + F_{2n-5}$ has leading terms $F_{2n-3} + F_{2n-5} + F_{2n-8}$.
    \end{enumerate}
\end{lemma}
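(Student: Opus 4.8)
The plan is to run the same Zeckendorf-arithmetic case analysis used in Lemma~\ref{Lem:FirstCase} and the three lemmas immediately preceding this one. The starting point is the trivial rewriting $D + F_{2n-2} + F_{2n-5} = \bigl(D + F_{2n-2}\bigr) + F_{2n-5}$, which lets me feed in the classification provided by the lemma just above: if $D \in 2\cA(G_n)$ has leading terms $F_{2n-1}+F_{2n-3}$, then $D+F_{2n-2}$ is in exactly one of its four listed forms. The proof then consists of adding the single Fibonacci number $F_{2n-5}$ to each of those four Zeckendorf expressions and simplifying. (Equivalently, one can start from the Zeckendorf form of $D$ itself, which by Corollary~\ref{Cor:2A}(5) and Lemma~\ref{Lem:Sp case 1} is one of $F_{2n-1}+F_{2n-3}+F_{2n-5}$, or $F_{2n-1}+F_{2n-3}+F_{2n-6}+\cdots$ with tail at most $F_{2n-9}$, or $F_{2n-1}+F_{2n-3}+F_{2n-8}+\cdots$ with tail at most $F_{2n-10}$, and cancel the leading $F_{2n-1}$ using $F_{2n-1}+F_{2n-2}\equiv 0 \pmod{F_{2n}}$; the two routes are the same.)

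First I would dispatch the two cases in which adding $F_{2n-5}$ triggers no carry. If $D+F_{2n-2}$ has leading terms $F_{2n-3}+F_{2n-7}$, then since the indices $2n-3$, $2n-5$, $2n-7$ are pairwise non-consecutive, $D+F_{2n-2}+F_{2n-5}$ equals $F_{2n-3}+F_{2n-5}+F_{2n-7}+\cdots$ already in Zeckendorf form, which is conclusion (3); the case with leading terms $F_{2n-3}+F_{2n-8}$ is identical and yields conclusion (4). Next I would treat the two cases in which $F_{2n-5}$ does carry. If $D+F_{2n-2}=F_{2n-3}+F_{2n-5}$, then adding $F_{2n-5}$ gives $F_{2n-3}+2F_{2n-5}=F_{2n-3}+F_{2n-4}+F_{2n-7}=F_{2n-2}+F_{2n-7}$, which is conclusion (1). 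If $D+F_{2n-2}$ has leading terms $F_{2n-3}+F_{2n-6}$, then $F_{2n-5}+F_{2n-6}=F_{2n-4}$ and then $F_{2n-3}+F_{2n-4}=F_{2n-2}$, so the top of the sum collapses to a single $F_{2n-2}$ and the next leading term is inherited from the tail of $D+F_{2n-2}$; this is the source of conclusions (1) and (2), according to whether $F_{2n-7}$ or $F_{2n-10}$ is the surviving term.

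The hard part is pure bookkeeping, and it is concentrated in that last case: the second leading term of the answer depends on Zeckendorf digits of $D$ that lie several places below the top, so one has to lean on the precise tail bounds recorded in Corollary~\ref{Cor:A}, Corollary~\ref{Cor:2A}, and Lemma~\ref{Lem:Sp case 1} to identify that term correctly and to confirm that it triggers no further carry. There is no new idea here --- it is exactly the enumeration carried out explicitly in Lemma~\ref{Lem:FirstCase} --- which is why the details may be, and are, omitted.
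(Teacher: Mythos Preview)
Your approach --- bootstrapping from the previous lemma's classification of $D+F_{2n-2}$ and then adding the single term $F_{2n-5}$ --- is exactly the straightforward Zeckendorf case analysis the paper has in mind and omits. One bookkeeping slip to flag: in the case where $D+F_{2n-2}$ has leading terms $F_{2n-3}+F_{2n-6}$, the previous lemma says the next possible term is $F_{2n-9}$ or $F_{2n-10}$ (or absent), not $F_{2n-7}$, so after the collapse to $F_{2n-2}$ the surviving tail is one of $\{\text{nothing},\,F_{2n-9},\,F_{2n-10}\}$ rather than $F_{2n-7}$ or $F_{2n-10}$; conclusion~(1) comes only from the case $D+F_{2n-2}=F_{2n-3}+F_{2n-5}$ that you already handled. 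The lemma as printed in the paper is itself a little loose on this sub-case, but the resulting values $F_{2n-2}$, $F_{2n-2}+F_{2n-9}$, $F_{2n-2}+F_{2n-10}$ are precisely what feed into Corollary~\ref{Cor:3A}(3), so nothing downstream is affected.
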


\begin{lemma}
    If $D \in 2\cA(G_n)$ has leading terms $F_{2n-1} + F_{2n-3}$ and $D' \in \cA(G_n)$ has leading terms $F_{2n-2} + F_{2n-6}$, then either:
    \begin{enumerate}
        \item $D+D' = F_{2n-2} + F_{2n-9}$,
        \item $D+D'$ has leading terms $F_{2n-2} + F_{2n-10}$,
        \item $D+D'$ has leading terms $F_{2n-3} + F_{2n-5}$,
        \item $D+D'$ has leading terms $F_{2n-3} + F_{2n-5} + F_{2n-8}$, or
        \item $D+D'$ has leading terms $F_{2n-3} + F_{2n-6} + F_{2n-8}$.
    \end{enumerate}
\end{lemma}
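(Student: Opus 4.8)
The plan is to compute $D+D'$ directly in Zeckendorf form, following the same template as the proof of Lemma~\ref{Lem:FirstCase}. The first step is to record the possible leading behavior of the two summands. By Corollary~\ref{Cor:2A}, Lemma~\ref{Lem:MostOf2A}, and Lemma~\ref{Lem:Sp case 1}, an element $D\in 2\cA(G_n)$ whose two leading Zeckendorf terms are $F_{2n-1}+F_{2n-3}$ has the form $D=F_{2n-1}+F_{2n-3}+\delta_D$, where $\delta_D$ is either $0$, or $F_{2n-5}$ with nothing below it, or $F_{2n-6}$ followed by terms no larger than $F_{2n-9}$, or $F_{2n-8}$ followed by terms no larger than $F_{2n-10}$. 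Likewise, by Corollary~\ref{Cor:A}, an element $D'\in\cA(G_n)$ whose leading terms are $F_{2n-2}+F_{2n-6}$ has the form $D'=F_{2n-2}+F_{2n-6}+\delta_{D'}$, where $\delta_{D'}$ is either $0$, or $F_{2n-9}$ with nothing below it, or $F_{2n-10}$ followed by terms no larger than $F_{2n-13}$.

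The engine of the computation is the congruence $F_{2n-1}+F_{2n-2}=F_{2n}\equiv 0\pmod{F_{2n}}$, which cancels the two top terms and leaves
\[
D+D'\equiv F_{2n-3}+F_{2n-6}+\delta_D+\delta_{D'}\pmod{F_{2n}}.
\]
I would branch on $\delta_D$, and in each branch push the bounded tails $\delta_D$ and $\delta_{D'}$ through the Zeckendorf recombination rules $F_m+F_{m-1}=F_{m+1}$ and $2F_m=F_{m+1}+F_{m-2}$, reducing to canonical form. When $\delta_D=F_{2n-5}$, the block $F_{2n-3}+F_{2n-6}+F_{2n-5}$ telescopes to $F_{2n-2}$, and the residual $\delta_{D'}$ produces outcomes~(1) and~(2). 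When $\delta_D$ begins with $F_{2n-6}$, the repeated $F_{2n-6}$ becomes $F_{2n-5}+F_{2n-8}$, producing the leading pair $F_{2n-3}+F_{2n-5}$ and landing in outcomes~(3) and~(4). When $\delta_D$ begins with $F_{2n-8}$, the triple $F_{2n-3}, F_{2n-6}, F_{2n-8}$ survives intact, giving outcome~(5), unless an $F_{2n-9}$ coming from $\delta_{D'}$ collides with $F_{2n-8}$, in which case a carry runs through $F_{2n-7}$ into $F_{2n-5}$ and the result falls into outcome~(3).

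The part I expect to be the real work is bookkeeping the carries. Several of the admissible tails contain a term $F_{2n-9}$ or $F_{2n-10}$, so $\delta_D+\delta_{D'}$ can contain $2F_{2n-9}=F_{2n-8}+F_{2n-11}$ or $2F_{2n-10}=F_{2n-9}+F_{2n-12}$, and since $F_{2n-8}$ is consecutive to $F_{2n-9}$ and $F_{2n-6}$ is consecutive to $F_{2n-7}$, such a carry can cascade two or three positions upward. One must check, in each of the finitely many combinations of a leading pattern for $D$ with a leading pattern for $D'$, that this cascade never climbs above $F_{2n-5}$, so that the asserted leading pattern survives; the uniform bounds ``no larger than $F_{2n-9}$'' and ``no larger than $F_{2n-13}$'' on the tails are precisely what guarantees this. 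One also dispatches the few degenerate combinations in which a summand consists of only its two leading terms. Organizing the verification as a table indexed by these patterns, as in Lemma~\ref{Lem:FirstCase}, each entry reduces to a short Fibonacci identity, which is why the details are omitted in the text.
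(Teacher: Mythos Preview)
Your approach is exactly the paper's: cancel $F_{2n-1}+F_{2n-2}\equiv 0\pmod{F_{2n}}$ and then run a Zeckendorf case analysis on the tails $\delta_D$ and $\delta_{D'}$, in the style of Lemma~\ref{Lem:FirstCase}. The paper says only ``straightforward case analysis \ldots\ details omitted,'' so your outline is already more explicit than the original.

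There is, however, a genuine gap in your enumeration of $\delta_D$. You claim the third Zeckendorf term of $D$ after $F_{2n-1}+F_{2n-3}$ must be absent, $F_{2n-5}$, $F_{2n-6}$, or $F_{2n-8}$. This misses $F_{2n-7}$: for example $F_4F_{2n-4}+F_5F_{2n-5}=(F_{2n-2}+F_{2n-6})+(F_{2n-2}+F_{2n-6}+F_{2n-9})=F_{2n-1}+F_{2n-3}+F_{2n-7}$, and more generally any pair with $a=4,b=5$ or $a=5$ lands here (the proof of Lemma~\ref{Lem:Sp case 1} even names this possibility explicitly). The sources you cite do not exclude it: Corollary~\ref{Cor:2A} and Lemma~\ref{Lem:Sp case 1} only describe what follows \emph{if} the third term is $F_{2n-5}$, $F_{2n-6}$, or $F_{2n-8}$; they say nothing prohibiting $F_{2n-7}$. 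The missing branch is easy to dispatch---$F_{2n-6}+F_{2n-7}=F_{2n-5}$ feeds it into outcome~(3)---but as written your case split is incomplete. Separately, you list $\delta_D=0$ among the possibilities but then never branch on it; that subcase gives $D+D'=F_{2n-3}+F_{2n-6}+\delta_{D'}$ and should also be traced.
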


\begin{lemma}
    If $D' \in \cA(G_n)$, then either:
    \begin{enumerate}
        \item $F_{2n-3} + D' = F_{2n-1} + F_{2n-5}$, or
        \item $F_{2n-3} + D'$ has leading terms $F_{2n-1} + F_{2n-6}$, with next possible term $F_{2n-9}$ or $F_{2n-10}$.
    \end{enumerate}
\end{lemma}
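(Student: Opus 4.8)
The plan is to argue exactly as in the proof of Lemma~\ref{Lem:FirstCase}: read the leading terms of $D'$ off of Corollary~\ref{Cor:A}, then carry out the short piece of Zeckendorf addition needed to absorb $F_{2n-3}$. First I would dispatch the degenerate elements of $\cA(G_n)$: if $D' \in \{0, F_{2n-1}, F_{2n-2}\}$, then $F_{2n-3} + D'$ equals $F_{2n-3}$, $F_{2n-1} + F_{2n-3}$, or $F_{2n-1}$ respectively, and each of these already lies in $2\cA(G_n)$ and is described by Corollary~\ref{Cor:2A}, so it contributes no new leading-term possibility and may be set aside (equivalently, one reads the hypothesis as $D' = F_k F_{2n-k}$ with $k \geq 3$). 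For such a $D'$, Corollary~\ref{Cor:A} tells us that the Zeckendorf form of $D'$ has leading term $F_{2n-2}$; more precisely, $D' = F_{2n-2} + F_{2n-5}$ exactly when $k = 3$, while for $k \geq 4$ the first two terms are $F_{2n-2} + F_{2n-6}$ and the third term, when present, is $F_{2n-9}$ if $k = 5$ and at most $F_{2n-10}$ if $k \geq 6$.

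Now add $F_{2n-3}$. Since $F_{2n-3}$ and $F_{2n-2}$ are consecutive Fibonacci numbers, $F_{2n-3} + F_{2n-2} = F_{2n-1}$, and because every remaining term of $D'$ has index at most $2n-5$, this merge triggers no further carries; the Zeckendorf form of $F_{2n-3} + D'$ is thus obtained from that of $D'$ by replacing the leading $F_{2n-2}$ with $F_{2n-1}$. When $k = 3$ this gives $F_{2n-3} + D' = F_{2n-1} + F_{2n-5}$, which is conclusion~(1); when $k \geq 4$ it gives $F_{2n-3} + D' = F_{2n-1} + F_{2n-6} + (\text{tail})$, where the tail is exactly the part of $D'$ below $F_{2n-6}$, hence empty or with leading term $F_{2n-9}$ or at most $F_{2n-10}$, which is conclusion~(2).

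There is no genuine obstacle here; the only points needing care are that the single carry $F_{2n-3} + F_{2n-2} = F_{2n-1}$ does not cascade into the lower terms of $D'$ (immediate, since those sit at least four indices below $F_{2n-1}$), and that the stated bound of $F_{2n-9}$ or $F_{2n-10}$ on the next term is precisely the bound on the corresponding tail of $D'$ furnished by Corollary~\ref{Cor:A}. All the substance is the bookkeeping of the third index in the Zeckendorf expansion of $F_k F_{2n-k}$, which Corollary~\ref{Cor:A} hands us directly.
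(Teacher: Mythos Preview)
Your proof is correct and follows exactly the approach the paper indicates (straightforward case analysis on the Zeckendorf form of $D'$ via Corollary~\ref{Cor:A}, as in Lemma~\ref{Lem:FirstCase}); the paper omits the details entirely. Your explicit treatment of the degenerate elements $D' \in \{0, F_{2n-1}, F_{2n-2}\}$---noting that $F_{2n-3}+D'$ then lands in $2\cA(G_n)$ and so is already accounted for by Corollary~\ref{Cor:2A}---is a welcome clarification of a point the paper leaves implicit.
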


\begin{lemma}
    If $D' \in \cA(G_n)$, then either:
    \begin{enumerate}
        \item $F_{2n-5} + D' = F_{2n-2} + F_{2n-5}$, or 
        \item $F_{2n-5} + D'$ has leading terms $F_{2n-2} + F_{2n-4}$, with next possible term $F_{2n-7}$, $F_{2n-9}$ or $F_{2n-10}$.
    \end{enumerate}
\end{lemma}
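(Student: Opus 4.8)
The plan is to run a case analysis on the Zeckendorf form of $D'$, in the same spirit as the proofs of Lemma~\ref{Lem:MostOf2A} and Lemma~\ref{Lem:AddF2n-3}. By Corollary~\ref{Cor:A}, every element $D' \in \cA(G_n)$ is $0$, $F_{2n-1}$, or of the form $F_k F_{2n-k}$ with $2 \le k \le n$, and in the last case its Zeckendorf form is completely explicit, with leading term $F_{2n-2}$. These are the elements for which one of the two stated conclusions can hold, so the substance of the argument concerns $D'$ with leading term $F_{2n-2}$; the two remaining elements $D' = 0$ and $D' = F_{2n-1}$ give $F_{2n-5}$ and $F_{2n-1} + F_{2n-5}$, which are degenerate and are either excluded by the intended hypothesis (cf.\ the restriction on $D'$ in Lemma~\ref{Lem:FirstCase}) or recorded separately.

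For the generic case $D' = F_k F_{2n-k}$ with $k \ge 4$, Corollary~\ref{Cor:A} gives that the two leading terms of $D'$ are $F_{2n-2} + F_{2n-6}$, that the next term is $F_{2n-9}$ when $k = 5$ and at most $F_{2n-10}$ when $k \ge 6$, and that there is no next term when $k = 4$. Adding $F_{2n-5}$ and carrying via $F_{2n-6} + F_{2n-5} = F_{2n-4}$ collapses the top of the Zeckendorf form to $F_{2n-2} + F_{2n-4}$; since $F_{2n-4}$ is non-consecutive with $F_{2n-2}$, no carry propagates further, and the surviving lower terms show that the next term is absent when $k = 4$, equals $F_{2n-9}$ when $k = 5$, and is $F_{2n-10}$ when $k \ge 6$ --- precisely conclusion (2). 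The two remaining small values $k = 2, 3$ are done by direct computation: $D' = F_2 F_{2n-2} = F_{2n-2}$ gives $F_{2n-5} + F_{2n-2} = F_{2n-2} + F_{2n-5}$, which is conclusion (1); and $D' = F_3 F_{2n-3} = 2F_{2n-3} = F_{2n-2} + F_{2n-5}$ gives $F_{2n-2} + 2F_{2n-5} = F_{2n-2} + F_{2n-4} + F_{2n-7}$ after the carry $2F_{2n-5} = F_{2n-4} + F_{2n-7}$, which is conclusion (2) with next term $F_{2n-7}$. Together these exhaust the three listed possibilities $F_{2n-7}, F_{2n-9}, F_{2n-10}$ for the next term.

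I do not expect a deep obstacle: the whole content is careful carry bookkeeping in Zeckendorf arithmetic, and the single point that really needs verifying is that the carry triggered by $F_{2n-6} + F_{2n-5}$ does not cascade past $F_{2n-2}$ and that the third term of the result lands at the claimed index in each size regime of $k$. The only mild annoyance is pinning down the hypothesis on $D'$: as literally stated, the lemma must exclude the two elements $0$ and $F_{2n-1}$ of $\cA(G_n)$ (since $F_{2n-5} + 0$ and $F_{2n-5} + F_{2n-1}$ do not have leading term $F_{2n-2}$), so I would either add ``with leading term $F_{2n-2}$'' to the hypothesis, matching the sibling lemmas, or simply list those two outputs separately when assembling the description of $3\cA(G_n)$.
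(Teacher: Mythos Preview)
Your proof is correct and follows exactly the approach the paper intends: the paper omits the details for this lemma, merely pointing to the same case analysis on the Zeckendorf form of $D'$ via Corollary~\ref{Cor:A} that you carry out. Your observation about the exceptional elements $D' \in \{0, F_{2n-1}\}$ is also accurate; the paper silently absorbs these, since $F_{2n-5} + 0 = F_{2n-5} \in 2\cA(G_n)$ and $F_{2n-5} + F_{2n-1} = F_{2n-1} + F_{2n-5}$ appears as item (4)(c) in Corollary~\ref{Cor:3A}, so nothing is lost in the assembly of $3\cA(G_n)$.
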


\begin{lemma}
\label{Lem:LastCase}
    If $D \in 2\cA(G_n)$ has leading term $F_{2n-6}$ and $D' \in \cA(G_n)$, then either:
    \begin{enumerate}
        \item $D+D' = F_{2n-1} + F_{2n-6}$,
        \item $D+D' = F_{2n-2} + F_{2n-4}$, or
        \item $D+D'$ has leading terms $F_{2n-2} + F_{2n-5}$, with next possible term $F_{2n-7}$ or $(F_{2n-8}+F_{2n-10})$.
    \end{enumerate}
\end{lemma}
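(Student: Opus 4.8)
The plan is to prove this by the same finite case analysis on leading Zeckendorf terms used for Lemma~\ref{Lem:FirstCase}, followed by Zeckendorf arithmetic. First I would record the possible shapes of $D$. By Corollary~\ref{Cor:2A}(4) --- equivalently, by Lemma~\ref{Lem:AddF2n-3} --- an element $D\in 2\cA(G_n)$ with leading term $F_{2n-6}$ is either $F_{2n-6}$ itself, or has the form $F_{2n-6}+F_{2n-9}+(\text{terms}\le F_{2n-12})$, or $F_{2n-6}+F_{2n-10}+(\text{terms}\le F_{2n-12})$. Similarly, Corollary~\ref{Cor:A} says that $D'\in\cA(G_n)$ is $0$, is $F_{2n-1}$, or begins with $F_{2n-2}$ followed by nothing, by $F_{2n-5}$, or by $F_{2n-6}+(\text{terms}\le F_{2n-9})$. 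This leaves only boundedly many combinations of leading terms to process, one combination for each outcome in the statement.

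For each combination I would add $D$ and $D'$ using the two Zeckendorf carry rules $F_i+F_{i+1}=F_{i+2}$ and $2F_i=F_{i+1}+F_{i-2}$, together with the principle used throughout Section~\ref{Sec:2A}: if $x,y<F_k$ then $x+y<F_{k+2}$, so that truncating the tails of $D$ and $D'$ still lets me read off the top several Zeckendorf digits of $D+D'$. No reduction modulo $F_{2n}$ intervenes, since $D<F_{2n-5}$ and $D'<F_{2n-1}$ give $D+D'<F_{2n}$. The degenerate inputs are quick: $D'=0$ simply returns $D$, and $D'=F_{2n-1}$ produces leading terms $F_{2n-1}+F_{2n-6}$, which is outcome~(1); when $D'$ begins $F_{2n-2}+F_{2n-5}$ and $D$ begins $F_{2n-6}$, the consecutive pair $F_{2n-5}+F_{2n-6}=F_{2n-4}$ carries up to give outcome~(2); and when $D'$ begins $F_{2n-2}$ with no term colliding with $D$, the sum begins $F_{2n-2}+F_{2n-5}$ or $F_{2n-2}+F_{2n-6}$, which a short check places in~(2) or~(3).

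The one delicate point --- and the source of the ``$F_{2n-7}$ or $(F_{2n-8}+F_{2n-10})$'' dichotomy in outcome~(3) --- is the carry triggered when $D$ and $D'$ both contain $F_{2n-6}$. Then $2F_{2n-6}=F_{2n-5}+F_{2n-8}$, which raises the second digit to $F_{2n-5}$ (harmlessly, since the gap below $F_{2n-2}$ is large) and leaves behind an $F_{2n-8}$ that must be reconciled with the tail of $D$: if that tail contains $F_{2n-9}$, then $F_{2n-8}+F_{2n-9}=F_{2n-7}$ and the third digit becomes $F_{2n-7}$; otherwise the $F_{2n-8}$ survives and is followed by a term $\le F_{2n-10}$. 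The only genuine work is to keep enough Zeckendorf digits of $D$ and $D'$ in play to resolve every such cascade, and to check that no carry ever climbs above $F_{2n-2}$, which would alter the leading term; once that is done, every remaining combination is routine. I expect the main obstacle to be precisely this digit-bookkeeping --- the same fussiness behind Lemma~\ref{Lem:Sp case 1} and Lemma~\ref{Lem:FirstCase} --- rather than any conceptual difficulty.
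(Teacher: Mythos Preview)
Your strategy---enumerate the Zeckendorf leading terms of $D$ from Corollary~\ref{Cor:2A}(4) and of $D'$ from Corollary~\ref{Cor:A}, then run the carry rules---is exactly the case analysis the paper invokes and omits, and your treatment of the collision $2F_{2n-6}=F_{2n-5}+F_{2n-8}$ that produces the dichotomy in outcome~(3) is correct.

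There is, however, a concrete gap in your handling of the degenerate inputs. When $D'=F_{2n-2}$ with no further terms, the sum is $D+D'=F_{2n-2}+F_{2n-6}+(\text{tail of }D)$, which is neither outcome~(2) nor~(3); your claim that ``a short check places'' it there is false. Likewise $D'=0$ returns $D$ itself, with leading term $F_{2n-6}$, matching none of (1)--(3); and for $D'=F_{2n-1}$ with $D\neq F_{2n-6}$ the sum carries a tail beyond $F_{2n-1}+F_{2n-6}$, so the equality in~(1) fails literally. What actually happens is that for $D'\in\{0,F_{2n-2}\}$ one has $D+D'\in 2\cA(G_n)$ already (by Lemma~\ref{Lem:AddF2n-3}, $D=F_aF_{2n-a}-F_{2n-2}$ for some $a\ge4$, so $D+F_{2n-2}=F_aF_{2n-a}\in\cA(G_n)$), while for $D'=F_{2n-1}$ the sum has leading terms $F_{2n-1}+F_{2n-6}$ followed by $F_{2n-9}$ or $F_{2n-10}$, a shape already recorded in Corollary~\ref{Cor:3A}(4a) via an earlier lemma. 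The trichotomy (1)--(3) as literally stated does not cover these boundary cases; the lemma is tacitly describing only the new contributions to $3\cA(G_n)\smallsetminus 2\cA(G_n)$, and you should dispose of these inputs by noting that rather than forcing them into outcomes they do not fit.
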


We summarize the results of Lemmas~\ref{Lem:FirstCase}-\ref{Lem:LastCase} as follows:

\begin{corollary}
\label{Cor:3A}
Let $D \in 3\cA(G_n) \smallsetminus 2\cA(G_n)$.
\begin{enumerate}
\item  If $D$ has leading term $F_{2n-4}$, then either:
    \begin{enumerate}
    \item  $D = F_{2n-4}$, or
    \item  $D$ has leading terms $F_{2n-4} + F_{2n-6}$, followed by either $F_{2n-9}$ or $F_{2n-10}$.
    \end{enumerate}
\item  If $D$ has leading term $F_{2n-3}$, then either:
    \begin{enumerate}
    \item  $D$ has leading terms $F_{2n-3} + F_{2n-8}$,
    \item  $D$ has leading terms $F_{2n-3} + F_{2n-7}$,
    \item  $D$ has leading terms $F_{2n-3} + F_{2n-6}$,
    \item  $D = F_{2n-3} + F_{2n-5}$,
    \item  $D$ has leading terms $F_{2n-3} + F_{2n-5} + F_{2n-8}$, or
    \item  $D$ has leading terms $F_{2n-3} + F_{2n-5} + F_{2n-7}$.
    \end{enumerate}
\item  If $D$ has leading term $F_{2n-2}$, then either:
    \begin{enumerate}
    \item  $D$ has leading terms $F_{2n-2} + F_{2n-10}$,
    \item  $D = F_{2n-2} + F_{2n-9}$,
    \item  $D = F_{2n-2} + F_{2n-7}$,
    \item  $D = F_{2n-2} + F_{2n-5}$,
    \item  $D$ has leading terms $F_{2n-2} + F_{2n-5} + F_{2n-8}$ with next possible term $F_{2n-10}$,
    \item  $D$ has leading terms $F_{2n-2} + F_{2n-5} + F_{2n-7}$,
    \item  $D = F_{2n-2} + F_{2n-4}$, or
    \item  $D$ has leading terms $F_{2n-2} + F_{2n-4}$, followed by either $F_{2n-7}$ or a term smaller than $F_{2n-8}$.
    \end{enumerate}
\item  If $D$ has leading term $F_{2n-1}$, then either:
    \begin{enumerate}
    \item  $D$ has leading terms $F_{2n-1} + F_{2n-6}$, followed by either $F_{2n-9}$ or $F_{2n-10}$,
    \item  $D = F_{2n-1} + F_{2n-6}$, or
    \item  $D = F_{2n-1} + F_{2n-5}$.
    \end{enumerate}
\end{enumerate}
\end{corollary}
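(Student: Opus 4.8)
The plan is to deduce the corollary by collating Lemmas~\ref{Lem:FirstCase}--\ref{Lem:LastCase}, each of which computes the leading Zeckendorf terms of a sum of the form $D_1 + a$ with $D_1 \in 2\cA(G_n)$ and $a \in \cA(G_n)$. The first step is the reduction: any $D \in 3\cA(G_n)$ is such a sum, and if $D \notin 2\cA(G_n)$ then $a \neq 0$ and $D_1 \notin \cA(G_n)$, since $a = 0$ would give $D = D_1 \in 2\cA(G_n)$ and $D_1 \in \cA(G_n)$ would give $D = D_1 + a \in 2\cA(G_n)$. Thus, by Corollary~\ref{Cor:2A}, $D_1$ is one of: $F_{2n-3}$; $F_{2n-5}$; an element with leading term $F_{2n-6}$; or an element with leading terms $F_{2n-1} + F_{2n-3}$ or $F_{2n-1} + F_{2n-4}$ (with the finer structure recorded in that corollary). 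Likewise, by Corollary~\ref{Cor:A}, $a = F_kF_{2n-k}$ for some $1 \leq k \leq n$, so $a$ is $F_{2n-1}$, or $F_{2n-2}$, or $2F_{2n-3} = F_{2n-2} + F_{2n-5}$ (the case $k = 3$), or an element with leading terms $F_{2n-2} + F_{2n-6}$ (the cases $k \geq 4$).

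Next I would run through each admissible pair of shapes for $(D_1, a)$ and quote the relevant lemma. When $D_1 = F_{2n-3}$ or $D_1 = F_{2n-5}$, the lemmas describing $F_{2n-3} + D'$ and $F_{2n-5} + D'$ give the leading terms of $D$ directly, producing entries of cases (4) and (3) of the statement. When $D_1$ has leading term $F_{2n-6}$, Lemma~\ref{Lem:LastCase} does the same, again landing in cases (4) and (3). When $D_1$ has leading terms $F_{2n-1} + F_{2n-4}$, the case $a = F_{2n-1}$ is impossible because the short lemma preceding Lemma~\ref{Lem:FirstCase} then forces $D \in \cA(G_n) \subseteq 2\cA(G_n)$; for the remaining $a$ (all with leading term $F_{2n-2}$), Lemma~\ref{Lem:FirstCase} lists the five possibilities, which populate parts of cases (1) and (2). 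Finally, when $D_1$ has leading terms $F_{2n-1} + F_{2n-3}$, I would split on $a$ and invoke in turn the four lemmas treating $D + F_{2n-1}$ (for $a = F_{2n-1}$), $D + F_{2n-2}$ (for $a = F_{2n-2}$), $D + F_{2n-2} + F_{2n-5}$ (for $a = 2F_{2n-3}$), and $D + D'$ with $D'$ of leading terms $F_{2n-2} + F_{2n-6}$ (for $k \geq 4$); together these exhaust all $a \neq 0$, and their conclusions fill the remaining entries of cases (1)--(3).

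The last step is pure bookkeeping: check that the union of the leading-term shapes produced above is precisely the list in the statement, sorted by the leading term of $D$, and observe that no leading term below $F_{2n-4}$ ever occurs (the smallest output, from Lemma~\ref{Lem:FirstCase}, is $F_{2n-4}$, respectively $F_{2n-4} + F_{2n-6}$). The Zeckendorf arithmetic inside each individual lemma is routine --- one only tracks the top four or so terms, using repeatedly that a sum of two integers each below $F_j$ is below $F_{j+2}$ --- so no single computation is hard. The hard part will be the organization: there are many subcases, the individual lemmas state their ``next possible term'' qualifiers with varying precision, and one must make sure that all routes to a given leading pair are consistent with the tail conditions recorded in the corollary (for instance, that the various pairs $(D_1, a)$ yielding leading terms $F_{2n-1} + F_{2n-6}$ always produce a next term of $F_{2n-9}$ or $F_{2n-10}$, as claimed in case (4)(a)), and conversely that every shape listed in the statement is actually attained by some choice of $(D_1,a)$.
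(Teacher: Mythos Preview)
Your proposal is correct and follows essentially the same approach as the paper: the paper's proof is literally the sentence ``We summarize the results of Lemmas~\ref{Lem:FirstCase}--\ref{Lem:LastCase} as follows,'' and your plan is precisely to carry out that collation, with the added clarity of first making explicit the reduction that $D_1 \notin \cA(G_n)$ and $a \neq 0$. Your closing caveat about organizational bookkeeping is apt, and is in fact all the work there is.
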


\subsection{Proof of Theorem~\ref{Thm:MainThm}}

We now complete our proof of the gonality of $G_n$.

\begin{proof}[Proof of Theorem~\ref{Thm:MainThm}]
By Lemma~\ref{Lem:DivisorDegree5}, we have $\gon (G_n) \leq 5$.  By \cite{AtanasovRanganathan}, the Brill-Noether existence conjecture holds for all graphs of genus at most 5.  As a consequence, $\gon(G_n) \leq \lceil \frac{n}{2} \rceil$ for $n \leq 6$.  When $n=7$, we note that the divisor $2v_4 + 2v_5$ has positive rank, hence $\gon(G_7) \leq 4$.

For $n \leq 7$, it therefore suffices to show that $\gon (G_n) \geq \lceil \frac{n+1}{2} \rceil$.  If $n=0$ or $1$, then $G_n$ is a tree, and hence has gonality 1.  If $n=2$ or $3$, then then $G_n$ is not a tree, hence it has gonality at least 2.  If $n=4$ or $5$, then $\gon (G_n) \geq 3$ by Theorem~\ref{Thm:BoundOf3}, and if $n=6$ or $7$, then $\gon (G_n) \geq 4$ by Theorem~\ref{Thm:BoundOf4}.

For the rest of the proof, assume that $n \geq 8$.  We will show that there does not exist a $D \in 3\cA(G_n)$ such that $D-x \in 3\cA(G_n)$ for all $x \in A$.  It will then follow from Corollary~\ref{Cor:Gonality} that $\gon(G_n) \geq 5$ for $n \geq 8$.  As in the proofs of Theorem~\ref{Thm:BoundOf3} and Theorem~\ref{Thm:BoundOf4}, we shall proceed by examining the different possible leading terms of $D$.  Although there are many cases, nearly all are proven by straightforward calculations.

\begin{enumerate}

    \item Let $D$ have leading term $F_{2n-6}$. Then $D - 2F_{2n-3} = D + F_{2n-2} + F_{2n-4}$ has leading terms $F_{2n-2} + F_{2n-4} + F_{2n-6}$. By Corollary~\ref{Cor:3A} there is no such case, so $D - 2F_{2n-3} \notin 3\cA(G_n)$.

    \item Let $D$ have leading term $F_{2n-5}$.  By Corollary~\ref{Cor:3A}, $D$ must be in $2\cA(G_n)$, and by Corollary~\ref{Cor:2A}, we see that $D = F_{2n-5}$.  Note that $D - 3F_{2n-4} = F_{2n-1} + F_{2n-7}$.  By Corollary~\ref{Cor:3A}, no element in $3\cA(G_n)$ has such leading terms, so $D - 3F_{2n-4} \notin 3\cA(G_n)$.

    \item  Let $D$ have leading term $F_{2n-4}$.  We break this into subcases:

    \begin{enumerate}

        \item Let $D$ have leading terms $F_{2n-4} + F_{2n-6}$. Then $D - 2F_{2n-3}$ has leading terms $F_{2n-1} + F_{2n-5} + F_{2n-8}$. By Corollary~\ref{Cor:3A}, the only element in $3\cA(G_n)$ with leading terms $F_{2n-1} + F_{2n-5}$ is $F_{2n-1} + F_{2n-5}$ itself. As such, $D - 2F_{2n-3} \notin 3\cA(G_n)$.

        \item Let $D = F_{2n-4}$.  Then $D-5F_{2n-5}$ 
        has leading terms $F_{2n-1}+F_{2n-6}+F_{2n-8}$. By Corollary~\ref{Cor:3A}, we have that $D - 5F_{2n-5} \notin 3\cA(G_n)$.
        \end{enumerate}

    \item  Let $D$ has leading term $F_{2n-3}$.  We again break this into subcases.

    \begin{enumerate}

        \item Let $D$ have leading terms $F_{2n-3} + F_{2n-8}$.  Then $D - F_{2n-1}$ has leading terms $F_{2n-1} + F_{2n-8}$, which is not in $3\cA(G_n)$ by Corollary~\ref{Cor:3A}.

        \item Let $D$ have leading terms $F_{2n-3} + F_{2n-7}$.  Then $D - F_{2n-1}$ has leading terms $F_{2n-1} + F_{2n-7}$, which is not in $3\cA(G_n)$ by Corollary~\ref{Cor:3A}.

        \item Let $D$ have leading terms $F_{2n-3} + F_{2n-5}$.  Then $D-F_{2n-2}$ has leading terms $F_{2n-1} + F_{2n-3} + F_{2n-5}$.  By Corollary~\ref{Cor:3A}, if $D - F_{2n-2} \in 3\cA(G_n)$, we see that it must in fact be in $2\cA(G_n)$.  By Lemma~\ref{Lem:MostOf2A}, it follows that $D - F_{2n-2} = F_{2n-1} + F_{2n-3} + F_{2n-5}$.  In particular, $D = F_{2n-3} + F_{2n-5}$.  Then $D - 8F_{2n-6} = F_{2n-1} + F_{2n-3} + F_{2n-8} + F_{2n-11}$.  If this element is in $3\cA(G_n)$, then it is in $2\cA(G_n)$ by Corollary~\ref{Cor:3A}, but this is not the case by Lemma~\ref{Lem:Sp case 1}.

        \item Let $D$ have leading terms $F_{2n-3} + F_{2n-6}$.  We must consider further subcases.
        
            \begin{enumerate}
            \item  If $D = F_{2n-3} + F_{2n-6}$, then $D - 5F_{2n-5} = F_{2n-1} + F_{2n-4} + F_{2n-6} + F_{2n-8}$, which is not in $3\cA(G_n) \setminus 2\cA(G_n)$ by Corollary~\ref{Cor:3A}. Moreover, by Corollary~\ref{Cor:2A} since $F_{2n-8}>F_{2n-9}$, $D-5F_{2n-5} \notin 2\cA(G_n)$.
        
            \item  If $D$ has leading terms $F_{2n-3} + F_{2n-6} + F_{2n-8}$, then $D - F_{2n-2}$ has leading terms $F_{2n-1} + F_{2n-6} + F_{2n-8}$, which is not in $3\cA(G_n)$ by Corollary~\ref{Cor:3A}.
        
            \item  If $D$ has leading terms $F_{2n-3} + F_{2n-6} + F_{2n-9}$, then $D - 3F_{2n-4}$ has leading terms $F_{2n-1} + F_{2n-3} + F_{2n-9}$, which is not in $3\cA(G_n)$ by Corollary~\ref{Cor:3A}.

            \item If $D$ has leading terms $F_{2n-3}+F_{2n-6}+F_{2n-k}$ with $k\geq 10$. Then $D-5F_{2n-5}$ has leading terms $F_{2n-1}+F_{2n-4}+F_{2n-6}+F_{2n-8}+F_{2n-k}$. By the same argument as with $F_{2n-3}+F_{2n-6}$, $D-5F_{2n-5}\notin 3\cA(G_n).$
            \end{enumerate}

        \end{enumerate}

    \item  Let $D$ have leading term $F_{2n-2}$.

        \begin{enumerate}
        
        \item Let $D = F_{2n-2}$. Then $D - 5F_{2n-5} = F_{2n-1} + F_{2n-3} + F_{2n-6} + F_{2n-8}$, which is not in $3\cA(G_n) \setminus 2\cA(G_n)$ by Corollary~\ref{Cor:3A}. Moreover, by Corollary~\ref{Cor:2A}, $D-5F_{2n-5} \notin 2\cA(G_n)$. Altogether, $D-5F_{2n-5} \notin 3\cA(G_n)$.

        \item  Let $D$ have leading terms $F_{2n-2} + F_{2n-k}$ with $k \geq 7$.  By Corollary~\ref{Cor:3A}, this implies that $k \in \{ 7,8,9,10 \}$.  Then $D-8F_{2n-6}$ has leading terms $F_{2n-1} + F_{2n-3} + F_{2n-5}$.  If $D \neq F_{2n-2} + F_{2n-10}$, then there are additional terms.  By Corollary~\ref{Cor:3A}, the only element of $3\cA(G_n)$ with leading terms $F_{2n-1} + F_{2n-3} + F_{2n-5}$ is itself, so $D - 8F_{2n-6} \notin 3\cA(G_n)$.  If $D = F_{2n-2} + F_{2n-10}$, then $D - 3F_{2n-4} = F_{2n-1} + F_{2n-3} + F_{2n-5} + F_{2n-10}$.  Again, the only element of $3\cA(G_n)$ with leading terms $F_{2n-1} + F_{2n-3} + F_{2n-5}$ is itself, so $D - 3F_{2n-4} \notin 3\cA(G_n)$.

        \item Let $D$ have leading terms $F_{2n-2} + F_{2n-6}$.  Then by Corollaries~\ref{Cor:3A} and~\ref{Cor:2A}, we have $D \in \cA(G_n)$.  By Corollary~\ref{Cor:A}, either $D = F_{2n-2} + F_{2n-6}$, or the next term is $F_{2n-k}$ with $k \geq 9$.  We break this into further cases.
            
            \begin{enumerate}
            \item  If $D = F_{2n-2} + F_{2n-6}$, we have $D - 8F_{2n-6} = F_{2n-1} + F_{2n-3} + F_{2n-5} + F_{2n-7} + F_{2n-9}$, which is not in $3\cA(G_n)$ by Corollary~\ref{Cor:3A}.

            \item  If $k=9$, then $D - 8F_{2n-6}$ has leading term $F_{2n-0}$, hence $D-8F_{2n-6} \notin 3\cA(G_n)$.

            \item  If $k=10$, then either $D = F_{2n-2} + F_{2n-6} + F_{2n-10}$, or there are more terms.  If there are more terms, then the leading term of $D - 8F_{2n-6}$ is smaller than $F_{2n-6}$, so $D-8F_{2n-6} \notin 3\cA(G_n)$.  If $D = F_{2n-2} + F_{2n-6} + F_{2n-10}$, then $D - 2F_{2n-3} =  F_{2n-1} + F_{2n-3} + F_{2n-5} + F_{2n-8} + F_{2n-10}$, which is not in $3\cA(G_n)$ by Corollary~\ref{Cor:3A}.

            \item  If $k > 10$, then the leading terms of $D-8F_{2n-6}$ are $F_{2n-1} + F_{2n-3} + F_{2n-5} + F_{2n-7} + F_{2n-9} + F_{2n-k}$, so $D-8F_{2n-6} \notin 3\cA(G_n)$ by Corollary~\ref{Cor:3A}.
            
            \end{enumerate}

        \item Let $D$ have leading terms $F_{2n-2} + F_{2n-5} + F_{2n-k}$ with $k \geq 7$. Then $D -2F_{2n-3}$ has leading term $F_{2n-k}$. By Corollary~\ref{Cor:3A}, the smallest leading term possible for an element in $3\cA(G_n)$ is $F_{2n-6}$, so $D-2F_{2n-3} \notin 3\cA(G_n)$
    
        \item  Let $D = F_{2n-2} + F_{2n-5}$.  Then $D-8F_{2n-6} = F_{2n-8} + F_{2n-11}$.  Again, the smallest leading term possible for an element in $3\cA(G_n)$ is $F_{2n-6}$, so $D-2F_{2n-3} \notin 3\cA(G_n)$

        \item  Let $D$ have leading terms $F_{2n-2} + F_{2n-4}$.  By Corollary~\ref{Cor:3A}, if there is another term, then it is either $F_{2n-7}$ or smaller than $F_{2n-8}$. If $D$ has no other term, then $D - 5F_{2n-5}$ has leading terms $F_{2n-6} + F_{2n-8}$, which is not in $3\cA(G_n) \setminus \cA(G_n)$ by Corollary~\ref{Cor:3A}. Moreover, by Corollary~\ref{Cor:2A}, $D-5F_{2n-5} \notin \cA(G_n)$.
        
        If the next term is of the form $F_{2n-k}$ with $k\geq 7$, then $D-F_{2n-2}$ has leading terms $F_{2n-4} + F_{2n-k}$. Hence, $D-F_{2n-2} \notin 3\cA(G_n)$ by Corollary~\ref{Cor:3A}.

        \end{enumerate}

    \item  Let $D$ have leading term $F_{2n-1}$.

        \begin{enumerate}

        \item Let $D$ have leading terms $F_{2n-1} + F_{2n-6}$.  If there are no other terms, then $D - 8F_{2n-6} = F_{2n-4} + F_{2n-6} + F_{2n-8} + F_{2n-11}$, which is not in $3\cA(G_n)$ by Corollary~\ref{Cor:3A}.  Otherwise, the next term is either $F_{2n-9}$ or $F_{2n-10}$.  Then $D - 3F_{2n-4}$ has leading term $F_{2n-3}$, followed by either $F_{2n-9}$ or $F_{2n-10}$.  In either case, by Corollary~\ref{Cor:3A}, we see that $D - 3F_{2n-4} \notin 3\cA(G_n).$

        \item Let $D = F_{2n-1}$.  Then $D - 3F_{2n-4} = F_{2n-4} + F_{2n-7}$, which is not in $3\cA(G_n)$ by Corollary~\ref{Cor:3A}.

        \item Let $D = F_{2n-1} + F_{2n-5}$.  Then $D - 8F_{2n-6} = F_{2n-3} + F_{2n-8} + F_{2n-11}$, which is not in $3\cA(G_n)$ by Corollary~\ref{Cor:3A}.

        \item Let $D$ have leading terms $F_{2n-1} + F_{2n-3} + F_{2n-8}$.  Then $D - 2F_{2n-3}$ has leading terms $F_{2n-2} + F_{2n-8}$, which is not in $3\cA(G_n)$ by Corollary~\ref{Cor:3A}.

        \item Let $D$ have leading terms $F_{2n-1} + F_{2n-3} + F_{2n-7}$.  Then $D - 2F_{2n-3}$ has leading terms $F_{2n-2} + F_{2n-7}$. By Corollary~\ref{Cor:3A}, the only element in $3\cA(G_n)$ with these leading terms is $F_{2n-2} + F_{2n-7}$ itself. Hence, if there are more terms, then $D - 2F_{2n-3} \notin 3\cA(G_n)$.  Otherwise, if $D = F_{2n-1} + F_{2n-3} + F_{2n-7}$, then $D - 8F_{2n-6} = F_{2n-2} + F_{2n-6} + F_{2n-11}$, which is not in $3\cA(G_n)$ by Corollary~\ref{Cor:3A}.

        \item Let $D$ have leading terms $F_{2n-1} + F_{2n-3} + F_{2n-6}$. If $D = F_{2n-1} + F_{2n-3} + F_{2n-6}$, then $D - 8F_{2n-6} = F_{2n-2} + F_{2n-6} + F_{2n-8} + F_{2n-11}$, which is not in $3\cA(G_n)$ by Corollary~\ref{Cor:3A}.  Otherwise, the next term is either $F_{2n-9}$ or $F_{2n-10}$.  Then $D - 3F_{2n-4}$ has leading terms $F_{2n-2} + F_{2n-5}$, followed by either $F_{2n-9}$ or $F_{2n-10}$. By Corollary~\ref{Cor:3A}, this is not in $3\cA(G_n)$.

        \item  Let $D$ have leading terms $F_{2n-1} + F_{2n-3} + F_{2n-5}$.  Then $D = F_{2n-1} + F_{2n-3} + F_{2n-5}$ and $D - 8F_{2n-6} = F_{2n-2} + F_{2n-5} + F_{2n-8}+F_{2n-11}$. Again, by Corollary~\ref{Cor:3A}, this is not an element of $3\cA(G_n)$.

        \item  Finally, let $D$ have leading terms $F_{2n-1}+F_{2n-4}$.  If there is a next term, then it is $F_{2n-6}$.  We again consider further subcases.

            \begin{enumerate}
        
            \item  If $D$ has leading terms $F_{2n-1} + F_{2n-4} + F_{2n-6} + F_{2n-9}$, then $D - 8F_{2n-6}$ has leading term $F_{2n-2} + F_{2n-11}$.  Hence $D - 8F_{2n-6} \notin 3\cA(G_n).$
        
            \item  If $D$ has leading terms $F_{2n-1} + F_{2n-4} + F_{2n-6} + F_{2n-10}$, then $D - 8F_{2n-6}$ has leading term $F_{2n-2}$. If there is a next term, then $D - 8F_{2n-6}$ has leading terms $F_{2n-2} + F_{2n-k}$ with $k \geq 12$, which is not in $3\cA(G_n)$ by Corollary~\ref{Cor:3A}. \\

            The cases above reduce the problem to three specific divisors $D$.  Specifically, $D = F_{2n-1} + F_{2n-4}$, $D = F_{2n-1} + F_{2n-4} + F_{2n-6}$, and $D = F_{2n-1} + F_{2n-4} + F_{2n-6} + F_{2n-10}$.  We will use Dhar's burning algorithm starting at $v_8$ to show that the $v_8$-reduced divisors equivalent to these do not contain $v_8$ in their support.  As a consequence, none of these divisors have positive rank.  Equivalently, this shows that $D - 21F_{2n-8} \notin 3\cA(G_n)$ for these three divisors $D$.

            \item Let $D= F_{2n-1} + F_{2n-4}$.  Identifying $\Jac (G_n)$ with $\Z/F_{2n-2}\Z$ by the isomorphism $\varphi$, we have $D = 2v_0 + 2v_2$.  We shall run Dhar's burning algorithm starting at the vertex $v_8$.  Initially, everything burns except for $v_0$.  As such, we fire $v_0$ to obtain $v_1 + 3v_2$.  At the next step, everything burns except $v_0, v_1$ and $v_2$. As such, we fire these vertices to obtain $v_2 + 2v_3 + v_4$.  At this point, everything burns except for $v_0, v_1, v_2$ and $v_3$. Firing these vertices, we obtain $3v_4 + v_5$, at which point the entire graph burns.  It follows that $3v_4 + v_5$ is $v_8$-reduced.  Since it does not contain $v_8$ in its support, the divisor $D$ does not have positive rank.

            \item Let $D= F_{2n-1} + F_{2n-4} + F_{2n-6}$.  Equivalently, $D = 2v_0 + v_2 + v_4$.  We again run Dhar's burning algorithm starting at $v_8$.  Again, initially, everything burns except for $v_0$. As such, we fire $v_0$ to obtain $v_1 + 2v_2 + v_4$.  At the next step, everything burns except $v_0, v_1$ and $v_2$. Firing these vertices, we obtain  $2v_3 + 2v_4$.  Now, everything burns except for $v_0, v_1, v_2, v_3$ and $v_4$. Firing these vertices, we obtain $v_3 + 2v_5 + v_6$, at which point the entire graph burns.
       
            \item Let $D= F_{2n-1} + F_{2n-4} + F_{2n-6} + F_{2n-10}$.  Equivalently, $D = 2v_0 + v_2 + v_6$.  We once again run Dhar's burning algorithm starting at $v_8$.  Initially, everything burns except for $v_0$.  As such, we fire $v_0$ to obtain $v_1 + 2v_2 + v_6$.  At the next step, everything burns except $v_0, v_1$ and $v_2$. As such, we fire $v_0, v_1$ and $v_2$ to obtain $2v_3 + v_4 + v_6$, at which point the entire graph burns.
        \end{enumerate}

        \end{enumerate}

\end{enumerate}

\end{proof}

\begin{proof}[Proof of Theorem~\ref{Thm:StripGonality}]
By Theorem~\ref{Thm:MainThm}, $\gon (G_n) = \min \{ \lceil \frac{n}{2} \rceil , 5 \}$, and by Corollary~\ref{Cor:Gonality}, we have
\[
\gon(G_n) = \min \{ d \mid \exists x \in \Z/F_{2n}\Z \text{ such that } x-\cA(G_n) \subseteq (d-1)\cA(G_n) \} .
\]
Finally, by Corollary~\ref{Cor:SetA}, we have $x - \cA(G_n) \subseteq (d-1)\cA(G_n)$ if and only if $x$ satisfies the condition in the statement of the theorem.
\end{proof}

\nocite{*}
\bibliography{references}
\bibliographystyle{plain}

\end{document}